\documentclass[11pt,a4paper]{article}
\pdfoutput=1

\usepackage{lmodern}
\usepackage[normalem]{ulem}
\usepackage{titlesec}
\titleformat{\section}
  {\normalfont\fontsize{18pt}{10pt}\selectfont\scshape}
  {\thesection}
  {0.7em}
  {}
\titleformat{\subsection}
  {\normalfont\fontsize{14pt}{8pt}\selectfont\scshape}
  {\thesubsection}
  {0.6em}
  {}

\usepackage{abraces}
\usepackage[utf8]{inputenc}
\usepackage{amssymb}
\usepackage{amsmath}
\usepackage{amsfonts}
\usepackage{bbm}
\usepackage{amsthm}
\usepackage{mathrsfs}
\usepackage{hyperref}
\usepackage{color}
\usepackage[margin=2.5cm]{geometry}
\usepackage[all,cmtip]{xy}
\usepackage{graphicx}
\usepackage{float}
\usepackage{varwidth}
\usepackage{comment}
\usepackage{enumitem}
\usepackage{cancel}

\usepackage{bm}
\usepackage{mathtools}

\usepackage{svg}

\usepackage{upgreek}
\usepackage{rotating}

\usepackage{tikz}
\usetikzlibrary{shapes.geometric}

\usepackage{tikz}
\usetikzlibrary{cd,nfold,matrix,arrows,calc,decorations.pathmorphing,fit,shapes.geometric,decorations.pathreplacing,positioning}
\tikzset{Rightarrow/.style={double equal sign distance,>={Implies},->},
triple/.style={-,preaction={draw,Rightarrow}},
quadruple/.style={preaction={draw,Rightarrow,shorten >=0pt},shorten >=1pt,-,double,double
distance=0.2pt}}

\usepackage{quiver}

\mathchardef\mhyphen="2D

\definecolor{darkred}{rgb}{0.8,0.1,0.1}
\hypersetup{
	colorlinks=true,         
	linkcolor=darkred,
	citecolor=blue,
}

\newtheoremstyle{scstyle}%
  {0.4em}   
  {0.4em}   
  {}      
  {}      
  {\fontsize{12pt}{8pt}\selectfont\scshape} 
  {.}     
  {.5em}  
  {}      

\theoremstyle{scstyle}
\newtheorem{theo}{Theorem}[section]
\newtheorem{lem}[theo]{Lemma}
\newtheorem{propo}[theo]{Proposition}
\newtheorem{cor}[theo]{Corollary}

\theoremstyle{scstyle}
\newtheorem{defi}[theo]{Definition}

\newtheorem{exx}[theo]{Example}
\newenvironment{ex}
{\pushQED{\qed}\exx}
{\popQED\endexx}

\newtheorem{remm}[theo]{Remark}
\newenvironment{rem}
{\pushQED{\qed}\remm}
{\popQED\endremm}

\newtheorem{constrr}[theo]{Construction}
\newenvironment{constr}
{\pushQED{\qed}\constrr}
{\popQED\endconstrr}

\numberwithin{equation}{section}

\def\colim{\mathrm{colim}}
\def\nn{\nonumber}

\def\S{\mathrm{S}}
\def\rev{\mathrm{rev}}

\def\s{\rightsquigarrow}
\def\i{\tilde{i}}
\def\j{\tilde{j}}
\def\k{\tilde{k}}
\def\l{\tilde{l}}
\def\m{\tilde{m}}

\def\p{\tilde{p}}
\def\q{\tilde{q}}

\def\bbK{\mathbb{K}}

\def\bbN{\mathbb{N}}
\def\bbZ{\mathbb{Z}}

\def\g{\mathfrak{g}}

\def\KZ{\mathrm{KZ}}

\def\hom{\underline{\mathrm{hom}}}
\def\End{\mathrm{End}}
\def\Der{\mathrm{Der}}

\def\Imm{\mathrm{Im}}

\def\Sym{\mathrm{Sym}}

\def\id{\mathrm{id}}
\def\Id{\mathrm{Id}}

\def\D{\mathrm{d}}

\def\dR{\mathrm{dR}}

\def\dim{\mathrm{dim}}

\def\oone{\mathbbm{1}}
\def\op{\mathrm{op}}

\def\Vec{\mathsf{Vec}}

\def\Ho{\mathsf{Ho}}
\def\Ch{\mathsf{Ch}}
\def\dgMod{\mathsf{dgMod}}
\def\Rep{\mathsf{Rep}(\g)}

\def\C{\mathsf{C}}
\def\D{\mathsf{D}}

\def\K{\mathsf{K}}

\def\dgCat{\mathsf{dgCat}}

\def\ad{\mathrm{ad}}

\def\sf{\mathrm{sf}}

\def\CC{\mathcal{C}}
\def\DD{\mathcal{D}}

\def\Pol{\mathrm{Pol}}
\def\CE{\mathrm{CE}}

\newcommand\numberthis{\addtocounter{equation}{1}\tag{\theequation}}

\newcommand{\pb}[2]{[\![#1,#2]\!]}

\DeclareMathOperator*{\Mwedge}{\text{\raisebox{0.25ex}{\scalebox{0.7}{${\bigwedge}_{}$}}}}

\DeclareMathOperator*{\smallbox}{\text{\raisebox{0.15ex}{\scalebox{0.7}{$\boxtimes$}}}}
\DeclareMathOperator*{\cmp}{\text{\raisebox{0.2ex}{\scalebox{0.7}{$\otimes$}}}}

\makeatletter
\newcommand{\xRrightarrow}[2][]{\ext@arrow 0359\Rrightarrowfill@{#1}{#2}}
\newcommand{\Rrightarrowfill@}{\arrowfill@\equiv\equiv\Rrightarrow}
\newcommand{\xLleftarrow}[2][]{\ext@arrow 3095\Lleftarrowfill@{#1}{#2}}
\newcommand{\Lleftarrowfill@}{\arrowfill@\Lleftarrow\equiv\equiv}
\makeatother

\def\sk{\vspace{2mm}}

\makeatletter
\let\@fnsymbol\@alph
\makeatother

\title{%
Homotopy Lie algebras and coherent infinitesimal 2-braidings}

\author{%
Cameron Kemp\vspace{4mm}\\
{\small School of Mathematical Sciences, University of Nottingham,}\\
{\small University Park, Nottingham NG7 2RD, United Kingdom.}\vspace{4mm}\\
{\small \begin{tabular}{ll}
Email: & \href{mailto:cameron.kemp@nottingham.ac.uk}{\texttt{cameron.kemp@nottingham.ac.uk}}
\vspace{2mm}
\end{tabular}
}
}

\begin{document}

\maketitle

\begin{abstract}
\noindent Given a homotopy Lie algebra (i.e. an $L_\infty$-algebra) $\g$, we show concretely how the Lada-Markl $\g$-modules (i.e. representations) assemble into a symmetric monoidal dg-category. Considering the homotopy 2-category of that dg-category, we construct infinitesimal 2-braidings from 2-shifted Poisson structures then show that such infinitesimal 2-braidings are coherent in Cirio and Faria Martins' sense. We then explicitly determine the differential of the Chevalley-Eilenberg algebra associated with a finite-dimensional homotopy Lie algebra and construct the symmetric monoidal dg-equivalence between the category of representations and the category of semi-free dg-modules over the Chevalley-Eilenberg algebra.
\end{abstract}
\vspace{-3mm}

\paragraph*{Keywords:}$L_\infty$-algebra, $L_\infty$-module, dg-category, shifted Poisson structure, braided monoidal 2-category, infinitesimal 2-braiding, derived algebraic geometry, deformation quantisation 
\vspace{-2mm}

\paragraph*{MSC 2020:} 17B10, 17B37, 18G35, 18N10
\vspace{-2mm}


\tableofcontents
\newpage
\subsection*{Conventions and notations}
\begin{itemize}
\item All vector spaces, algebras, etc., will be over a fixed field $\bbK$ of characteristic $0$.
\item Every graded vector space will be graded by the integers $\bbZ$. Every differential structure on such a graded vector space will increase the degree by $1$, i.e. we use cochain conventions. 
\item For $i\in\bbZ$, we define $\i:=i-1$. For vector spaces $\g$, $U$ and $V$, we will denote identity maps as, e.g. $\id_{\g^{\otimes(\i-1)}\otimes\,U\otimes V}=:1_{\i-1,UV}$ and swaps as, e.g. $s_{\g^{\otimes(\i-1)},U}=:s_{\i-1,U}$ for space reasons.
\item For a mapping/equation, input elements will be assumed to be of homogeneous degree.
\item CDGA: commutative differential graded algebra, DGLA: differential graded Lie algebra.
\item Every monoidal category of structured sets is assumed to be strict\footnote{Should the reader be uneasy with this convention then rest be assured that the only associators and unitors that would crop up anyway would be the canonical ones from the category of vector spaces. This data is not only cumbersome to carry around but also inconsequential from the standpoint of our overarching story.} under Schauenburg's refined strictification theorem \cite[Theorem 4.3]{Schauenburg}.
\item We will work with many monoidal categories in this paper hence we list them here:
\begin{enumerate}
\item $(\Ch,\otimes,\bbK)$: cochain complexes, the tensor product and the ground field in degree 0 with trivial differential.
\item $(\dgCat,\cmp,\K)$: $\Ch$-categories, the local tensor product and the $\Ch$-category with a single object and hom given by $\bbK$.
\item $(\Ch^{[-1,0]},\boxtimes,\bbK)$: cochain complexes concentrated in degrees $\{-1,0\}$ and the truncated tensor product.
\item $(\dgCat^{[-1,0]},\smallbox,\K)$: $\Ch^{[-1,0]}$-categories and the local truncated tensor product.
\item $(\Rep,\odot,\bbK)$: representations of a homotopy Lie algebra $\g$, the odot product and the ground field in degree 0 with the trivial structure of representation.
\item $({}_A\dgMod,\otimes_A,A)$: dg-modules over a CDGA $A$, the relative tensor product and $A$ regarded as a dg-module over itself.
\end{enumerate}
\item Boldface is used to highlight the term being defined by surrounding text whereas italics are used for emphasis. For example, a \textbf{cochain 2-category} is a $\Ch^{[-1,0]}$-category and infinitesimal 2-braidings are defined to be \textit{pseudo}natural rather than 2-natural.
\end{itemize}

\subsection*{Acknowledgements}
First and foremost, gratitude goes to my senpais, Alexander Schenkel and Robert Laugwitz, for many fruitful insights and enlightening discussions over the 2-year course of this paper's formation. Not only would the content of Section \ref{sec:prelims} be largely absent without their attentive mindfulness but Section \ref{sec:weak reps} would be utterly hopeless and Section \ref{sec:geometric model} would be wrong in the most frustratingly subtle ways. 
\sk

I would like to also thank João Faria Martins for many conversations over the past year during which he raised several interesting questions, particularly the potential relationship between a deformation quantisation of the symmetric monoidal structure on the dg-category $\Rep$ and construction of a higher-dimensional analogue of topological invariants such as Kontsevich's Vassiliev invariant.

\newpage
\section{Introduction}
The history of $L_\infty$-algebras and their genesis is a highly intricate story, as detailed in Stasheff's retrospective analysis \cite[Section 4]{Sta19}. Technically, their concept made its first appearance in \cite{SS85} though their modern homological formulation in terms of a family of brackets on a graded vector space was only later introduced in \cite{Sta92,LS93}. The homological formulation makes it evident that $L_\infty$-algebras are a natural generalisation of DGLAs; this should not be too surprising given the reputation of DGLAs in deformation theory together with the fact that such a formulation was motivated by, and intended as an abstraction from, examples arising in BRST quantisation in closed string field theory due to Witten and Zwiebach \cite{WZ92,Zwi93}. To this end, the connection between $L_\infty$-algebras and the more general BV formalism \cite{BV,BF} was analysed later by Stasheff \cite{Sta97,Sta97'} and others \cite{Jurco}.
\sk

Shortly after the development of the homological formulation, it was recognised by Lada and Markl \cite[Theorem 2.3]{LM95} that finite-dimensional $L_\infty$-algebras are in one-to-one correspondence with augmented finitely generated semi-free CDGAs (Chevalley-Eilenberg algebras). A major payoff of this correspondence is that it allows one to naturally define homomorphisms between $L_\infty$-algebras via the established (and intuitively simple) definition of a homomorphism between CDGAs. Taking it further, one can then define a representation of an $L_\infty$-algebra $\g$ on a cochain complex $V$ as an $L_\infty$-algebra homomorphism from $\g$ to the DGLA $\End(V)$. In the classical theory of Lie algebras, a representation $\rho:\g\to\End(V)$ is equivalently described by an action $\rhd:\g\otimes V\to V$ subject to the condition that action by the commutator is equal to the commutator of the action. Ergo, in this $L_\infty$-algebra context, one anticipates that a representation is equivalently described by an infinite tower of higher and higher action maps subject to a homotopy coherence condition. In fact, Lada and Markl started from this latter description \cite[Definition 5.1]{LM95} then independently introduced the definition of a homomorphism from an $L_\infty$-algebra to a DGLA \cite[Definition 5.2]{LM95}; their main result \cite[Theorem 5.4]{LM95} was that these two definitions indeed coincide.
\sk

Recall the classical result that if $\g$ is a Lie algebra and $V$ is a representation then $\g\oplus V$ inherits a canonical Lie algebra structure; this result was generalised to $L_\infty$-algebras and their representations in Lada's short note \cite{Lada}. As stated in the introduction, that short note was ``\textit{motivated by a problem in mathematical physics originally encountered in \cite{BBvD} and addressed in \cite{FLS}. In classical gauge field
theory, one encounters representations of Lie algebras in the guise of the Lie algebra of gauge parameters acting on the Lie module of fields for the theory. However,
as in \cite{BBvD}, these Lie structures occasionally appear up to homotopy}." Now recall another classical result: given a Lie algebra $\g$, two representations $V,W$ and a homomorphism $f:V\to W$, one can define a Lie algebra homomorphism $\id_\g\oplus f:\g\oplus V\to\g\oplus W$. With a view to generalising this simple result to the $L_\infty$-algebra context, Allocca \cite{thesis,Allocca} defined homomorphisms of $L_\infty$-algebra representations then showed that the analogous result indeed holds.
\sk

To the best of our knowledge, composition of Allocca's homomorphisms has not been defined in the literature nor has their monoidal product. In fact, what is really desired is:
\begin{enumerate}
\item[(i)] A cochain complex of morphisms between any two representations such that Allocca's homomorphisms are 0-cocycles with respect to the hom differential.
\item[(ii)] An associative composition which is a cochain map between hom-complexes and admits units that are 0-cocycles.
\item[(iii)] A compatible symmetric monoidal structure on this dg-category $\Rep$. 
\end{enumerate}
This desideratum is deeply rooted in our long-term ambition of constructing `higher quantum groups', as explored in \cite{KLS25,KLS24,Kemp25,Kemp25'}. Let us now recap that work so as to clearly reveal the necessity of the current paper.
\sk

It was shown in \cite{KLS25} how a 2-shifted Poisson structure $\pi$ on a semi-free CDGA $A$ induces an infinitesimal 2-braiding $t^\pi$ on the homotopy 2-category of the symmetric monoidal dg-category of semi-free $A$-dg-modules, $\Ho_2\big({}_A\dgMod^\sf\big)$. This infinitesimal 2-braiding $t^\pi$ was integrated \cite[Subsection 3.3]{KLS25} to second order in the deformation parameter $\hbar$ by choosing the ansatz braiding as $\sigma:=\gamma\,e^{\frac{\hbar}{2}t^\pi}$ and the ansatz associator as $\alpha:=\Phi_\KZ(t^\pi_{12},t^\pi_{23})$, where $\gamma$ is the symmetric braiding on ${}_A\dgMod^\sf$ and $\Phi_\KZ$ is Drinfeld's Knizhnik-Zamolodchikov series \cite[Corollary XIX.6.5]{Kassel}. It was shown that, at that order, the pentagon axiom was satisfied on the nose yet the hexagon axioms were obstructed by modifications. This investigation was followed up in \cite[Section 5]{Kemp25} where it was shown that the modifications satisfy the requisite axioms of a braided monoidal 2-category if the infinitesimal 2-braiding is coherent and totally symmetric, in Cirio and Faria Martins' sense \cite[Definitions 17 and 18, respectively]{Joao}.
\sk

Cirio and Faria Martins introduced such definitions surrounding an infinitesimal 2-braiding so that they could build from it a KZ 2-connection with nice geometric properties, as developed in their earlier work \cite{Joao1,Joao2}. It was shown recently \cite{Kemp25'} that one can lift the above-mentioned obstructions to all orders (i.e. construct a pair of hexagonator series) by studying the higher-categorical parallel transport of Cirio and Faria Martins' KZ 2-connection. By assuming again that the infinitesimal 2-braiding is coherent and totally symmetric, it was demonstrated that the braiding, associator and pair of hexagonator series together satisfy the only axiom of a braided monoidal 2-category which does not include the pentagonator, i.e. the Breen polytope axiom \cite[(2.40)]{Kemp25}. The condition of total symmetry was easy to show for our concrete infinitesimal 2-braiding $t^\pi$ whereas we could only conjecture \cite[Remark 5.28]{Kemp25} that $t^\pi$ being coherent relates to the third-weight component of the Maurer-Cartan equation that the shifted Poisson structure $\pi$ must satisfy \cite[Remark 2.7]{KLS25}. Let us expand on this latter point and bring light to subtleties that have been glossed over in the story above.   
\sk

Given a CDGA $A=(A,\delta)$, the derivations $\Der(A)\subseteq\hom(A,A)$ are canonically equipped with the structure of a DGLA where the Lie bracket is the graded commutator $[\cdot,\cdot]$ and the differential is the internal hom differential $\partial:=[\delta,\cdot\,]$. Given $n\in\bbZ$, the CDGA of completed $n$-shifted polyvectors is defined by $\widehat{\Pol}(A,n):=\prod_{w\in\bbN}\Sym^w_A\big(\Der(A)[-n-1]\big)$, where $w\in\bbN$ is called the \textbf{weight}. There is a canonical Poisson $(n+2)$-algebra
structure on $\widehat{\Pol}(A,n)$, i.e. the Schouten-Nijenhuis bracket $\{\cdot,\cdot\}$, with respect to which one defines the \textbf{Maurer-Cartan equation} for a completed $n$-shifted polyvector $P\in\widehat{\Pol}(A,n)$ as
\begin{align}\label{eq:MC eqn in Pol}
\partial(P) + \tfrac{1}{2}\,\{P,P\}\,=\,0\quad.
\end{align}
Following \cite{CPTVV,Pridham,PridhamOutline}, we define an \textbf{$n$-shifted Poisson structure} on the CDGA $A$ as a completed $n$-shifted polyvector $\pi$ of cohomological degree $n+2$ and weight $w\geq2$ which satisfies the Maurer-Cartan equation \eqref{eq:MC eqn in Pol}. Expanding the $n$-shifted Poisson structure into its weight components as $\pi:=\sum_{w=2}^\infty\pi_w\in\widehat{\Pol}(A,n)^{n+2}$ where $\pi_w\in\Sym^w_A\big(\Der(A)[-n-1]\big)^{n+2}$, we decompose the Maurer-Cartan equation \eqref{eq:MC eqn in Pol} into such weight components as:
\begin{alignat}{6}
\partial(\pi_2)\,=&\,\,0\quad&&,\label{eq:2nd weight MC in Pol}\\
\partial(\pi_3)+\tfrac{1}{2}\,\{\pi_2,\pi_2\}\,=&~0&&,\label{eq:3rd weight MC in Pol}\\
\vdots\nn
\end{alignat}
Given a semi-free CDGA $A$ and a 2-shifted Poisson structure $\pi\in\widehat{\Pol}(A,2)^4$, the infinitesimal 2-braiding $t:\otimes_A\Rightarrow\otimes_A$ of \cite[Theorem 3.10]{KLS25} was constructed from \textit{only} the biderivation component $\pi_2\in\Sym^2_A\big(\Der(A)[-3]\big)^4\cong\Sym^2_A\big(\Der(A)[-1]\big)^0$ and the proof of that theorem made use of \textit{only} the weight 2 component of the Maurer-Cartan equation \eqref{eq:2nd weight MC in Pol}. For this reason, it would have been clearer to have called that paper ``Infinitesimal 2-braidings from \textit{the biderivation component} of a 2-shifted Poisson structure" and to have denoted the induced infinitesimal 2-braiding as $t^{\pi_2}$. The intuition behind linking the weight 3 component of the Maurer-Cartan equation \eqref{eq:3rd weight MC in Pol} to coherency of $t^{\pi_2}$ can be understood by first recalling, from the classical context, Cartan's formula \cite[Proposition
3.6]{Poisson}. Recall that this formula relates the internal product action of multiderivations on differential forms $\iota$ to: the classical Schouten-Nijenhuis bracket $[\cdot,\cdot]_\mathrm{SN}$, the de Rham differential $d_{\dR}$ and the graded commutator bracket $[\cdot,\cdot]$, specifically as
\begin{equation}\label{eq:Cartan's formula}
\big[\iota_P,[d_{\dR},\iota_Q]\big]=\iota_{[P,Q]_\mathrm{SN}}\quad.
\end{equation}
If one could generalise Cartan's formula \eqref{eq:Cartan's formula} to the CDGA context, then the coherency condition of $t^{\pi_2}\sim\iota_{\pi_2}(\,\cdots)$ would boil down to the requirement that 
\begin{equation}\label{eq:rough condition of t being coherent}
t^{\pi_2}_{12}+t^{\pi_2}_{23}+t^{\pi_2}_{13}\sim\big[\iota_{\pi_2},[d_{\dR},\iota_{\pi_2}]\big]\big(\cdots\big)=0\quad.
\end{equation}
The triderivation component $\pi_3\in\Sym^3_A\big(\Der(A)[-3]\big)^4\cong\Sym^3_A\big(\Der(A)[-1]\big)^{-2}$ induces a degree $-2$ morphism $t^{\pi_3}\sim\iota_{\pi_3}(\,\cdots)$ in ${}_A\dgMod^\sf$ thus
\begin{equation}
d_{\hom}(t^{\pi_3})\sim\iota_{\partial(\pi_3)}(\,\cdots)
\end{equation}
vanishes in the homotopy 2-category $\Ho_2\big({}_A\dgMod^\sf\big)$ and the coherence condition \eqref{eq:rough condition of t being coherent} is indeed satisfied because of the weight 3 component of the Maurer-Cartan equation \eqref{eq:3rd weight MC in Pol} together with the assumed given CDGA generalisation of Cartan's formula \eqref{eq:Cartan's formula}.
\sk

Rather than attempting to prove that a CDGA generalisation of Cartan's formula \eqref{eq:Cartan's formula} holds, the present paper offers an alternative route to constructing a coherent (and totally symmetric) infinitesimal 2-braiding from a 2-shifted Poisson structure by explicitly defining the symmetric monoidal dg-category $\Rep$ via the graphical calculus formalism developed in \cite{KLS24}. That formalism was developed to characterise the shifted Poisson structures on a generic semi-free CDGA in terms of a family of homotopically-coherent maps adjoined to the underlying (curved) $L_\infty$-algebra. The inspiration was Safronov's result \cite{Safronov} that, in the case of an ordinary Lie algebra $\g$, the 1- and 2-shifted Poisson structures are given by quasi-Lie bialgebra structures and invariant symmetric tensors $\pi_2\in(\Sym^2\g)^{\ad\,\g}$, respectively, i.e. the semiclassical data associated with Drinfeld-Jimbo quantum groups.
\sk

In brief, the graphical calculus formalism of \cite{KLS24} works as follows: given a finitely generated semi-free CDGA $A=\big(\Sym(\g^*[-1]),\delta\big)$, the $n$-shifted polyvectors of weight $w\in\bbN$ decompose into arity $p\in\bbN$ components which can be rewritten as $P:\bigwedge^p\g\to(\Sym_\pm^w\g)\big[|P|_{\Pol}-p-wn\big]$, where $\Sym_\pm$ is symmetric/alternating if $n$ is even/odd. The Schouten-Nijenhuis bracket $\{\cdot,\cdot\}$ translates into a commutator of a bullet product composition operation \eqref{eq:def of Schouten by bullet}. In particular, the differential $\delta$ is itself an $n$-shifted polyvector of weight 1 and cohomological degree $n+2$ which satisfies the Maurer-Cartan equation \eqref{eq:MC eqn in Pol}, i.e. $\delta\circ\delta=0$. This then corresponds to a family of maps $\{\ell^p:\bigwedge^p\g\to\g[2-p]\}_{p\in\bbN}$ subject to the curved Jacobi identity, i.e. we recover the well-known one-to-one correspondence. Not only does this bullet product allow one to abstract the definition of an $n$-shifted Poisson structure in an $L_\infty$-algebra $\g$ of arbitrary dimension, but it also allows one to naturally define the structure of a representation of $\g$ together with the dg-category structure of $\Rep$, see Remark \ref{rem:comparison to Allocca}.

\newpage
\section{Preliminaries}\label{sec:prelims}
The aim of these preliminaries is to make this paper self-contained by fixing notation and basic terms. Subsection \ref{sub:symmetric monoidal dg-cats} begins by recalling some elementary concepts of homological algebra needed to describe the symmetric monoidal category of cochain complexes, see \cite[Chapter 1]{Weibel}. We then apply these concepts to the context of enriched category theory (i.e. we discuss dg-category theory) with which we assume some familiarity, see e.g. \cite{Kelly} for a comprehensive treatment, 
or \cite[Section 1.3]{Yau} and \cite[Chapter 3]{Riehl} for more concise introductions. 
\sk

Subsection \ref{sub:homotopy Lie algebras and SPS} recalls the bullet product from \cite{KLS24} and uses it to fix the notion of homotopy Lie algebra in Definition \ref{defi: L infinity algebra}. We then take the characterisation result \cite[Corollary 3.10]{KLS24} as a \textit{starting point} by introducing the notion of a shifted Poisson structure in a homotopy Lie algebra in Definition \ref{def:sps in homotopy lie algebra}. Subsection \ref{sub:inf2bra} recalls from \cite[Appendix A.2]{KLS25} the idea of cochain complexes concentrated in degrees $\{-1,0\}$ and their truncated tensor product; the resulting category $\Ch^{[-1,0]}$ is used as a basis for enrichment. We then recall from \cite[Subsection 2.1]{Kemp25} pseudonatural transformations (between $\Ch^{[-1,0]}$-functors between $\Ch^{[-1,0]}$-categories) together with their various compositions. We then recall from \cite{Kemp25} the definition of an infinitesimal 2-braiding together with the simplifications of Cirio and Faria Martin's \cite{Joao} notions of coherency and total symmetry.

\subsection{Symmetric monoidal dg-categories}\label{sub:symmetric monoidal dg-cats}
We say a vector space $V$ is a \textbf{graded vector space}  if it admits a decomposition into a direct sum $V^\sharp=\bigoplus_{m\in\bbZ}V^m$ of disjoint subspaces labelled by integers $m\in\bbZ$ (called \textbf{degree}). Given another graded vector space $W$, a linear map $f:V\to W$ is said to be a \textbf{graded linear map} if it preserves degrees, i.e. $\Imm(f_{|V^m})\subseteq W^m$ for all $m\in\bbZ$. 
\begin{defi}\label{def:shifted graded vector space}
Given $n\in\bbZ$ and a graded vector space $V$, the \textbf{$n$-shifted graded vector space} $V[n]$ is defined by $V[n]^m:=V^{m+n}$ for all $m\in\bbZ$.
\end{defi}
A \textbf{cochain complex} $V=(V,d)$ is a graded vector space $V$ with a graded linear map $d:V\to V[1]$
(called \textbf{differential}) which squares to zero, $d\circ d=0$. For $m\in\bbZ$, the subspace of \textbf{$m$-cocycles} is defined as $Z^m(V):=\ker(d_{|V^m})\subseteq V^m$ and the subspace of \textbf{$m$-coboundaries} is defined as $B^m(V):=\Imm(d_{|V^{m-1}})\subseteq V^m$. Given another cochain complex $W$, a \textbf{cochain map} $f:V\to W$ is a graded linear map which commutes with the differentials, $f\circ d=d\circ f$.
\begin{rem}\label{rem:cochain map restricts to coboundaries/cocycles}
A cochain map sends coboundaries to coboundaries and cocycles to cocycles.
\end{rem}
The category
of cochain complexes and cochain maps is denoted by $\Ch$, it carries the following closed symmetric monoidal structure:
\begin{enumerate}
\item[(i)] Given cochain complexes $V$ and $W$, their \textbf{tensor product} $V\otimes W$ is defined by
\begin{subequations}\label{eqn:Chtensor}
\begin{align}
(V\otimes W)^m:=\bigoplus_{n\in\bbZ}\big(V^n\otimes W^{m-n}\big)
\end{align}
with the differential provided as
\begin{align}
d(v\otimes w):=(d v)\otimes w + (-1)^{|v|}v\otimes(d w)
\end{align}
\end{subequations}
where $|\cdot|$ indicates the degree.
\item[(ii)] The monoidal unit is given by the ground field $\bbK$ regarded as a cochain complex concentrated in degree
$0$ with trivial differential.
\item[(iii)] The symmetric braiding $s:\otimes\Rightarrow\otimes^\op$ is defined by the \textbf{Koszul sign rule},
\begin{align}\label{eq:Koszul sign}
s_{V,W}:V\otimes W\to W\otimes V\qquad,\qquad v\otimes w\mapsto(-1)^{|v||w|}w\otimes v\quad.
\end{align}
\item[(iv)] The internal hom $\Ch[V,W]\in\Ch$ is given by
\begin{subequations}\label{eqn:Chhom}
\begin{align}
\Ch[V,W]^m:=\prod_{n\in\bbZ}\Vec[V^n,W^{n+m}]
\end{align}
with the differential structure as the \textbf{internal hom differential},
\begin{align}
\partial h:=d\circ h - (-1)^{|h|}h\circ d\quad.
\end{align}
\end{subequations}
\end{enumerate}
Note that cochain maps $f:V\to W$
are precisely the $0$-cocycles in $\Ch[V,W]$, i.e. elements $f\in\Ch[V,W]^0$ of degree $0$ 
satisfying $\partial f =0$. Given two cochain maps
$f,g:V\to W$, a \textbf{homotopy} $h:f\Rightarrow g$  is a degree $-1$ element $h\in\Ch[V,W]^{-1}$ such that $\partial h=f-g$, in which case the difference $f-g$ is a 0-coboundary. 
\begin{rem}\label{rem:tautological rewrite of graded linear map}
Shifting of degrees as in Definition \ref{def:shifted graded vector space} allows us to tautologically rewrite a homogeneous element $f\in\Ch[V,W]$ as a graded linear map $f:V\to W\big[|f|\big]$. 
\end{rem}
\begin{defi}
A category $\C$ is a \textbf{dg-category} if it has hom-complexes $\C[V,V']\in\Ch$ such that units are 0-cocycles $1_V\in Z^0\big(\C[V,V]\big)$ and composition is a cochain map. Given another dg-category $\C'$, we say a functor $F:\C\to\C'$ is a \textbf{dg-functor} it its morphism map is a cochain map. Given another dg-functor $F':\C\to\C'$, we say a natural transformation $\xi:F\Rightarrow F':\C\to\C'$ is a \textbf{dg-natural transformation} if $\xi_V\in Z^0\big(\C'[F(V),F'(V)]\big)$ for all $V\in\C$; if every component $\xi_V$ is an isomorphism then we say that $\xi$ is a \textbf{dg-natural isomorphism}.
\end{defi}
\begin{ex}\label{ex:Ch is a dg-category}
There is a dg-category of cochain complexes with hom-complexes given by the internal hom \eqref{eqn:Chhom}, we also denote this dg-category by $\Ch$ given that the context will always make it clear whether are referring to the category or the more general dg-category. 
\end{ex}
We denote by $\dgCat$ the 2-category of dg-categories, dg-functors and dg-natural transformations. As always in enriched category theory \cite[Section 1.4]{Kelly}, the 2-category $\dgCat$ has a canonical
symmetric monoidal structure:
\begin{enumerate}
\item[(i)] The dg-category $\C\cmp\D$ has objects given by 2-tuples, $(V,W)$ where $V\in\C$ and $W\in\D$, with hom-complexes given by the tensor product 
\begin{equation}\label{eq:tensor prod between hom complexes}
\big(\C\cmp\D\big)[(V,W),(V',W')]:=\C[V,V']\otimes\D[W,W']\quad.
\end{equation}
The (cochain) composition of $\C\cmp\D$ is simply given by
\begin{equation}\label{eq:interchange for tensor product}
(f'\otimes g')(f\otimes g):=(-1)^{|g'||f|}f'f\otimes g'g
\end{equation}
hence the units are given by $1_{V,W}:=1_V\otimes1_W$.
\item[(ii)] The dg-functor $F\cmp G:\C\cmp\D\to\C'\cmp\D'$ has object map $\big(F\cmp G\big)(V,W):=\big(F(V),G(W)\big)$ and morphism map
\begin{equation}
f\otimes g\mapsto F(f)\otimes G(g)\quad.
\end{equation}
\item[(iii)] The dg-natural transformation $\xi\cmp\zeta:F\cmp G\Rightarrow F'\cmp G':\C\cmp\D\to\C'\cmp\D'$ is simply given by $(\xi\cmp\zeta)_{V,W}:=\xi_V\otimes\zeta_W$.
\item[(iv)] The monoidal unit $\K$ is the single object dg-category with hom-complex given by the ground field $\bbK$ and cochain composition simply given by multiplication.
\item[(v)] The symmetric braiding $S$ has components $S_{\C,\D}:\C\cmp\D\cong\D\cmp\C$ given simply by: 
\begin{subequations}
\begin{alignat}{6}
(V,W)&\mapsto(W,V)\quad&&,\\
f\otimes g&\mapsto (-1)^{|f||g|}g\otimes f&&.
\end{alignat}
\end{subequations}
\end{enumerate}
This symmetric strict monoidal 2-category $(\dgCat,\cmp,\K,S)$ allows us to provide the following fundamental definition.

\begin{defi}\label{def:symmetric strict monoidal dg-category}
A \textbf{symmetric strict monoidal dg-category} is a 4-tuple $(\C,\odot,I,\gamma)$ consisting of: a dg-category $\C$, a dg-functor $\odot:\C\cmp\C\to\C$, an object $I\in\C$ and a dg-natural isomorphism $\gamma:\odot\Rightarrow\odot^\op$ which is involutive, i.e.
\begin{equation}
\gamma_{V,W}=\gamma_{W,V}^{-1}\quad.
\end{equation}
This data is required to satisfy the axioms of strict associativity, strict unitality and \textit{the} hexagon axiom, 
\begin{align}\label{eq:hexagon axiom of a symmetric braiding}
\gamma_{U\odot V,W}=(\gamma_{U,W}\odot1_V)(1_U\odot\gamma_{V,W})\quad.
\end{align} 
\end{defi}
\begin{ex}
Following on from Example \ref{ex:Ch is a dg-category}, $(\Ch,\otimes,\bbK,s)$ is a symmetric strict monoidal dg-category; we express the dg-naturality of $s$ for later purposes,
\begin{align}\label{eq:naturality of symmetric braiding}
s_{V',W'}(f\otimes g)=(-1)^{|f||g|}(g\otimes f)s_{V,W}\quad.
\end{align}
\end{ex}
\begin{defi}\label{def:sym mon dg-equivalence}
Given symmetric strict monoidal dg-categories $(\C,\odot,I,\gamma)$ and $(\C',\odot',I',\gamma')$, a \textbf{symmetric strict monoidal dg-equivalence} is an essentially surjective fully faithful dg-functor $F:\C\to\C'$ such that:
\begin{subequations}
\begin{alignat}{6}
F(V\odot W)=\,&F(V)\odot' F(W)\quad&&,\\
F(f\odot g)=\,&F(f)\odot' F(g)&&,\\
F(I)=\,&I'&&,\\
F(\gamma_{V,W})=\,&\gamma'_{F(V),F(W)}&&.
\end{alignat}
\end{subequations}
\end{defi}
We close this subsection by introducing the following definition which weakens the notion of dg-natural transformation; we will need it for later purposes in Subsection \ref{subsec:Poisson inf2bra}.
\begin{defi}\label{def:dg-pseudonatural transformation}
Given dg-functors $F,G:\C\to\D$, a \textbf{dg-pseudonatural transformation} $\varrho:F\Rightarrow G:\C\to\D$ consists of the following two pieces of data:
\begin{enumerate}
\item[(i)] For each object $U\in\C$, a 0-cocycle $\varrho_U\in Z^0\big(\D[F(U),G(U)]\big)$.
\item[(ii)] For each pair of objects $U,V\in\C$, a homotopy $\varrho_{(\cdot)}:\C[U,V]\to\D[F(U),G(V)][-1]$.
\end{enumerate}
These two pieces of data are required to satisfy the following two axioms:
\begin{enumerate}
\item[(i)] For each morphism $f\in\C[U,V]$,
\begin{equation}\label{eq:pseudonaturality axiom of dg-pseudo}
G(f)\varrho_U-\varrho_VF(f)=\partial(\varrho_f)+\varrho_{\partial f}\quad.
\end{equation}
\item[(i)] For each pair of morphisms $f\in\C[U,V]$ and $g\in\C[V,W]$,
\begin{equation}\label{eq:derivation axiom of dg-pseudo}
\varrho_{gf}=\varrho_gF(f)+(-1)^{|g|}G(g)\varrho_f\quad.
\end{equation}
\end{enumerate}
\end{defi}

\subsection{Homotopy Lie algebras and shifted Poisson structures}\label{sub:homotopy Lie algebras and SPS}
\begin{defi}
For $j\in\bbN$ and $i_1,\ldots,i_j\in\bbN$, an \textbf{$(i_1,\ldots,i_j)$-shuffle} of $(1,\ldots,i_1+\ldots+i_j)$ is a permutation $\sigma\in\S_{i_1+\ldots+i_j}$ such that $\sigma(i_1+\ldots+i_k+1)<\sigma(i_1+\ldots+i_k+2)<\cdots<\sigma(i_1+\ldots+i_{k+1})$ for $0\leq k\leq j-1$. We denote the subset of such $(i_1,\ldots,i_j)$-shuffles as $\S_{i_1,\ldots,i_j}\subseteq\S_{i_1+\ldots+i_j}$. The \textbf{signature} of a permutation $\sigma$ is defined as
\begin{subequations}
\begin{align}
  \mathrm{sgn}(\sigma):=\begin{cases}
    \,\,\,1\,\,\,,\qquad&\mathrm{Even~number~of~transpositions}\\
    -1\,,\qquad&\mathrm{Odd~number~of~transpositions}
    \end{cases}\quad.
\end{align}
Given a graded vector space $\g$, we define the \textbf{$(i_1,\ldots,i_j)$-shuffler} as:
\begin{align}
\Sigma_{i_1,\ldots,i_j}:=\sum_{\sigma\in\S_{i_1,\ldots,i_j}}\mathrm{sgn}(\sigma)\sigma:\g^{\otimes i_1}\otimes\ldots\otimes\g^{\otimes i_j}&\to\g^{\otimes(i_1+\ldots+i_j)}\,,\label{subeq:(i,j)-shuffler}\\
(x_1\otimes\cdots\otimes x_{i_1})\otimes\ldots\otimes(x_{i_1+\ldots+i_{\j}+1}\otimes\cdots\otimes x_{i_1+\ldots+i_j})&\mapsto\sum_{\sigma\in\S_{i_1,\ldots,i_j}}\varepsilon_\sigma\,x_{\sigma(1)}\otimes\cdots\otimes x_{\sigma(i_1+\ldots+i_j)}\,,\nn
\end{align}
where $\varepsilon_\sigma:=\chi_\sigma\,\mathrm{sgn}(\sigma)$ and the sign $\chi_\sigma$ is the Koszul sign \eqref{eq:Koszul sign} coming from the permutation $\sigma$ of graded vector space elements $x\in\g$. We also define the $(i_1,\ldots,i_j)$-\textbf{symmetriser} as:
\begin{align}
\Sigma^+_{i_1,\ldots,i_j}:=\sum_{\sigma\in\S_{i_1,\ldots,i_j}}\sigma:\g^{\otimes i_1}\otimes\ldots\otimes\g^{\otimes i_j}&\to\g^{\otimes(i_1+\ldots+i_j)}\,,\label{subeq:(i,j)-symmetriser}\\
(x_1\otimes\cdots\otimes x_{i_1})\otimes\ldots\otimes(x_{i_1+\ldots+i_{\j}+1}\otimes\cdots\otimes x_{i_1+\ldots+i_j})&\mapsto\sum_{\sigma\in\S_{i_1,\ldots,i_j}}\chi_\sigma\,x_{\sigma(1)}\otimes\cdots\otimes x_{\sigma(i_1+\ldots+i_j)}\,,\nn
\end{align}
\end{subequations}
We define $\Sigma^-_{i_1,\ldots,i_j}:=\Sigma_{i_1,\ldots,i_j}$ and set $\Sigma_{i_1,\ldots,i_j}^\pm$ to 0 if any of the $i_k$ are $-1$. 
\end{defi}
\begin{rem}\label{rem:shuffler restriction is well-defined}
Restricting the $(i_1,\ldots,i_j)$-shuffler \eqref{subeq:(i,j)-shuffler} to the subspace $(\bigwedge^{i_1}\g)\otimes\ldots\otimes(\bigwedge^{i_j}\g)$ produces a well-defined map $\Sigma_{i_1,\ldots,i_j}:(\bigwedge^{i_1}\g)\otimes\ldots\otimes(\bigwedge^{i_j}\g)\to\bigwedge^{i_1+\ldots+i_j}\g$. Likewise, restricting the $(i_1,\ldots,i_j)$-symmetriser \eqref{subeq:(i,j)-symmetriser} to the subspace $(\Sym^{i_1}\g)\otimes\ldots\otimes(\Sym^{i_j}\g)$ produces a well-defined map $\Sigma^+_{i_1,\ldots,i_j}:(\Sym^{i_1}\g)\otimes\ldots\otimes(\Sym^{i_j}\g)\to\Sym^{i_1+\ldots+i_j}\g$. Within this context, we state the following relations which can be easily proved by evaluating on inputs. Given $i,j\in\bbN$, the shuffler \eqref{subeq:(i,j)-shuffler} satisfies
\begin{align}\label{eq:unitality of shuffler}
\Sigma_{i,\j}\otimes1_\g+(-1)^j(1_{\i}\otimes s_{j,\g})(\Sigma_{\i,j}\otimes1_\g)=\Sigma_{i,j}=1_\g\otimes \Sigma_{\i,j}+(-1)^i(s_{\g,i}\otimes1_{\j})(1_\g\otimes \Sigma_{i,\j}),
\end{align}
and, for $i_1,\ldots,i_j\in\bbN$ and $1\leq k\leq j-1$:
\begin{alignat}{6}
\Sigma_{i_1,\ldots,i_j}&=(1_{i_1+\ldots+i_{k-1}}\otimes\Sigma_{i_k,i_{k+1}}\otimes1_{i_{k+2}+\ldots+i_j})\Sigma_{i_1,\ldots,i_{k-1},i_k+i_{k+1},i_{k+2}\ldots,i_j}\,&&,\label{eq:associativity of shuffler}\\
\Sigma_{i_1,\ldots,i_j}&=(-1)^{i_ki_{k+1}}(1_{i_1+\ldots+i_{k-1}}\otimes s_{i_{k+1},i_k}\otimes1_{i_{k+2}+\ldots+i_j})\Sigma_{i_1,\ldots,i_{k-1},i_{k+1},i_k,i_{k+2},\ldots,i_j}\,&&.\label{eq:symmetry of shuffler}
\end{alignat}
\end{rem}
Setting 
\begin{align}\label{eq:Sym_pm}
\Sym_\pm^i\g:=\begin{cases}
\Sym^i\g&,\quad\text{for $n$ even}\\
{\bigwedge}^i\g&,\quad\text{for $n$ odd}\\
\end{cases}
\end{align}
and 
\begin{align}
\Sigma^\pm_{i,j}:=\begin{cases}
\Sigma^+_{i,j}&,\quad\text{for $n$ even}\\
\Sigma_{i,j}&,\quad\text{for $n$ odd}\\
\end{cases}\quad,
\end{align}
we recall from \cite[(3.28)]{KLS24} the bullet product $\bullet$ of graded linear maps between skew-/symmetric powers of $\g$. Given $p,q\in\bbN$, $P:\bigwedge^j\g\to(\Sym_\pm^p\g)[|P|]$ and $Q:\bigwedge^k\g\to(\Sym_\pm^q\g)[|Q|]$, we define their \textbf{bullet product} $P\bullet Q:\big(\bigwedge^{\j}\g\big)\otimes\big(\bigwedge^k\g\big)\to(\Sym_\pm^p\g)\otimes(\Sym_\pm^{\q}\g)\big[|P|+|Q|\big]$ as
\begin{align}\label{eq:definition of bullet product}
P\bullet Q:=(-1)^{(|P|+\j)\q n+|P|\k}\Sigma_{p,\q}^\pm(P\otimes1_{\q})(1_{\j}\otimes Q)\Sigma_{\j,k}
\end{align}
and, as in Remark \ref{rem:shuffler restriction is well-defined}, this produces a well-defined map 
\begin{equation}
P\bullet Q:{\Mwedge}^{\j+k}\g\to(\Sym_\pm^{p+\q}\g)\big[|P|+|Q|\big]\quad.
\end{equation}
The \textbf{Schouten-Nijenhuis} bracket is constructed as the commutator bracket of the bullet product \cite[(3.17)]{KLS24}, 
\begin{align}\label{eq:def of Schouten by bullet}
\pb{P}{Q}:=P\bullet Q-(-1)^{(|P|+\p n+\j)(|Q|+\q n+\k)}Q\bullet P\quad.
\end{align}
\begin{rem}\label{rem:bullet product}
Restricting to the case where $P:\bigwedge^j\g\to\g[|P|]$ and $Q:\bigwedge^k\g\to\g[|Q|]$, we have
\begin{align}\label{eq:bullet composition operation}
P\bullet Q\overset{\eqref{eq:definition of bullet product}}{=}(-1)^{|P|\k}P(1_{\j}\otimes Q)\Sigma_{\j,k}\overset{\eqref{eq:symmetry of shuffler},\eqref{eq:naturality of symmetric braiding}}{=}(-1)^{(|P|+\j)\k}P(Q\otimes1_{\j})\Sigma_{k,\j}\quad.
\end{align}
\end{rem}
As in \cite[Corollary 4.1]{KLS24}, Remark \ref{rem:bullet product} allows us to naturally produce the standard definition of an $L_{\infty}$-algebra from, say, \cite[(7a)]{Jurco} or \cite[Remark 3.6]{Schnitzer}.
\begin{defi}\label{defi: L infinity algebra}
Given a family of graded linear maps
\begin{subequations}
\begin{align}\label{eq:data of l alg}
\pi_1:=\left\{\pi^i_1:=(-1)^{\i}\ell^i:{\Mwedge}^i\g\to\g[2-i]\right\}_{i\geq1}\quad,
\end{align}
$\g$ is a \textbf{homotopy Lie algebra} if $\pb{\pi_1}{\pi_1}=0$, i.e. the \textbf{generalised Jacobi identity} is satisfied: 
\begin{align}\label{eq:generalised Jacobi identity}
\forall i\geq1\,,\qquad\qquad\sum_{\j+k=i}(-1)^{\j}\ell^j(\ell^k\otimes1_{\j})\Sigma_{k,\j}=0
\end{align}
which reads in terms of inputs $x_1,\ldots,x_i\in\g$ as
\begin{align}
\sum_{\j+k=i}\sum_{\sigma\in\S_{k,\j}}\varepsilon_\sigma(-1)^{\j}\ell^j\big(\ell^k(x_{\sigma(1)},\ldots,x_{\sigma(k)}),x_{\sigma(k+1)},\ldots,x_{\sigma(i)}\big)=0\quad.
\end{align}
\end{subequations}
\end{defi}
\begin{rem}\label{rem:low arity data of L_inf-algebra}
Let us inspect the low arity data and the requirements that the generalised Jacobi identity \eqref{eq:generalised Jacobi identity} imposes:
\begin{enumerate}
\item $\ell^1:\g\to\g[1]$ is a differential on $\g$ because $\ell^1\circ\ell^1=0$.
\item The \textbf{bracket} $\ell^2:\bigwedge^2\g\to\g$ is a cochain map because $\ell^1\circ\ell^2=\ell^2\circ\ell^1$.
\item The bracket $\ell^2$ satisfies the Jacobi identity up to the internal hom differential of the \textbf{Jacobiator} $\ell^3:\bigwedge^3\g\to\g[-1]$, i.e. $\ell^2\circ(\ell^2\otimes\id_\g)\circ \Sigma_{2,1}=\ell^1\circ\ell^3+\ell^3\circ\ell^1$. 
\end{enumerate}
\end{rem}
\begin{ex}\label{ex:DGLA and ordinary Lie algebra}
A \textbf{DGLA} is a homotopy Lie algebra with $\ell^i=0$ for $i\geq3$ whereas a \textbf{Lie algebra} is a homotopy Lie algebra with $\g=\g^0$. 
\end{ex}
Taking the characterisation result \cite[Corollary 3.10]{KLS24} as our starting point, we introduce the following definition.
\begin{defi}\label{def:sps in homotopy lie algebra}
For $n\in\bbZ$, an \textbf{$n$-shifted Poisson structure} in a homotopy Lie algebra $\g=(\g,\pi_1)$ is a family of maps 
\begin{equation}
\pi_{\geq2}:=\left\{\pi_w^i:{\Mwedge}^i\g\to(\Sym_\pm^w\g)[(1-w)n+2-i]\right\}_{w\geq2,\,i\in\bbN}
\end{equation}
which satisfies the \textbf{Maurer-Cartan equation}
\begin{equation}\label{eq:MC eqn}
\pb{\pi}{\pi}=0\quad,
\end{equation}
where $\pi:=\pi_1+\pi_{\geq2}$.
\end{defi}
Using the bullet product formula \eqref{eq:definition of bullet product} and expanding the Maurer-Cartan equation \eqref{eq:MC eqn} into its weight and input components, we must have: $\forall w\geq2$ and $\forall i\in\bbN$,
\begin{equation}
\sum_{\begin{smallmatrix}\p+q=w\\ \j+k=i
\end{smallmatrix}}(-1)^{(\p n+j)(\q n+\k)+\j\q n}\Sigma_{p,\q}^\pm[\pi_p^j\otimes1_{\q}][1_{\j}\otimes\pi_q^k]\Sigma_{\j,k}=0\quad.
\end{equation}
\begin{rem}\label{rem:2SPS}
This paper is particularly concerned with 2-shifted Poisson structures, i.e. 
\begin{subequations}\label{subeq:2SPS}
\begin{align}
\left\{\pi_w^i:{\Mwedge}^i\g\to(\Sym^w\g)[4-2w-i]\right\}_{w\geq2,i\in\bbN}
\end{align} 
such that: $\forall w\geq2$ and $\forall i\in\bbN$,  
\begin{equation}\label{eq:2sps mc}
\sum_{\begin{smallmatrix}\p+q=w\\ \j+k=i
\end{smallmatrix}}(-1)^{j\k}\Sigma_{p,\q}^+[\pi_p^j\otimes1_{\q}][1_{\j}\otimes\pi_q^k]\Sigma_{\j,k}=0\quad.
\end{equation}
\end{subequations}
Subsection \ref{subsec:Poisson inf2bra} will make use of the weight terms $w\in\{2,3\}$, i.e.:
\begin{alignat}{6}
&\left\{\pi_2^i:{\Mwedge}^i\g\to(\Sym^2\g)[-i]\right\}_{i\in\bbN}\quad&&,\label{eq:weight 2 of 2SPS}\\
&\left\{\pi_3^i:{\Mwedge}^i\g\to(\Sym^3\g)[-2-i]\right\}_{i\in\bbN}&&,\label{eq:weight 3 of 2SPS}
\end{alignat}
together with the first two weight-relations of the Maurer-Cartan equation \eqref{eq:2sps mc}: $\forall i\in\bbN$,
\begin{align}
\sum_{\j+k=i}(-1)^{j\k}\left(\Sigma^+_{1,1}[\pi_1^j\otimes1_\g][1_{\j}\otimes\pi_2^k]+\pi_2^j[1_{\j}\otimes\pi_1^k]\right)\Sigma_{\j,k}=\,&0\,,\label{eq:weight 2 MC}\\
\sum_{\j+k=i}(-1)^{j\k}\left(\Sigma^+_{1,2}[\pi_1^j\otimes1_{\g^{\otimes2}}][1_{\j}\otimes\pi_3^k]+\pi_3^j[1_{\j}\otimes\pi_1^k]+\Sigma^+_{2,1}[\pi_2^j\otimes1_\g][1_{\j}\otimes\pi_2^k]\right)\Sigma_{\j,k}=\,&0\,.\label{eq:weight 3 MC}
\end{align}
\end{rem}

\begin{ex}
Following Example \ref{ex:DGLA and ordinary Lie algebra}, if $\g=\g^0$ is an ordinary Lie algebra then it is trivial to check that a 2-shifted Poisson structure is merely a symmetric tensor $\pi_2^0:\bbK\to\Sym^2\g$ such that $\Sigma_{1,1}^+[\pi^2_1\otimes\id_\g][\id_\g\otimes\pi_2^0]=0$, i.e. a symmetric ad-invariant tensor $\pi_2^0\in(\Sym^2\g)^{\ad\g}$.
\end{ex}

\subsection{Homotopy 2-categories and infinitesimal 2-braidings}\label{sub:inf2bra}
We denote by
\begin{align}
\Ch^{[-1,0]}\,\subseteq\, \Ch
\end{align}
the full subcategory of cochain complexes $V\in \Ch$
concentrated in degrees $\{-1,0\}$. We shall often denote such objects by
\begin{align}
V = \Big(\xymatrix@C=2em{
V^{-1} \,\ar[r]^-{d} \,&\, V^0
}\Big)
\,\in\, \Ch^{[-1,0]} \quad.
\end{align}
Note that the tensor product $\otimes$ on $\Ch$ does not directly restrict
to the subcategory $\Ch^{[-1,0]}\subseteq\Ch$ because
$V\otimes W\in \Ch$ is, in general, concentrated in degrees $\{-2,-1,0\}$,
for $V,W\in \Ch^{[-1,0]}$. This issue can be resolved by composing the tensor product $\otimes$ with
the \textbf{good truncation functor} $\tau^{[-1,0]}:\Ch\to\Ch^{[-1,0]}$, recall this is defined on objects $U\in\Ch$ simply as
\begin{flalign}
\tau^{[-1,0]}(U)\,:=\, \bigg(
\xymatrix@C=4em{
\frac{U^{-1}}{B^{-1}(U)} \ar[r]^-{d} ~&~
Z^0(U)
}
\bigg)\quad.
\end{flalign}
Remark \ref{rem:cochain map restricts to coboundaries/cocycles} tells us that the morphism map of $\tau^{[-1,0]}:\Ch\to\Ch^{[-1,0]}$ is simply a restriction. The \textbf{truncated tensor product} is the following composition of functors,
\begin{subequations}\label{eqn:truncatedtensor}
\begin{align}
\boxtimes\,:\,
\xymatrix@C=3em{
\Ch^{[-1,0]}\times \Ch^{[-1,0]}\ar[r]^-{\subseteq} ~&~ \Ch\times\Ch\ar[r]^-{\otimes} ~&~
\Ch \ar[r]^-{\tau^{[-1,0]}} ~&~ \Ch^{[-1,0]}
}\quad,
\end{align}
which reads explicitly on objects $V,W\in \Ch^{[-1,0]}$ as
\begin{flalign}
V\boxtimes W\,:=\, \bigg(
\xymatrix@C=4em{
\frac{\big(V^{-1}\otimes W^0\big)\oplus\big(V^0\otimes W^{-1}\big)}{d\big(V^{-1}\otimes W^{-1}\big)} \ar[r]^-{d} ~&~
V^0\otimes W^0
}
\bigg)\quad.
\end{flalign}
\end{subequations}
One directly checks that $\Ch^{[-1,0]}$ forms a symmetric monoidal category
with respect to the truncated tensor product $\boxtimes$, the monoidal unit $\bbK\in\Ch^{[-1,0]}$
and the symmetric braiding
\begin{align}
s_{V,W}:V\boxtimes W\to W\boxtimes V\qquad,\qquad v\otimes w\mapsto w\otimes v\quad,
\end{align}
where all Koszul signs are trivial
because there are no odd-odd degree combinations in the truncated tensor product \eqref{eqn:truncatedtensor}. We observe that the symmetric monoidal categories
$(\Ch,\otimes,\bbK,s)$ and $(\Ch^{[-1,0]},\boxtimes,\bbK,s)$
are related by a lax symmetric monoidal functor.
\begin{lem}\label{lem:laxtruncation}
The truncation functor $\tau^{[-1,0]}:\Ch\to\Ch^{[-1,0]}$ is canonically lax symmetric monoidal.
\end{lem}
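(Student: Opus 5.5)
The plan is to write down the lax structure explicitly and then check the coherence axioms, which will reduce to the corresponding axioms in $\Ch$ restricted to subquotients. The unit map $\bbK\to\tau^{[-1,0]}(\bbK)$ is the identity, since $\bbK$ is concentrated in degree $0$ with $Z^0(\bbK)=\bbK$ and $B^{-1}(\bbK)=0$. For $U,W\in\Ch$, I would define the multiplication
\begin{equation}
\mu_{U,W}:\tau^{[-1,0]}(U)\boxtimes\tau^{[-1,0]}(W)\longrightarrow\tau^{[-1,0]}(U\otimes W)
\end{equation}
degreewise. In degree $0$ it is the map $Z^0(U)\otimes Z^0(W)\to Z^0(U\otimes W)$, $u\otimes w\mapsto u\otimes w$. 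This is well defined because $d(u\otimes w)=du\otimes w\pm u\otimes dw=0$ when both factors are cocycles.

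In degree $-1$ I would first define $\mu$ on representatives. A class $[u]\otimes w$ with $[u]\in U^{-1}/B^{-1}(U)$ and $w\in Z^0(W)$ goes to $[u\otimes w]\in(U\otimes W)^{-1}/B^{-1}(U\otimes W)$, and similarly $u'\otimes[w']$ with $u'\in Z^0(U)$ goes to $[u'\otimes w']$. This is independent of representatives: if $u=dx$ with $x\in U^{-2}$, then $dx\otimes w=d(x\otimes w)$ because $dw=0$, and the other slot is analogous with the Koszul sign.

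Next, $\mu$ must kill the quotient by $d\big(\tau U^{-1}\otimes\tau W^{-1}\big)$ in \eqref{eqn:truncatedtensor}. For $[a]\otimes[b]$ we have $d([a]\otimes[b])=da\otimes[b]-[a]\otimes db$, which is sent to $[da\otimes b-a\otimes db]=[d(a\otimes b)]=0$ in $(U\otimes W)^{-1}/B^{-1}$. This identity is the one step I expect to need real care. The point to watch is that $a\otimes b$ is an honest element of $(U\otimes W)^{-2}$, so the quotient by $B^{-1}$ absorbs it; the representative independence must be checked compatibly with this. Compatibility of $\mu$ with the differentials is then immediate, since both differentials are induced by $d$ on $U\otimes W$.

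Naturality of $\mu$ in $U$ and $W$ is clear because the morphism map of $\tau^{[-1,0]}$ is restriction (Remark \ref{rem:cochain map restricts to coboundaries/cocycles}). For the remaining axioms I would argue as follows.
\begin{enumerate}
\item[(i)] The associativity and unitality diagrams are checked on generators $u\otimes v\otimes w$, where both composites are the class of $u\otimes v\otimes w$, because everything is induced by the identity on representatives.
\item[(ii)] The symmetry axiom $\tau(s_{U,W})\circ\mu_{U,W}=\mu_{W,U}\circ s$ needs one sign check. The Koszul sign $(-1)^{|u||w|}$ in $\Ch$ is trivial on the relevant pieces, since at most one factor has odd degree, $-1$. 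This matches the sign-free braiding on $\Ch^{[-1,0]}$.
\end{enumerate}
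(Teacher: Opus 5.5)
Your proposal is correct and follows the same route as the paper. In both, the unit map is the identity $\bbK=\tau^{[-1,0]}(\bbK)$, and the lax structure $\tau^{[-1,0]}(U)\boxtimes\tau^{[-1,0]}(W)\to\tau^{[-1,0]}(U\otimes W)$ is the map induced by the identity on representatives, which the paper simply calls the obvious cochain map. Your checks fill in what the paper leaves implicit: well-definedness, including the vanishing on $d(\tau^{[-1,0]}(U)^{-1}\otimes\tau^{[-1,0]}(W)^{-1})$; compatibility with the differentials; naturality; the associativity and unitality axioms; and the symmetry axiom, where the Koszul signs are trivial.
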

\begin{proof}
The functor $\tau^{[-1,0]}$ preserves the monoidal units strictly, i.e.\ $\tau^{[-1,0]}(\bbK)=\bbK$,
and the lax structure $\tau^{[-1,0]}(V)\,\boxtimes\,\tau^{[-1,0]}(W)\to 
\tau^{[-1,0]}\big(V\otimes W\big)$ for the tensor product of $V,W\in\Ch$ 
is given by the obvious cochain map (drawn vertically)
\begin{flalign}
\begin{gathered}
\xymatrix@C=4em@R=2em{
\ar[d] \frac{\big(\tfrac{V^{-1}}{d(V^{-2})}\otimes Z^0(W)\big)\oplus\big(Z^0(V)\otimes\tfrac{W^{-1}}{d(W^{-2})}\big)}{d\big(V^{-1}\otimes W^{-1}\big)} \ar[r]^-{d} ~&~ Z^0(V)\otimes Z^0(W)
\ar[d]\\
\frac{(V\otimes W)^{-1}}{d\big(V\otimes W\big)^{-2}}\ar[r]_-{d}~&~Z^0(V\otimes W)
}
\end{gathered}\quad.
\end{flalign}
\end{proof}
In complete analogy to Subsection \ref{sub:symmetric monoidal dg-cats}, there exists a symmetric strict monoidal 2-category $(\dgCat^{[-1,0]},\smallbox,\K,S)$ which allows us to define the notion of a symmetric strict monoidal $\Ch^{[-1,0]}$-category. The lax symmetric monoidal truncation functor  $\tau^{[-1,0]}:\Ch\to\Ch^{[-1,0]}$ induces a change of base 2-functor
\begin{flalign}\label{eqn:homotopy2category}
\Ho_2:\dgCat\to\dgCat^{[-1,0]}
\end{flalign}
which assigns to a dg-category $\C\in\dgCat$, 
with objects $\C_0$ and hom-complexes $\C[V,W]\in\Ch$, 
its \textbf{homotopy 2-category} $\Ho_2(\C)\in\dgCat^{[-1,0]}$, 
which has the same objects $\C_0$ and hom-complexes
given by the truncated cochain complexes $\tau^{[-1,0]}\big(\C[V,W]\big)\in\Ch^{[-1,0]}$. Given a dg-functor $F:\C\to\D$, the $\Ch^{[-1,0]}$-functor $\Ho_2(F):\Ho_2(\C)\to\Ho_2(\D)$ has the same object map as $F$ and its morphism map is merely a restriction of that of $F$. Given a dg-natural transformation $\xi:F\Rightarrow G:\C\to\D$, the $\Ch^{[-1,0]}$-natural transformation $\Ho_2(\xi):\Ho_2(F)\Rightarrow\Ho_2(G)$ is simply defined by $\Ho_2(\xi):=\xi$. 
\sk

The 2-functor \eqref{eqn:homotopy2category} assigning homotopy $2$-categories
is lax symmetric monoidal with the lax structure 
$\Ho_2(\C)\smallbox\Ho_2(\D)\to\Ho_2(\C\cmp\D)$
induced from the lax structure of the good truncation functor in
Lemma \ref{lem:laxtruncation}, for all $\C,\D\in\dgCat$.
For later use, we record the following direct consequence of this result.
\begin{cor}
Let $(\C,\odot,I,\gamma)$ be a symmetric strict monoidal dg-category, the homotopy 2-category $\Ho_2(\C)\in\dgCat^{[-1,0]}$
is canonically a symmetric strict monoidal $\Ch^{[-1,0]}$-category.
\end{cor}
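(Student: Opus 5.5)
The plan is to transport the symmetric strict monoidal structure along the lax symmetric monoidal 2-functor $\Ho_2:\dgCat\to\dgCat^{[-1,0]}$ of \eqref{eqn:homotopy2category}. A symmetric strict monoidal dg-category is the same as a commutative monoid object in $(\dgCat,\cmp,\K,S)$, i.e.\ a strictly associative and unital monoid whose product is compatible with $S$ up to the involutive dg-natural isomorphism $\gamma$. Lax symmetric monoidal 2-functors send such structures to the corresponding structures in the target. Concretely, I would define the monoidal product on $\Ho_2(\C)$ as the composite
\begin{equation*}
\Ho_2(\C)\smallbox\Ho_2(\C)\longrightarrow\Ho_2(\C\cmp\C)\xrightarrow{\ \Ho_2(\odot)\ }\Ho_2(\C)\quad,
\end{equation*}
where the first arrow is the lax structure induced from Lemma \ref{lem:laxtruncation}. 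The unit is $I$, seen as the image of $\K=\Ho_2(\K)\to\Ho_2(\C)$; this uses that $\tau^{[-1,0]}(\bbK)=\bbK$ strictly. The braiding is $\Ho_2(\gamma):=\gamma$.

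Next I would check that this is well defined and that the axioms hold.

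First, $\gamma_{V,W}$ is a $0$-cocycle, so it lies in $Z^0(\C[V\odot W,W\odot V])$. It is therefore a morphism of the truncated hom-complex, and its inverse $\gamma_{W,V}$ is one too. Involutivity is then inherited verbatim.

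Second, $\Ho_2(\gamma)$ must be $\Ch^{[-1,0]}$-natural with respect to the new product. Dg-naturality of $\gamma$ on all morphisms restricts to the morphisms of $\Ho_2(\C)$, namely $0$-cocycles and classes of $(-1)$-cochains modulo coboundaries. The relation descends to the quotient because composing with the cocycle $\gamma$ is a cochain map, so by Remark \ref{rem:cochain map restricts to coboundaries/cocycles} it preserves coboundaries.

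Third, strict associativity, strict unitality and the hexagon axiom \eqref{eq:hexagon axiom of a symmetric braiding} are equations between composites of $\odot$, identities and $\gamma$. Since $\Ho_2$ acts on morphisms by restriction and quotient, each such equation in $\C$ implies the same equation in $\Ho_2(\C)$.

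The main obstacle is strict associativity of the new product, which needs two things. The lax structure maps must be coherent, meaning the two ways of going from $\Ho_2(\C)\smallbox\Ho_2(\C)\smallbox\Ho_2(\C)$ to $\Ho_2(\C\cmp\C\cmp\C)$ agree. They must also be compatible with the symmetry $S$. Both reduce to the corresponding properties of the lax structure of $\tau^{[-1,0]}$. Those maps are induced by $v\otimes w\mapsto v\otimes w$ on representatives, so associativity and symmetry follow from those of $\otimes$ on $\Ch$. The one point to check carefully is the sign: odd–odd terms vanish in $\boxtimes$, so the Koszul sign in $S$ and in \eqref{eq:interchange for tensor product} is trivial after truncation. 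This matches the trivial-sign braiding on $\Ch^{[-1,0]}$.
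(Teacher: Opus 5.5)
Your proposal is correct and follows the paper's route. The paper records this corollary as a direct consequence of $\Ho_2$ being lax symmetric monoidal, with lax structure induced from Lemma \ref{lem:laxtruncation}. You unpack exactly that transport: the product is the lax structure map followed by $\Ho_2(\odot)$, the unit is $I$, and the braiding is $\gamma$. You then check the axioms, including the point that Koszul signs become trivial after truncation.

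One small slip in terminology: what you describe is a symmetric, strictly associative and unital pseudomonoid in $\dgCat$, not a commutative monoid object. A commutative monoid object would force $\gamma$ to be an identity. Your parenthetical description already states the correct notion.
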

We now recall an important weakened variant of a $\Ch^{[-1,0]}$-natural transformation; see \cite[Definition 2.1]{Kemp25'} for the following definition.
\begin{defi}\label{def:pseudonatural}
Given $\Ch^{[-1,0]}$-categories $\CC,\CC'$ and $\Ch^{[-1,0]}$-functors $F,F':\CC\to\CC'$, a \textbf{pseudonatural
transformation} $\xi:F\Rightarrow F':\CC\to\CC'$ consists of the following two pieces of data:
\begin{enumerate}
\item[(i)] For each object $U\in \CC$, a degree 0 morphism $\xi_U\in\CC'[F(U),F'(U)]^0$.
\item[(ii)] For each pair of objects $U,U'\in \CC$, a homotopy $\xi_{(\cdot)}:\CC[U,U']\to\CC'\left[F(U),F'(U')\right][-1]$. 
\end{enumerate}
These two pieces of data are required to satisfy the following two axioms: for all $f\in\CC[U,U']$ and $f'\in \CC[U',U'']$,
\begin{subequations}
\begin{alignat}{2}
F'(f)\,\xi_U - \xi_{U'}\,F(f)\,&=\partial\xi_f+\xi_{\partial f}&&,\label{eq:dubindex is homotopy}\\
\xi_{f' f}\,&=\,\,\xi_{f'}\,F(f) + F'(f')\,\xi_f\quad&&.\label{eqn:dubindex splits prods}
\end{alignat}
\end{subequations}
\end{defi}
Notice that a $\Ch^{[-1,0]}$-natural transformation is simply a pseudonatural transformation with trivial homotopy components $\xi_f=0$, e.g. the \textbf{identity pseudonatural transformation}
$\Id_F:F\Rightarrow F$ defined by $(\Id_F)_U:=1_{F(U)}$ and
$(\Id_F)_f:=0$.
\begin{rem}
Given a dg-pseudonatural transformation $\varrho:F\Rightarrow G:\C\to\D$ as in Definition \ref{def:dg-pseudonatural transformation}, we get a pseudonatural transformation $\varrho:F\Rightarrow G:\Ho_2(\C)\to\Ho_2(\D)$.
\end{rem}
Given two pseudonatural transformations $F\xRightarrow{\xi}F'\xRightarrow{\xi'}F'':\CC\to\CC'$, the \textbf{vertical composite}
\begin{subequations}\label{eqn: ver comp of pseudos}
\begin{equation}
\begin{tikzcd}
	{\CC} && {\CC'}
	\arrow[""{name=0, anchor=center, inner sep=0}, "F'"{description}, from=1-1, to=1-3]
	\arrow[""{name=1, anchor=center, inner sep=0}, "F", curve={height=-32pt}, from=1-1, to=1-3]
	\arrow[""{name=2, anchor=center, inner sep=0}, "F''"',curve={height=32pt}, from=1-1, to=1-3]
	\arrow["\xi"', shorten <=5pt, shorten >=5pt, Rightarrow, from=1, to=0]
	\arrow["\xi'"', shorten <=5pt, shorten >=5pt, Rightarrow, from=0, to=2]
\end{tikzcd}~~\stackrel{\circ}{\longmapsto}~~
\begin{tikzcd}
	{\CC} && {\CC'}
	\arrow[""{name=1, anchor=center, inner sep=0}, "F", curve={height=-32pt}, from=1-1, to=1-3]
	\arrow[""{name=2, anchor=center, inner sep=0}, "F''"', curve={height=32pt}, from=1-1, to=1-3]
	\arrow["\xi'\,\circ\,\xi"', shorten <=5pt, shorten >=5pt, Rightarrow, from=1, to=2]
\end{tikzcd}
\end{equation}
is defined by:
\begin{alignat}{2}
(\xi'\circ\xi)_U\,&:=\, \xi'_U\,\xi_U\quad&&,\label{eq:object components of vercomp}\\
(\xi'\circ\xi)_f\,&:=\, \xi'_f\,\xi_U + \xi'_{U'}\,\xi_f&&.\label{eq:hom components of vercomp pseudos}
\end{alignat}
\end{subequations}
Given pseudonatural transformations $F\xRightarrow{\xi}F':\CC\to\CC'$ and $G\xRightarrow{\theta}G':\CC'\to\CC''$, the \textbf{horizontal composite}
\begin{subequations}\label{subeq:horcomp of pseudos}
\begin{equation}
\begin{tikzcd}
	{\CC} && {\CC'} && {\CC''}
	\arrow[""{name=0, anchor=center, inner sep=0}, "F", curve={height=-32pt}, from=1-1, to=1-3]
	\arrow[""{name=1, anchor=center, inner sep=0}, "F'"', curve={height=32pt}, from=1-1, to=1-3]
	\arrow[""{name=2, anchor=center, inner sep=0}, "{G}", curve={height=-32pt}, from=1-3, to=1-5]
	\arrow[""{name=3, anchor=center, inner sep=0}, "{G'}"', curve={height=32pt}, from=1-3, to=1-5]
	\arrow["\xi\,"', shorten <=6pt, shorten >=6pt, Rightarrow, from=0, to=1]
	\arrow["{\theta}"', shorten <=6pt, shorten >=6pt, Rightarrow, from=2, to=3]
\end{tikzcd}
~~\stackrel{\ast}{\longmapsto}~~
\begin{tikzcd}
	{\CC} && {\CC''}
	\arrow[""{name=0, anchor=center, inner sep=0}, "GF", curve={height=-32pt}, from=1-1, to=1-3]
	\arrow[""{name=1, anchor=center, inner sep=0}, "G'F'"', curve={height=32pt}, from=1-1, to=1-3]
	\arrow["\theta\,\ast\,\xi\,"', shorten <=6pt, shorten >=6pt, Rightarrow, from=0, to=1]
\end{tikzcd}
\end{equation}
is defined by:
\begin{alignat}{2}
(\theta\ast\xi)_U\,&:=\,\theta_{F'(U)}\,G(\xi_U)\quad&&,\label{eq:object components of horcomp}\\
(\theta\ast\xi)_f\,&:=\,\theta_{F'(f)}\,G(\xi_U)
+\theta_{F'(U')}\,G(\xi_f)&&.\label{eq:homotoper components of horcomp pseudos}
\end{alignat}
\end{subequations}
Given pseudonatural transformations $F\xRightarrow{\xi}F':\CC\to\CC'$ and $G\xRightarrow{\theta}G':\DD\to\DD'$, the \textbf{monoidal composite}
\begin{subequations}\label{subeq:moncomp of pseudos}
\begin{equation}
\begin{tikzcd}
	{\CC} && {\CC'}
	\arrow[""{name=0, anchor=center, inner sep=0}, "F", curve={height=-32pt}, from=1-1, to=1-3]
	\arrow[""{name=1, anchor=center, inner sep=0}, "{F'}"', curve={height=32pt}, from=1-1, to=1-3]
	\arrow["\xi\,"', shorten <=6pt, shorten >=6pt, Rightarrow, from=0, to=1]
\end{tikzcd}
~~
\begin{tikzcd}
	{\DD} && {\DD'}
	\arrow[""{name=0, anchor=center, inner sep=0}, "G", curve={height=-32pt}, from=1-1, to=1-3]
	\arrow[""{name=1, anchor=center, inner sep=0}, "{G'}"', curve={height=32pt}, from=1-1, to=1-3]
	\arrow["\theta\,"', shorten <=6pt, shorten >=6pt, Rightarrow, from=0, to=1]
\end{tikzcd}
~~\stackrel{\smallbox}{\longmapsto}~~
\begin{tikzcd}
	{\CC\smallbox\DD} && {\CC'\smallbox\DD'}
	\arrow[""{name=0, anchor=center, inner sep=0}, "F\smallbox G", curve={height=-32pt}, from=1-1, to=1-3]
	\arrow[""{name=1, anchor=center, inner sep=0}, "{F'\smallbox G'}"', curve={height=32pt}, from=1-1, to=1-3]
	\arrow["\xi\smallbox\theta\,"', shorten <=6pt, shorten >=6pt, Rightarrow, from=0, to=1]
\end{tikzcd}
\end{equation}
is defined by:
\begin{alignat}{2}
(\xi\smallbox\theta)_{U,V}\,&:=\,\xi_U\boxtimes\theta_V\quad&&,\label{eq:object components of moncomp}\\
(\xi\smallbox\theta)_{f,g}\,&:=\,
\xi_f\boxtimes G'(g)\theta_V + \xi_{U'}F(f)\boxtimes \theta_g&&.\label{eq:homotoper components of moncomp pseudos}
\end{alignat}
\end{subequations}
Note that pseudonaturality \eqref{eq:dubindex is homotopy} together with the truncation allows us to equivalently write 
\begin{equation}\label{eq:alternative moncomp of pseudos}
(\xi\smallbox\theta)_{f,g}\,=\,F'(f)\xi_U\boxtimes\theta_g+
\xi_f\boxtimes\theta_{V'}G(g)\quad.
\end{equation}
\begin{defi}\label{def: strict t}
Given a symmetric strict monoidal $\Ch^{[-1,0]}$-category $(\CC,\odot,I,\gamma)$, we say a pseudonatural transformation $t:\odot\Rightarrow\odot:\CC\smallbox\CC\to\CC$ is an \textbf{infinitesimal 2-braiding} if it satisfies the \textbf{left infinitesimal hexagon relation}:
\begin{subequations}\label{subeq:t_U(VW)}
\begin{alignat}{6}
t_{U,V\odot W}=\,&t_{U,V}\odot1_W+(1_U\odot \gamma_{W,V})(t_{U,W}\odot1_V)(1_U\odot \gamma_{V,W})\quad&&,\\
t_{f,g\odot h}=\,&t_{f,g}\odot h+(1_{U'}\odot \gamma_{W',V'})(t_{f,h}\odot g)(1_U\odot \gamma_{V,W})&&,
\end{alignat}
\end{subequations}
and the \textbf{right infinitesimal hexagon relation}:
\begin{subequations}\label{subeq:t_(UV)W}
\begin{alignat}{6}
t_{U\odot V,W}=\,&1_U\odot t_{V,W}+(\gamma_{V,U}\odot1_W)(1_V\odot t_{U,W})(\gamma_{U,V}\odot1_W)\quad&&,\\
t_{f\odot g,h}=\,&f\odot t_{g,h}+(\gamma_{V',U'}\odot1_{W'})(g\odot t_{f,h})(\gamma_{U,V}\odot1_W)&&.
\end{alignat}
\end{subequations}
\end{defi}

\begin{defi}\label{def:sym t}
We say a pseudonatural transformation $t:\odot\Rightarrow\odot:\CC\smallbox\CC\to\CC$ is \textbf{$\gamma$-equivariant} if it intertwines with the symmetric braiding $\gamma$,
\begin{subequations}
\begin{alignat}{2}
\label{intertwine single-index}\gamma_{U,V}\,t_{U,V}=\,&t_{V,U}\,\gamma_{U,V}\quad&&,\\\label{intertwine homotopy}
\gamma_{U',V'}\,t_{f,g}=\,&t_{g,f}\,\gamma_{U,V}&&.
\end{alignat}
\end{subequations}
\end{defi}
\begin{lem}\label{lem:sym t then half conditions}
If $t:\odot\Rightarrow\odot:\CC\smallbox\CC\to\CC$ is $\gamma$-equivariant then \eqref{subeq:t_U(VW)}$\iff$\eqref{subeq:t_(UV)W}.
\end{lem}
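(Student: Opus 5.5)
The strategy is to conjugate each relation by the symmetric braiding $\gamma$ and use $\gamma$-equivariance to turn one hexagon relation into the other. I will prove \eqref{subeq:t_U(VW)}$\Rightarrow$\eqref{subeq:t_(UV)W}; the converse is the mirror-image argument with the roles of left and right exchanged. Three facts are needed. The first is $\gamma$-equivariance \eqref{intertwine single-index}, \eqref{intertwine homotopy}, which gives $t_{W,U\odot V}=\gamma_{U\odot V,W}\,t_{U\odot V,W}\,\gamma_{U\odot V,W}^{-1}$ and the analogous statement for homotopy components. The second is the hexagon axiom \eqref{eq:hexagon axiom of a symmetric braiding}, together with the second hexagon obtained from it by involutivity, $\gamma_{U,V\odot W}=(1_V\odot\gamma_{U,W})(\gamma_{U,V}\odot1_W)$. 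The third is dg-naturality of $\gamma$, which lets morphisms slide through $\gamma$. In $\Ho_2$ the only sign that could appear here is the Koszul sign $(-1)^{|f||g|}$ with $f,g$ both of degree $-1$, and such products vanish in the truncated product, so no signs should survive.

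For the object components, I write
\[
t_{U\odot V,W}=\gamma_{W,U\odot V}\,t_{W,U\odot V}\,\gamma_{U\odot V,W},
\]
and expand $t_{W,U\odot V}$ by the left relation as $t_{W,U}\odot1_V+(1_W\odot\gamma_{V,U})(t_{W,V}\odot1_U)(1_W\odot\gamma_{U,V})$. I then expand $\gamma_{U\odot V,W}$ and $\gamma_{W,U\odot V}$ by the hexagon axioms. In the first term, applying equivariance to $t_{W,U}$ collapses everything to $1_U\odot t_{V,W}$, up to relabeling of which factor is which. I should double-check this, because the roles may come out swapped. In the second term, the $\gamma$'s combine into $(\gamma_{V,U}\odot1_W)$ and $(\gamma_{U,V}\odot1_W)$ around $1_V\odot t_{U,W}$. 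Whichever of the two terms produces which summand of \eqref{subeq:t_(UV)W}, the sum is correct, because the right relation's two terms are exactly the two possible placements.

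The homotopy components are handled by the same computation. I start from $t_{f\odot g,h}=\gamma_{W',U'\odot V'}\,t_{h,f\odot g}\,\gamma_{U\odot V,W}$, expand with the second equation of \eqref{subeq:t_U(VW)}, and commute $f,g$ past the braidings using naturality of $\gamma$ in the form $\gamma_{V',W'}(g\odot h)=\pm(h\odot g)\gamma_{V,W}$. The main obstacle is the bookkeeping in this step: keeping track of primed versus unprimed objects and making sure the braidings attached to the source and the target objects pair up correctly. I expect the result to match the second equation of \eqref{subeq:t_(UV)W} directly, since $t_{h,f}$ conjugated by $\gamma$ becomes $t_{f,h}$ via \eqref{intertwine homotopy}.
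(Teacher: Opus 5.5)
Your strategy is the right one. The paper states this lemma without proof, since it is recalled from earlier work, and conjugating one relation by $\gamma$ is the natural argument. It uses $\gamma$-equivariance, the two hexagon identities, involutivity and naturality of $\gamma$, and it does go through. Your remark that no Koszul signs survive in $\Ho_2$ is also correct.

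However, the term matching you describe is backwards. The sentence ``whichever of the two terms produces which summand, the sum is correct, because the right relation's two terms are exactly the two possible placements'' is also not an argument. A placement of $t_{U,W}$ in the outer slots of $U\odot V\odot W$ can be written as more than one composite, and showing that these agree is exactly the step that remains to be done.

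Concretely, the term coming from $t_{W,U}\odot 1_V$ becomes, after equivariance, $(1_U\odot\gamma_{W,V})(t_{U,W}\odot 1_V)(1_U\odot\gamma_{V,W})$. This is the summand involving $t_{U,W}$, not $1_U\odot t_{V,W}$. It is also not literally the composite $(\gamma_{V,U}\odot 1_W)(1_V\odot t_{U,W})(\gamma_{U,V}\odot 1_W)$ appearing in \eqref{subeq:t_(UV)W}. To identify the two:
\begin{itemize}
\item naturality of $\gamma$ at $1_V\odot t_{U,W}$ gives $1_V\odot t_{U,W}=\gamma_{U\odot W,V}(t_{U,W}\odot 1_V)\gamma_{V,U\odot W}$;
\item the hexagons give $(\gamma_{V,U}\odot 1_W)\gamma_{U\odot W,V}=1_U\odot\gamma_{W,V}$ and $\gamma_{V,U\odot W}(\gamma_{U,V}\odot 1_W)=1_U\odot\gamma_{V,W}$.
\end{itemize}

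The term coming from $t_{W,V}\odot 1_U$ is the one that yields $1_U\odot t_{V,W}$. Naturality gives $\gamma_{W,U\odot V}(1_W\odot\gamma_{V,U})=(\gamma_{V,U}\odot 1_W)\gamma_{W,V\odot U}$ and $(1_W\odot\gamma_{U,V})\gamma_{U\odot V,W}=\gamma_{V\odot U,W}(\gamma_{U,V}\odot 1_W)$. So this term is the previous computation with $U$ and $V$ interchanged, sandwiched between $\gamma_{V,U}\odot 1_W$ and $\gamma_{U,V}\odot 1_W$. The identity above turns the middle into $(\gamma_{U,V}\odot 1_W)(1_U\odot t_{V,W})(\gamma_{V,U}\odot 1_W)$, and the outer braidings then cancel by involutivity.

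The homotopy components work the same way. The $t_{h,f}\odot g$ term yields the $g\odot t_{f,h}$ summand, and the $t_{h,g}\odot f$ term yields $f\odot t_{g,h}$. Here naturality of $\gamma$ is applied to the degree $-1$ morphism $g\odot t_{f,h}$, so this does not match ``directly'' either; the same identification step is needed. The converse is the mirror argument, as you say. With these corrections your proof is complete.
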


We recall from \cite[Definitions 17 and 18]{Joao}, Cirio and Faria Martins'\footnote{Note that, throughout their joint publications, compositions of morphisms are written from left to right.} important notions of coherency and total $\gamma$-equivariance, respectively. Actually, we specifically use the simplifications provided in \cite[Subsection 5.2]{Kemp25}. 
\begin{defi}\label{def: coherency}
A $\gamma$-equivariant infinitesimal 2-braiding is \textbf{coherent} if it satisfies
\begin{align}
t_{t_{U,V},1_W}+t_{1_U,t_{V,W}}+(1_U\odot\gamma_{W,V})t_{t_{U,W},1_V}(1_U\odot\gamma_{V,W})=0\quad.
\end{align}
A $\gamma$-equivariant infinitesimal 2-braiding is \textbf{totally $\gamma$-equivariant} if it satisfies
\begin{align}\label{left total symmetry}
t_{\gamma_{U,V},1_W}=0\quad.
\end{align}
\end{defi}

\newpage
\section{Representations of homotopy Lie algebras}\label{sec:weak reps}
Throughout this section, we fix a homotopy Lie algebra $\g$ as in Definition \ref{defi: L infinity algebra}. The aim of this section is to show that there is a symmetric monoidal dg-category (see Definition \ref{def:symmetric strict monoidal dg-category}) of representations $\Rep$ and that, given a 2-shifted Poisson structure as in Remark \ref{rem:2SPS}, one can construct a totally $\gamma$-equivariant \textit{coherent} infinitesimal 2-braiding (see Definition \ref{def: coherency}) on the homotopy 2-category $\Ho_2[\Rep]$. 
\sk

Subsection \ref{sub:intertwiners and higher morphisms} begins by first defining intertwiners between graded vector spaces then gives five different constructions of intertwiners that will be commonly used. For example, the juxtaposition of intertwiners \eqref{subeq:juxtaposition of intertwiners} will be shown to be associative and unital in Lemma \ref{lem:juxtaposition is assoc/unital}. Proposition \ref{propo:Rep is a dg-cat} defines the dg-category $\Rep$ whereas Proposition \ref{propo: sym dg} constructs the compatible symmetric monoidal structure. Subsection \ref{subsec:Poisson inf2bra} provides the representation-theoretic analogue to the construction of infinitesimal 2-braidings from 2-shifted Poisson structures in \cite[Subsection 3.2]{KLS25}; \textit{the main result of this paper is Theorem \ref{theo:2SPS induces coherent inf2bra}} where we show that such infinitesimal 2-braidings are, in particular, coherent. 
\sk

\subsection{Intertwiners, their composition and monoidal product}\label{sub:intertwiners and higher morphisms}
\begin{defi}\label{def:squiggly notation}
Given graded vector spaces $U$ and $V$ together with an integer $n\in\bbZ$, an \textbf{intertwiner} $f:U\s V[n]$ is a family of fully skew-symmetric graded linear maps:
\begin{align}\label{eq:def of intertwiner}
\Big\{f^i:\g^{\otimes\i}\otimes U\to V\big[n-\i\big]\Big\}_{i\geq1}\quad.
\end{align}
\end{defi}
For clarity, we emphasise that \textit{full} skew-symmetry of \eqref{eq:def of intertwiner} means in \textit{all} entries, e.g.
\begin{align}
f^i(x_1,x_2,x_3,\ldots,x_{\i-1},x_{\i},u)=&-(-1)^{|x_1||x_2|}f^i(x_2,x_1,x_3,\ldots,x_{\i-1},x_{\i},u)\nn\\
=&-(-1)^{|x_{\i}||u|}f^i(x_1,x_2,x_3,\ldots,x_{\i-1},u,x_{\i})\quad.
\end{align}
Analogous to Remark \ref{rem:tautological rewrite of graded linear map}, we rewrite $f:U\s V[n]$ as $f:U\s V\big[|f|\big]$; in the case that $|f|=0$, we simply write $f:U\s V$.
\begin{ex}
Definition \ref{defi: L infinity algebra} of a homotopy Lie algebra is the special case $U=V=\g$ and $f=\pi_1:\g\s\g[1]$ thus $|\pi_1|=1$, whereas the generalised Jacobi identity is the requirement $\pb{\pi_1}{\pi_1}=0:\g\s\g[2]$.
\end{ex}
Let us now provide a list of constructions of intertwiners that will be fundamental throughout:
\begin{enumerate}
\item[(i)] Given a graded vector space $U$, we define the \textbf{identity intertwiner} simply as
\begin{align}\label{eq:unit intertwiner}
\oone_U:=\left\{1_U:U\to U\right\}:U\s U\quad.
\end{align}
\item[(ii)] Given a graded vector space $U$, for every $i\geq1$ we uniquely extend 
\begin{equation}\label{eq:pi at U}
\ell^{\i}\otimes1_U:\g^{\otimes\i}\otimes U\to(\g\otimes U)\big[2-\i\big]
\end{equation}
to be fully skew-symmetric and denote the resulting intertwiner as $\ell_U:U\s(\g\otimes U)[2]$.
\item[(iii)] Given $f:U\s V\big[|f|\big]$, we define $\varrho_f:\g\otimes U\s V\big[|f|-1\big]$ as
\begin{align}\label{eq:homotoper component of varrho psnat}
\varrho_f^i:=(-1)^if^{i+1}:\g^{\otimes i}\otimes U\to V\big[|f|-i\big]\quad.
\end{align}
\item[(iv)] Given $f:U\s V\big[|f|\big]$ and $g:V\s W\big[|g|\big]$, we define their \textbf{juxtaposition}
\begin{subequations}\label{subeq:juxtaposition of intertwiners}
\begin{equation}
gf:U\s W\big[|g|+|f|\big]
\end{equation}
by setting its components as
\begin{align}
(gf)^i:=\sum_{\j+k=i}(-1)^{(|g|-\j)\k}g^j[1_{\j}\otimes f^k][\Sigma_{\j,\k}\otimes1_U]:\g^{\otimes\i}\otimes U\to W\big[|g|+|f|-\i\big]\quad.
\end{align}
\end{subequations}
\item[(v)] Given $f:U\s U'\big[|f|\big]$ and $g:V\s V'\big[|g|\big]$, we define their \textbf{odot product}
\begin{subequations}\label{eq:odot of squiggly maps}
\begin{equation}
f\odot g:U\otimes V\s(U'\otimes V')\big[|f|+|g|\big]
\end{equation}
by setting its components as
\begin{align}
(f\odot g)^i:=\sum_{\j+k=i}(-1)^{(|f|-\j)\k}[f^j\otimes g^k][1_{\j}\otimes s_{\k,U}\otimes1_V][\Sigma_{\j,\k}\otimes1_{U\otimes V}]\quad.
\end{align}
\end{subequations} 
\end{enumerate}
\begin{lem}\label{lem:juxtaposition is assoc/unital}
Juxtaposition \eqref{subeq:juxtaposition of intertwiners} is associative and unital with respect to the identities \eqref{eq:unit intertwiner}. 
\end{lem}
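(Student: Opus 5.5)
Unitality is immediate and I will dispose of it first. The identity intertwiner $\oone_U$ has only the $i=1$ component. In $(g\,\oone_U)^i$ the only surviving term is therefore $k=1$, i.e. $\k=0$ and $\j=\i$. There the shuffler $\Sigma_{\j,0}$ is the identity, the sign $(-1)^{(|g|-\j)\cdot 0}$ is trivial, and we get $g^i$. Symmetrically, in $(\oone_V f)^i$ only $j=1$, $\j=0$ survives; $\Sigma_{0,\k}$ is the identity and the sign is $(-1)^{|1|\k}=1$, since $|\oone|=0$ and $\j=0$. This gives $f^i$.

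For associativity, take $f:U\s V[|f|]$, $g:V\s W[|g|]$ and $h:W\s X[|h|]$. I will expand both $\big(h(gf)\big)^i$ and $\big((hg)f\big)^i$ as sums over triples $(j,k,l)$ with $\j+\k+\l=\i$ of terms
\[
h^j[1_{\j}\otimes g^k][1_{\j+\k}\otimes f^l]\,\Sigma\,,
\]
with $\Sigma$ some composite of shufflers on $\g^{\otimes\i}$. Expanding $h(gf)$ gives
\[
h^j\big[1_{\j}\otimes g^k[1_{\k}\otimes f^l][\Sigma_{\k,\l}\otimes 1]\big][\Sigma_{\j,\k+\l}\otimes 1_U]\,.
\]
Expanding $(hg)f$ gives
\[
h^j[1_{\j}\otimes g^k][\Sigma_{\j,\k}\otimes 1][1_{\j+\k}\otimes f^l][\Sigma_{\j+\k,\l}\otimes 1_U]\,.
\]
To compare them, in the second expression I move $f^l$ past the shuffler $\Sigma_{\j,\k}\otimes1$, which acts only on the first $\j+\k$ tensor factors of $\g$. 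This uses the interchange law for tensor products of graded maps: no sign arises because shufflers have degree $0$. Then associativity \eqref{eq:associativity of shuffler}, in its two bracketings, identifies
\[
(1_{\j}\otimes\Sigma_{\k,\l})\Sigma_{\j,\k+\l}=\Sigma_{\j,\k,\l}=(\Sigma_{\j,\k}\otimes1_{\l})\Sigma_{\j+\k,\l}\,,
\]
so the underlying maps agree term by term.

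What remains is a sign comparison, which I expect to be the only real obstacle. On the $h(gf)$ side, the sign is $(-1)^{(|g|-\k)\l}$ from the inner juxtaposition. There is also $(-1)^{(|h|-\j)(\k+\l)}$ from the outer one, where $\k+\l$ is the $\k$-index of that step since the inner arity is $k+l-1$. Finally, a Koszul sign $(-1)^{\j(|g|+|f|-\k-\l)}$ may appear when $1_{\j}\otimes(gf)^{k+l-1}$ is rewritten as $[1_{\j}\otimes g^k][1_{\j+\k}\otimes f^l]$; whether it appears depends on the convention for $1\otimes\phi$ acting on homogeneous elements. On the $(hg)f$ side, the signs are $(-1)^{(|h|-\j)\k}$, then $(-1)^{(|h|+|g|-\j-\k)\l}$, together with the analogous Koszul sign from $1_{\j+\k}\otimes f^l$ passing the degree-shifted $g^k$-output. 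I will collect exponents modulo $2$ and check that they agree, treating the Koszul convention for $(1\otimes\phi)(a\otimes b)=(-1)^{|\phi||a|}a\otimes\phi(b)$ consistently on both sides. The cross terms $\j\l$ and $|h|\l$ should match once the Koszul sign from sliding $f^l$ past the $\j$ inputs is included. If a discrepancy appears, I will recheck it by evaluating on homogeneous inputs $x_1,\ldots,x_{\i},u$, as Remark \ref{rem:shuffler restriction is well-defined} suggests.

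Finally, the composite must again be an intertwiner, i.e. fully skew-symmetric. This follows because each $(gf)^i$ is built from a sum over shuffles of fully skew-symmetric maps, so it suffices to verify skew-symmetry in adjacent pairs; but this is part of the well-definedness of juxtaposition rather than of the lemma itself.
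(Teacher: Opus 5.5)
Your proposal is the paper's proof: expand both bracketings into a triple sum over $\j+\k+\l=\i$, slide $f^l$ past the degree-zero shuffler, and identify $(1_{\j}\otimes\Sigma_{\k,\l})\Sigma_{\j,\k+\l}=\Sigma_{\j,\k,\l}=(\Sigma_{\j,\k}\otimes1_{\l})\Sigma_{\j+\k,\l}$ via \eqref{eq:associativity of shuffler}, with unitality immediate. The sign check you defer is immediate and needs no extra Koszul sign. Since $1_{\j}\otimes(\phi\psi)=(1_{\j}\otimes\phi)(1_{\j}\otimes\psi)$, each term on either side is the same composite $h^j[1_{\j}\otimes g^k][1_{\j+\k}\otimes f^l][\Sigma_{\j,\k,\l}\otimes1_U]$, and the prefactor exponents you already listed, $(|h|-\j)(\k+\l)+(|g|-\k)\l$ and $(|h|-\j)\k+(|h|+|g|-\j-\k)\l$, are equal as polynomials.
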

\begin{proof}
Suppose we are given $f:U\s V\big[|f|\big]$, $g:V\s W\big[|g|\big]$ and $h:W\s X\big[|h|\big]$ then
\begin{align}
[h(gf)]^i\overset{\eqref{subeq:juxtaposition of intertwiners}}{=}&\sum_{\j+k=i}\,\sum_{\l+m=k}(-1)^{(|h|-\j)\k+(|g|-\l)\m}h^j[1_{\j}\otimes g^l(1_{\l}\otimes f^m)(\Sigma_{\l,\m}\otimes1_U)][\Sigma_{\j,\k}\otimes1_U]\nn\\
\overset{\eqref{eq:associativity of shuffler}}{=}&\sum_{\j+\l+m=i}(-1)^{(|h|-\j)(l+m)+(|g|-\l)\m}h^j[1_{\j}\otimes g^l(1_{\l}\otimes f^m)][\Sigma_{\j,\l,\m}\otimes1_U]\nn\\
\overset{\eqref{eq:associativity of shuffler}}{=}&\sum_{\k+m=i}\,\sum_{\j+l=k}(-1)^{(|h|+|g|-\k)\m+(|h|-\j)\l}h^j[1_{\j}\otimes g^l][\Sigma_{\j,\l}\otimes1_V][1_{\k}\otimes f^m][\Sigma_{\k,\m}\otimes1_U]\nn\\\overset{\eqref{subeq:juxtaposition of intertwiners}}{=}&[(hg)f]^i\quad.\label{eq:juxtaposition is associative}
\end{align}
Unitality is obvious.
\end{proof}
\begin{lem}\label{lem:intertwining with pi maps}
Every intertwiner $f:U\s V\big[|f|\big]$ intertwines with the ell maps \eqref{eq:pi at U}, i.e. 
\begin{align}\label{eq:pi_Vf=(1xf)pi_U}
\ell_Vf=(\oone_\g\odot f)\ell_U\quad.
\end{align}
\end{lem}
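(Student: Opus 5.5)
We prove \eqref{eq:pi_Vf=(1xf)pi_U} by comparing the $i$-th components of both sides, each a graded linear map $\g^{\otimes\i}\otimes U\to(\g\otimes V)\big[|f|+2-\i\big]$. Both sides are fully skew-symmetric, so it suffices to evaluate on inputs of the form $x_1,\ldots,x_{\i},u$. Throughout we use that the ell intertwiner $\ell_V$ has components equal to the skew-symmetric extension of $\ell^{\i}\otimes1_V$, and that $\oone_\g$ has only an arity-one component $1_\g$.

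\emph{Left-hand side.} By the juxtaposition formula \eqref{subeq:juxtaposition of intertwiners}, $(\ell_Vf)^i=\sum_{\j+k=i}\pm\,\ell_V^j[1_{\j}\otimes f^k][\Sigma_{\j,\k}\otimes1_U]$. Here $\ell_V^j$ applied to $(y_1,\ldots,y_{\j},v)$, where $v=f^k(\cdots)$, is a skew-symmetrisation. Its terms are of two kinds. In the first kind, $\ell^{\j}$ eats all of the $y$'s and yields $\ell^{\j}(y_1,\ldots,y_{\j})\otimes v$. In the second kind, $v$ sits inside the first slot, giving terms of the form $\ell^{\j-1}(y\ldots,v)\otimes y_a$. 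The latter only make sense if $v\in\g$ and is fed into $\ell$. That is not our situation: the ell map is $\ell^{\i}\otimes1_U$ extended uniquely, so the output in $\g\otimes V$ always has its $V$-factor coming from the $V$-slot.

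I would therefore first make the extension explicit. The first tensor factor is $\ell$ applied to all $\g$-inputs and the $V$-entry is carried along, while full skew-symmetry merely records the Koszul signs from moving $u$ to the end. Hence $\ell_V^j(y_1,\ldots,y_{\j},v)=\ell^{\j}(y_1,\ldots,y_{\j})\otimes v$, up to the standard sign.

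\emph{Right-hand side.} We expand $(\oone_\g\odot f)\ell_U$ using \eqref{eq:odot of squiggly maps} and \eqref{subeq:juxtaposition of intertwiners}. Since $\oone_\g$ only has arity one, $(\oone_\g\odot f)^j=\pm[1_\g\otimes f^j]$ with a swap $s_{\j,\g}$ moving the first $\g$-factor of the input $\g\otimes U$ to the front. Composing with $\ell_U^k=\ell^{\k}\otimes1_U$ yields $\sum\pm\,\ell^{\k}(x_{\sigma(\cdot)})\otimes f^j(x_{\sigma(\cdot)},u)$, summed over $(\j,\k)$-shuffles. The left side similarly yields $\sum\pm\,\ell^{\j}(x_{\sigma(\cdot)})\otimes f^k(x_{\sigma(\cdot)},u)$. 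So, after relabelling $j\leftrightarrow k$ and using the symmetry relation \eqref{eq:symmetry of shuffler} together with naturality \eqref{eq:naturality of symmetric braiding}, both sides are the same sum of shuffled terms. No Jacobi-type identity is needed; the statement is purely combinatorial.

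\emph{Main obstacle.} The only real work is sign bookkeeping. We must match $(-1)^{(|\ell|-\j)\k}$ from juxtaposition, the Koszul sign of $s_{\k,U}$ and the prefactor $(-1)^{(|\oone|-\j)\k}$ in the odot product, and the Koszul sign incurred when commuting $f^k$ (of degree $|f|-\k$) past $\ell^{\j}$ (of degree $2-\j$) through naturality of $s$. The plan is to reduce both sides to a canonical normal form, $\Sigma$-shuffled inputs followed by $[\ell\otimes f]$. We then check that the exponents agree modulo $2$ by collecting the terms in $\j\k$, $|f|\j$, and $\j$, $\k$. I expect this parity check to be the step where errors creep in, so I would verify it first in low arity ($i=1,2$) before doing the general computation.
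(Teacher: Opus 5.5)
Your plan is correct and is essentially the paper's own argument. The paper expands $[(\oone_\g\odot f)\ell_U]^i$ using $(\oone_\g\odot f)^j=(1_\g\otimes f^j)(s_{\j,\g}\otimes1_U)$, moves $\ell^{\k}$ to the front by naturality of $s$ and the shuffler symmetry \eqref{eq:symmetry of shuffler} (producing $(-1)^{\j\k}\Sigma_{\k,\j}$), lets the interchange sign $(-1)^{(|f|-\j)\k}$ cancel the juxtaposition prefactor, and is left with $\sum_{j+\k=i}(-1)^{\j\k}[\ell^{\k}\otimes f^j][\Sigma_{\k,\j}\otimes1_U]=(\ell_Vf)^i$. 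Also, $\ell_V^j=\ell^{\j}\otimes1_V$ holds on the nose with no extra sign, so your ``second kind'' of terms never arises and the left-hand side needs no further simplification.
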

\begin{proof}
First note that
\begin{equation}\label{eq:(oone_g odot f)^i}
(\oone_\g\odot f)^i\overset{\eqref{eq:odot of squiggly maps},\eqref{eq:unit intertwiner}}{=}(1_\g\otimes f^i)(s_{\i,\g}\otimes1_U)
\end{equation}
thus
\begin{align*}
[(\oone_\g\odot f)\ell_U]^i\overset{\eqref{subeq:juxtaposition of intertwiners}}{=}\qquad&\sum_{\j+k=i}(-1)^{(|f|-\j)\k}[1_\g\otimes f^j][s_{\j,\g}\otimes1_U][1_{\j}\otimes\ell^{\k}\otimes1_U][\Sigma_{\j,\k}\otimes1_U]\\
\overset{\eqref{eq:naturality of symmetric braiding},\eqref{eq:symmetry of shuffler},\eqref{eq:interchange for tensor product}}{=}&\sum_{j+\k=i}(-1)^{\j\k}[\ell^{\k}\otimes f^j][\Sigma_{\k,\j}\otimes1_U]\\
\overset{\eqref{subeq:juxtaposition of intertwiners}}{=}\qquad&(\ell_Vf)^i\quad.\numberthis
\end{align*}
\end{proof}
\begin{lem}\label{lem:varrho map is a derivation}
Given $f:U\s V\big[|f|\big]$ and $g:V\s W\big[|g|\big]$, the varrho map \eqref{eq:homotoper component of varrho psnat} satisfies
\begin{align}\label{eq:varrho is associative}
\varrho_{gf}=\varrho_g(\oone_\g\odot f)+(-1)^{|g|}g\varrho_f\quad.
\end{align}
\end{lem}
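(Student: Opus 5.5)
The plan is to compare both sides componentwise: fix $i\geq1$ and show that the $i$-th components of the two sides agree as maps $\g^{\otimes i}\otimes U\to W\big[|g|+|f|-1-\i\big]$. We start from the left-hand side. By \eqref{eq:homotoper component of varrho psnat} and \eqref{subeq:juxtaposition of intertwiners},
\begin{equation*}
\varrho_{gf}^i=(-1)^i(gf)^{i+1}=(-1)^i\sum_{\j+k=i+1}(-1)^{(|g|-\j)\k}g^j[1_{\j}\otimes f^k][\Sigma_{\j,\k}\otimes1_U]\quad,
\end{equation*}
where the shuffler $\Sigma_{\j,\k}$ now acts on $i$ copies of $\g$. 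The first copy of $\g$ plays the role of the distinguished input of $\varrho$. The idea is to split each $(\j,\k)$-shuffle according to where this first copy lands.

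The first case is when it lands among the $\j$ inputs of $g$. Using the recursion \eqref{eq:unitality of shuffler} in its right-hand form, $\Sigma_{\j,\k}=1_\g\otimes\Sigma_{\j-1,\k}+(\ldots)$, these terms give $g^j$ with first argument that copy of $\g$ and the rest shuffled. Reindexing $j\mapsto j-1$ and using $\varrho_g^{j-1}=(-1)^{j-1}g^j$, these should assemble into $\varrho_g(\oone_\g\odot f)$. Here one uses the formula \eqref{eq:(oone_g odot f)^i}, $(\oone_\g\odot f)^k=(1_\g\otimes f^k)(s_{\k,\g}\otimes1_U)$: it keeps the extra $\g$ outside $f$ and commutes it past the other $\g$'s. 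The Koszul signs from $s_{\k,\g}$ are matched using \eqref{eq:symmetry of shuffler} and \eqref{eq:naturality of symmetric braiding}.

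The second case is when the first copy lands among the $\k$ inputs of $f$. The second summand of \eqref{eq:unitality of shuffler} moves it, via $s_{\g,\j}$, past the $g$-block. Full skew-symmetry of $f^k$ then brings it to the front of $f$'s arguments, and these terms reorganise into $g^j[1_{\j}\otimes\varrho_f^{k-1}]$. This is $g\varrho_f$ computed with $|\varrho_f|=|f|-1$ and $k\mapsto k-1$ in the juxtaposition formula.

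The main obstacle is the sign bookkeeping. We must check three things:
\begin{enumerate}
\item[(a)] the prefactor $(-1)^i(-1)^{(|g|-\j)\k}$ combined with $(-1)^{\j}$ from \eqref{eq:unitality of shuffler} matches $(-1)^{\j-1}$ from $\varrho_g$ times the juxtaposition sign $(-1)^{(|g|-\j+1)\k}$;
\item[(b)] in the second case, the leftover sign equals $(-1)^{|g|}$ times $(-1)^{k-1}$ from $\varrho_f$ times $(-1)^{(|g|-\j)(\k-1)}$;
\item[(c)] moving the $\g$-element past the $g$-inputs, together with the interchange sign \eqref{eq:interchange for tensor product} of $1_{\j}\otimes f^k$ against a degree-shifted map, gives exactly the needed parity.
\end{enumerate}
I would do these as parity checks mod 2, using that $\g$-inputs carry an effective odd degree in the skew-symmetric convention.

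As a final consistency check, I would test the identity at $i=1$. There $\varrho_{gf}^1=-(gf)^2=-(g^2(1\otimes f^1)+(-1)^{|g|-1}g^1f^2)$, and this should equal $\varrho^1_g(1_\g\otimes f^1)+(-1)^{|g|}g^1\varrho_f^1$. It does, since $\varrho^1_g=-g^2$ and $\varrho_f^1=-f^2$. This fixes the conventions before running the general index computation.
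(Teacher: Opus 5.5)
\paragraph{Main issue: the wrong copy of $\g$ is distinguished.} Your overall plan is the paper's plan: compare $i$-th components, expand $\varrho^i_{gf}=(-1)^i(gf)^{i+1}$, and split $\Sigma_{\j,\k}$ via \eqref{eq:unitality of shuffler} according to where one distinguished $\g$-input lands. But you have distinguished the wrong input, and with your choice the identification step fails.

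The $\g$ consumed by $\varrho$ is the tensor factor $\g$ of the source $\g\otimes U$. That is the \emph{last} copy in $\g^{\otimes\i}\otimes\g\otimes U$, adjacent to $U$. Two places show this:
\begin{itemize}
\item In \eqref{eq:(oone_g odot f)^i}, $s_{\k,\g}$ pulls exactly this last factor to the front. So in $\varrho_g(\oone_\g\odot f)$ it lands in the \emph{last} $\g$-slot of $g^j$.
\item In the juxtaposition $g\varrho_f$, the shuffler acts only on the first $\i$ copies. The last copy travels with $U$ into the last $\g$-slot of $f^k$.
\end{itemize}
Splitting off $x_1$ groups the terms differently. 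Take $i=2$ with inputs $x_1,x_2,u$. Your ``$x_1$ goes to $g$'' piece contains, up to sign, $g^2(x_1,f^2(x_2,u))$. By contrast, $[\varrho_g(\oone_\g\odot f)]^2$ contains $g^2(x_2,f^2(x_1,u))$, and the former term actually belongs to $(-1)^{|g|}[g\varrho_f]^2$. Since $g^2$ and $f^2$ each have a single $\g$-slot, no skew-symmetry relates these two terms. So your two pieces are not the two summands of the right-hand side; only the totals agree. Rescuing your split would require first proving that $(gf)^{i+1}$ is skew-symmetric in all its $\g$-inputs and then cyclically moving $x_1$ to the end, which is unnecessary.

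\paragraph{The fix.} Use the other (left) form of \eqref{eq:unitality of shuffler}, which peels off the last input:
$\Sigma_{\j,\k}\otimes1_U=\Sigma_{\j,\k-1}\otimes1_{\g U}+(-1)^{\k}[1_{\j-1}\otimes s_{\k,\g}\otimes1_U][\Sigma_{\j-1,\k}\otimes1_{\g U}]$.
Multiply by the prefactor $(-1)^{i}(-1)^{(|g|-\j)\k}$ with $i=\j+\k$.
\begin{itemize}
\item The first summand carries sign $(-1)^{\j+(|g|-j)\k}$ and is literally $(-1)^{|g|}[g\varrho_f]^i$. This uses $\varrho_f^{k-1}=(-1)^{\k}f^k$ and the juxtaposition sign $(-1)^{(|g|-\j)(\k-1)}$.
\item The second summand carries sign $(-1)^{\j+(|g|-\j)\k}$ and is literally $[\varrho_g(\oone_\g\odot f)]^i$. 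This uses $\varrho_g^{j-1}=(-1)^{\j}g^j$, $|\varrho_g|=|g|-1$, and the fact that its $s_{\k,\g}$ is exactly the one in \eqref{eq:(oone_g odot f)^i}.
\end{itemize}
No skew-symmetry of $f^k$ or $g^j$ is needed; this is the paper's two-line proof.

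\paragraph{The $i=1$ check.} It contains a sign slip. For $(j,k)=(1,2)$ one has $\j=0$ and $\k=1$, so $(gf)^2=g^2(1_\g\otimes f^1)+(-1)^{|g|}g^1f^2$, not $(-1)^{|g|-1}$. With your sign the two sides would disagree in the $g^1f^2$ term, so your claim that they agree is not justified as written; with the correct sign they do agree. Note also that $i=1$ has only one $\g$-input, so it cannot detect the first-versus-last issue. The first informative case is $i=2$.
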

\begin{proof}
Recalling the components \eqref{eq:(oone_g odot f)^i} of $\oone_\g\odot f$, we have
\begin{align*}
\varrho_{gf}^i\overset{\eqref{subeq:juxtaposition of intertwiners}}{=}&(-1)^i\sum_{\j+\k=i}(-1)^{(|g|-\j)\k}g^j[1_{\j}\otimes f^k][\Sigma_{\j,\k}\otimes1_U]\\
\overset{\eqref{eq:unitality of shuffler}}{=}&\sum_{\j+\k=i}(-1)^{\j+(|g|-\j)\k}g^j[1_{\j}\otimes f^k][1_{\j-1}\otimes s_{\k,\g}\otimes1_U][\Sigma_{\j-1,\k}\otimes1_{\g U}]\\&\qquad+(-1)^{\j+(|g|-j)\k}g^j[1_{\j}\otimes f^k][\Sigma_{\j,\k-1}\otimes1_{\g U}]\\
\overset{\eqref{subeq:juxtaposition of intertwiners}}{=}&\big[\varrho_g(\oone_\g\odot f)+(-1)^{|g|}g\varrho_f\big]^i\quad.\numberthis
\end{align*}
\end{proof}
\begin{cor}\label{cor:[rho,cdot] is derivation}
Given $f:U\s V\big[|f|\big]$, $\rho_U:U\s U[1]$ and $\rho_V:V\s V[1]$, if we define
\begin{align}\label{eq:diff of squiggly map}
\pb{\rho}{f}:=\rho_Vf-(-1)^{|f|}f\rho_U-\varrho_f\ell_U:U\s V\big[|f|+1\big]
\end{align}
then the map $\pb{\rho}{\cdot\,}$ is a derivation of the juxtaposition \eqref{subeq:juxtaposition of intertwiners}, i.e. given also $g:V\s W\big[|g|\big]$ and $\rho_W:W\s W[1]$,
\begin{equation}\label{eq:[rho,gf]=...}
\pb{\rho}{gf}=\pb{\rho}{g}f+(-1)^{|g|}g\pb{\rho}{f}\quad.
\end{equation}
\end{cor}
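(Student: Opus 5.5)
We expand both sides of \eqref{eq:[rho,gf]=...} using the definition \eqref{eq:diff of squiggly map} and sort the resulting terms into two groups: the terms with a $\rho$, and the terms with a $\varrho$ composed with an ell map. The first group matches by associativity alone. The left-hand side contributes $\rho_W(gf)-(-1)^{|g|+|f|}(gf)\rho_U$. The right-hand side contributes
\begin{equation*}
(\rho_Wg)f-(-1)^{|g|}(g\rho_V)f+(-1)^{|g|}g(\rho_Vf)-(-1)^{|g|+|f|}g(f\rho_U)\quad.
\end{equation*}
By Lemma \ref{lem:juxtaposition is assoc/unital} we may drop the brackets. The two middle terms then cancel, leaving exactly the $\rho$-terms of the left-hand side. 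This step needs no further input.

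For the second group, the left-hand side contains $-\varrho_{gf}\ell_U$. We rewrite it with Lemma \ref{lem:varrho map is a derivation} as
\begin{equation*}
-\varrho_g(\oone_\g\odot f)\ell_U-(-1)^{|g|}g\varrho_f\ell_U\quad.
\end{equation*}
Lemma \ref{lem:intertwining with pi maps} gives $(\oone_\g\odot f)\ell_U=\ell_Vf$, so after reassociating with Lemma \ref{lem:juxtaposition is assoc/unital} the first term becomes $-(\varrho_g\ell_V)f$. On the right-hand side, the $\varrho$-terms are $-(\varrho_g\ell_V)f$ from $\pb{\rho}{g}f$ and $-(-1)^{|g|}g(\varrho_f\ell_U)$ from $(-1)^{|g|}g\pb{\rho}{f}$. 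These are exactly the two terms just obtained, which proves the corollary.

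The only point needing care is that the lemmas are used in the form in which they were stated. The juxtapositions of intertwiners involving shifted spaces such as $\g\otimes U$ must still be the juxtaposition \eqref{subeq:juxtaposition of intertwiners}. The degree bookkeeping must also be right: $|\varrho_f|=|f|-1$ and $|\ell_U|=2$, so $\varrho_f\ell_U$ has degree $|f|+1$, consistent with the other terms. Granting that, no sign computations on components are needed, since all signs are already absorbed into Lemmas \ref{lem:juxtaposition is assoc/unital}, \ref{lem:intertwining with pi maps} and \ref{lem:varrho map is a derivation}.
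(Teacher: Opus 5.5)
Your proposal is correct and is essentially the paper's proof: expand both sides with \eqref{eq:diff of squiggly map}, cancel the two $g\rho_Vf$ terms by associativity of juxtaposition, and match the $\varrho$-terms using Lemma \ref{lem:varrho map is a derivation} together with $\ell_Vf=(\oone_\g\odot f)\ell_U$ from Lemma \ref{lem:intertwining with pi maps}. The only difference is direction. The paper starts from the right-hand side and assembles $-\varrho_g(\oone_\g\odot f)\ell_U-(-1)^{|g|}g\varrho_f\ell_U$ into $-\varrho_{gf}\ell_U$, whereas you split $-\varrho_{gf}\ell_U$ apart; the ingredients are identical.
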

\begin{proof}
Let us expand the RHS of \eqref{eq:[rho,gf]=...},
\begin{align*}
\pb{\rho}{g}f+(-1)^{|g|}g\pb{\rho}{f}\overset{\eqref{eq:diff of squiggly map}}{=}\big(&\rho_Wg-(-1)^{|g|}g\rho_V-\varrho_g\ell_V\big)f+(-1)^{|g|}g\big(\rho_Vf-(-1)^{|f|}f\rho_U-\varrho_f\ell_U\big)\\
\overset{\eqref{eq:juxtaposition is associative},\eqref{eq:pi_Vf=(1xf)pi_U}}{=}&\rho_Wgf-(-1)^{|gf|}gf\rho_U-\varrho_g(\oone_\g\odot f)\ell_U-(-1)^{|g|}g\varrho_f\ell_U\\
\overset{\eqref{eq:varrho is associative}}{=}\,\,~&\rho_Wgf-(-1)^{|gf|}gf\rho_U-\varrho_{gf}\ell_U\\
\overset{\eqref{eq:diff of squiggly map}}{=}\,\,~&\pb{\rho}{gf}\quad.\numberthis\label{eq:comp of squig maps is cochain}
\end{align*}
\end{proof}
\begin{defi}\label{defi: weak rep}
Within the context of Corollary \ref{cor:[rho,cdot] is derivation}:
\begin{enumerate}
\item[(i)] If $|f|=0$ then we say that the intertwiner $f:U\s V$ is an \textbf{equivariant map} if 
\begin{equation}
\pb{\rho}{f}=0\quad,
\end{equation}
i.e.
\begin{equation}\label{eq:equivariant as juxtaposition}
f\rho_U+\varrho_f\ell_U=\rho_Vf\quad.
\end{equation}
\item[(ii)] We say $V=(V,\rho_V)$ is a \textbf{representation} of $\g$ if $\rho_V:V\s V[1]$ satisfies
\begin{equation}\label{eq:[rho,rho_V]}
\pb{\rho}{\rho_V}=\rho_V\rho_V\quad,
\end{equation}
i.e. 
\begin{equation}\label{eq:rho_Vrho_V=..}
\rho_V\rho_V=\varrho_{\rho_V}\ell_V\quad.
\end{equation}
\end{enumerate}
\end{defi}
The following remark shows that Definition \ref{defi: weak rep} is equivalent to \cite[Definitions 2.2 and 3.3]{Allocca}.
\begin{rem}\label{rem:comparison to Allocca}
By using Definition \ref{def:squiggly notation}, the intertwiner $\rho_V:V\s V[1]$ is a family of fully skew-symmetric graded linear maps
\begin{subequations}\label{subeq:Allocca rep}
\begin{align}\label{maps of weak rep}
\left\{\rho_V^i:\g^{\otimes\i}\otimes V\to V[2-i]\right\}_{i\geq1}
\end{align}
whereas the fact that $V$ is a representation \eqref{eq:rho_Vrho_V=..} means we must have, $\forall i\geq1$,
\begin{align}\label{eq:rewritten weak action property}
\sum_{\j+k=i}(-1)^{\j\k}\rho^j_V(1_{\j-1}\otimes\pi_1^k\otimes1_V)(\Sigma_{\j-1,k}\otimes1_V)+(-1)^{j\k}\rho^j_V(1_{\j}\otimes\rho^k_V)(\Sigma_{\j,\k}\otimes1_V)=0\quad.
\end{align}
\end{subequations}
Given $x_1,\ldots,x_k\in\g$, we can set $\rho_V^k(x_1,\ldots,x_k):=\pi_1^k(x_1,\ldots,x_k)$ and thus use \eqref{eq:unitality of shuffler} to rewrite \eqref{eq:rewritten weak action property} as 
\begin{align}\label{generalised action property}
\forall i\geq1\,,\qquad\sum_{\j+k=i}(-1)^{j\k}\rho_V^j(1_{\j}\otimes\rho_V^k)\Sigma_{\j,k}=0\quad.
\end{align}
In fact, we can further compactify \eqref{generalised action property} as $\pb{\rho_V}{\rho_V}=0$ where, this time, the bracket is genuinely the Schouten-Nijenhuis bracket \eqref{eq:def of Schouten by bullet}. Likewise, for $\rho_U\neq f\neq\rho_V$, we can set $f^k(x_1,\ldots,x_k):=0$ so that juxtaposition \eqref{subeq:juxtaposition of intertwiners} can be rewritten as
\begin{align}
(gf)^i=\sum_{\j+k=i}(-1)^{(|g|-\j)\k}g^j[1_{\j}\otimes f^k]\Sigma_{\j,k}\overset{\eqref{eq:bullet composition operation}}{=}\sum_{\j+k=i}g^j\bullet f^k\quad,
\end{align}
i.e. we actually have $gf=g\bullet f$. Putting both ideas together, we can rewrite
\begin{align}\label{eq:arity component of [rho,f]}
\pb{\rho}{f}^i\overset{\eqref{eq:diff of squiggly map}}{=}\sum_{\j+k=i}&(-1)^{j\k}\rho_V^j[1_{\j}\otimes f^k][\Sigma_{\j,\k}\otimes1_U]-(-1)^{|f|}(-1)^{(|f|-\j)\k}f^j(1_{\j}\otimes\rho_U^k)(\Sigma_{\j,\k}\otimes1_U)\nn\\&-(-1)^{|f|}(-1)^{(|f|-j)\k}f^j(1_{\j-1}\otimes\pi_1^k\otimes1_U)(\Sigma_{\j-1,k}\otimes1_U)\\\overset{\eqref{eq:unitality of shuffler}}{=}\sum_{\j+k=i}&(-1)^{j\k}\rho_V^j[1_{\j}\otimes f^k]\Sigma_{\j,k}-(-1)^{|f|}(-1)^{(|f|-\j)\k}f^j(1_{\j}\otimes\rho_U^k)\Sigma_{\j,k}\nn
\end{align}
as 
\begin{equation}
\pb{\rho}{f}\overset{\eqref{eq:bullet composition operation}}{=}\rho_V\bullet f-(-1)^{|f|}f\bullet\rho_U
\end{equation}
where, once again, the bracket is genuinely the Schouten-Nijenhuis bracket \eqref{eq:def of Schouten by bullet}. Finally, we say that
\begin{align}\label{morph between weak rep}
f=\left\{f^i:\g^{\otimes\i}\otimes U\to V[1-i]\right\}_{i\geq1}:U\s V
\end{align}
is an equivariant map \eqref{eq:equivariant as juxtaposition} if $\rho_V\bullet f=f\bullet\rho_U$.
\end{rem}
\begin{ex}\label{ex:adjoint representation}
\eqref{generalised action property} makes it obvious that any homotopy Lie algebra $(\g,\pi_1)$ forms a representation of itself with the action maps given by $\rho_\g^i=\pi^i_1$, i.e. the \textbf{adjoint representation}.
\end{ex}
Defining
\begin{equation}\label{eq:def of varrho^i_V}
\rho^i_V:=(-1)^i\varrho^{\i}_V
\end{equation}
then, analogously to Remark \ref{rem:low arity data of L_inf-algebra}, $\varrho_V^0:V\to V[1]$ is a differential structure on $V$ whereas the cochain map $\varrho_V^1:\g\otimes V\to V$ satisfies the usual action property up to the internal hom differential of the \textbf{Jacobiactor} $\varrho_V^2:\g^{\otimes2}\otimes V\to V[-1]$.
\begin{lem}\label{lem:[rho,cdot] squares to zero}
Within the context of Corollary \ref{cor:[rho,cdot] is derivation}, if $U=(U,\rho_U)$ and $V=(V,\rho_V)$ are representations as in Definition \ref{defi: weak rep} then
\begin{equation}
\pb{\rho}{\pb{\rho}{f}}=0\quad.
\end{equation}
\end{lem}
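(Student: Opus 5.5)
We expand $\pb{\rho}{\pb{\rho}{f}}$ directly from \eqref{eq:diff of squiggly map}, treating $\pb{\rho}{f}$ as an intertwiner $U\s V[|f|+1]$. This gives
\begin{align*}
\pb{\rho}{\pb{\rho}{f}}=\rho_V\pb{\rho}{f}+(-1)^{|f|}\pb{\rho}{f}\rho_U-\varrho_{\pb{\rho}{f}}\ell_U\quad.
\end{align*}
Substituting $\pb{\rho}{f}=\rho_Vf-(-1)^{|f|}f\rho_U-\varrho_f\ell_U$ and using associativity of juxtaposition (Lemma \ref{lem:juxtaposition is assoc/unital}), the two cross terms $\mp\rho_Vf\rho_U$ cancel. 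The remaining terms are
\begin{align*}
\rho_V\rho_Vf-f\rho_U\rho_U-\rho_V\varrho_f\ell_U-(-1)^{|f|}\varrho_f\ell_U\rho_U-\varrho_{\pb{\rho}{f}}\ell_U\quad.
\end{align*}
Using the representation condition \eqref{eq:rho_Vrho_V=..}, we replace $\rho_V\rho_V$ by $\varrho_{\rho_V}\ell_V$ and $\rho_U\rho_U$ by $\varrho_{\rho_U}\ell_U$. Then Lemma \ref{lem:intertwining with pi maps} rewrites $\ell_Vf=(\oone_\g\odot f)\ell_U$. After this, every term is visibly of the form $(\cdots)\ell_U$ except $-(-1)^{|f|}\varrho_f\ell_U\rho_U$.

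The next step is to compute $\varrho_{\pb{\rho}{f}}$ using Lemma \ref{lem:varrho map is a derivation}, applied to each juxtaposition in $\pb{\rho}{f}$. This gives
\begin{align*}
\varrho_{\rho_Vf}&=\varrho_{\rho_V}(\oone_\g\odot f)-\rho_V\varrho_f\quad,\\
\varrho_{f\rho_U}&=\varrho_f(\oone_\g\odot\rho_U)+(-1)^{|f|}f\varrho_{\rho_U}\quad,\\
\varrho_{\varrho_f\ell_U}&=\varrho_{\varrho_f}(\oone_\g\odot\ell_U)+(-1)^{|f|-1}\varrho_f\varrho_{\ell_U}\quad.
\end{align*}
Substituting, the terms $\varrho_{\rho_V}(\oone_\g\odot f)\ell_U$, $\rho_V\varrho_f\ell_U$ and $f\varrho_{\rho_U}\ell_U$ cancel against those from the first step. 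What remains is
\begin{align*}
-(-1)^{|f|}\varrho_f\big[\ell_U\rho_U-(\oone_\g\odot\rho_U)\ell_U\big]+\varrho_{\varrho_f}(\oone_\g\odot\ell_U)\ell_U-(-1)^{|f|}\varrho_f\varrho_{\ell_U}\ell_U\quad.
\end{align*}
The bracketed term vanishes by Lemma \ref{lem:intertwining with pi maps} applied to $\rho_U$.

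So it remains to show the identity
\begin{align*}
\varrho_{\varrho_f}(\oone_\g\odot\ell_U)\ell_U=(-1)^{|f|}\varrho_f\varrho_{\ell_U}\ell_U\quad,
\end{align*}
and this is the \textbf{main obstacle}, since it is the only place where the generalised Jacobi identity \eqref{eq:generalised Jacobi identity} and the skew-symmetry of intertwiners must enter. I would verify it componentwise. Both sides are built from $f^{j+2}$ or $f^{j+1}$ composed with two $\ell$'s.
\begin{itemize}
\item On the left, $(\varrho_{\varrho_f})^i=(-1)^i(-1)^{i+1}f^{i+2}$, fed two $\ell$-outputs. This is a double insertion of $\ell^{a}\otimes\ell^{b}$ into the first slots of $f$. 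Full graded skew-symmetry of $f^{i+2}$ in its two $\g$-slots should make this vanish, because the two brackets have odd total shifted degree and the shuffle sum pairs them antisymmetrically.
\item On the right, $\varrho_{\ell_U}\ell_U$ is, componentwise, a nested $\ell^{j}(\ell^{k}\otimes1)\Sigma$ in the first tensor factor. Summed over arities it reduces to the generalised Jacobi identity \eqref{eq:generalised Jacobi identity} (equivalently $\pb{\pi_1}{\pi_1}=0$), hence vanishes.
\end{itemize}
If the sign bookkeeping makes the two sides combine rather than vanish separately, I would fall back on the alternative from Remark \ref{rem:comparison to Allocca}. There one extends $\rho_U,\rho_V$ by $\pi_1$ on $\g$-inputs and writes $\pb{\rho}{f}=\rho_V\bullet f-(-1)^{|f|}f\bullet\rho_U$. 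Since \eqref{generalised action property} gives $\rho_U\bullet\rho_U=0=\rho_V\bullet\rho_V$, the claim becomes the statement that the commutator with a square-zero element of a pre-Lie-type (bullet) product squares to zero. That reduces to checking that the associator of $\bullet$ is symmetric in the relevant pair of arguments, as in \cite{KLS24}.
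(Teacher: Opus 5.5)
Your reduction matches the paper's. You expand with \eqref{eq:diff of squiggly map}, cancel the cross terms $\pm\rho_Vf\rho_U$, and use \eqref{eq:rho_Vrho_V=..}, Lemma \ref{lem:intertwining with pi maps} (for $f$ and for $\rho_U$) and Lemma \ref{lem:varrho map is a derivation}. This leaves exactly $\varrho_{\varrho_f}(\oone_\g\odot\ell_U)\ell_U-(-1)^{|f|}\varrho_f\varrho_{\ell_U}\ell_U$. The second term dies, as in the paper, because $\varrho_{\ell_U}\ell_U$ is, up to sign, the generalised Jacobi identity tensored with $1_U$.

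The difference is the first term. The paper asserts that $(\oone_\g\odot\ell_U)\ell_U=0$ is another rewriting of the Jacobi identity. You claim only that it is annihilated by $\varrho_{\varrho_f}$ because $f$ is skew-symmetric, and yours is the right mechanism. Using \eqref{eq:naturality of symmetric braiding}, \eqref{eq:interchange for tensor product} and \eqref{eq:symmetry of shuffler}, the components of $(\oone_\g\odot\ell_U)\ell_U$ on $\g^{\otimes n}\otimes U$ are $\sum_{a+b=n}(-1)^{ab}(\ell^a\otimes\ell^b)\Sigma_{a,b}\otimes1_U$. These involve no nested brackets and need not vanish. For $\g=\mathfrak{sl}_2\oplus\mathfrak{sl}_2$ in degree $0$, with $[x_k,y_k]=z_k$ in the $k$-th copy, the inputs $x_1,y_1,x_2,y_2,u$ give $(z_1\otimes z_2+z_2\otimes z_1)\otimes u$.

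What does hold is that this expression is invariant under post-composition with $\gamma_{\g,\g}\odot\oone_U$. On the other side, $\varrho_{\varrho_f}(\gamma_{\g,\g}\odot\oone_U)=-\varrho_{\varrho_f}$, because $(\varrho_{\varrho_f})^i=-f^{i+2}$ is graded skew in its last two $\g$-slots (compare \eqref{eq:varrho is skew-symmetric}). Hence the two terms vanish separately. Your bullet-product fallback is unnecessary, and the sign check you flagged reduces to that $\gamma_{\g,\g}$-invariance.
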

\begin{proof}
We make use of Lemmas \ref{lem:juxtaposition is assoc/unital}, \ref{lem:intertwining with pi maps} and \ref{lem:varrho map is a derivation},
\begin{align*}
\pb{\rho}{\pb{\rho}{f}}\overset{\eqref{eq:diff of squiggly map}}{=}&\pb{\rho}{\rho_Vf}-(-1)^{|f|}\pb{\rho}{f\rho_U}-\pb{\rho}{\varrho_f\ell_U}\\
\overset{\eqref{eq:diff of squiggly map}}{=}&\rho_V\rho_Vf+(-1)^{|f|}\rho_Vf\rho_U-\varrho_{\rho_Vf}\ell_U-(-1)^{|f|}\rho_Vf\rho_U-f\rho_U\rho_U+(-1)^{|f|}\varrho_{f\rho_U}\ell_U\\&-\rho_V\varrho_f\ell_U-(-1)^{|f|}\varrho_f\ell_U\rho_U+\varrho_{\varrho_f\ell_U}\ell_U\\
\overset{\eqref{eq:varrho is associative}}{=}&\rho_V\rho_Vf-\varrho_{\rho_V}(\oone_\g\odot f)\ell_U+\rho_V\varrho_f\ell_U-f\rho_U\rho_U+(-1)^{|f|}\varrho_f(\oone_\g\odot\rho_U)\ell_U+f\varrho_{\rho_U}\ell_U\\&-\rho_V\varrho_f\ell_U-(-1)^{|f|}\varrho_f\ell_U\rho_U+\varrho_{\varrho_f}(\oone_\g\odot\ell_U)\ell_U-(-1)^{|f|}\varrho_f\varrho_{\ell_U}\ell_U\\
\overset{\eqref{eq:rho_Vrho_V=..}}{=}&\varrho_{\rho_V}\ell_Vf-\varrho_{\rho_V}(\oone_\g\odot f)\ell_U+(-1)^{|f|}\varrho_f(\oone_\g\odot\rho_U)\ell_U-(-1)^{|f|}\varrho_f\ell_U\rho_U\\&+\varrho_{\varrho_f}(\oone_\g\odot\ell_U)\ell_U-(-1)^{|f|}\varrho_f\varrho_{\ell_U}\ell_U\\
\overset{\eqref{eq:pi_Vf=(1xf)pi_U}}{=}\,&\varrho_{\varrho_f}(\oone_\g\odot\ell_U)\ell_U-(-1)^{|f|}\varrho_f\varrho_{\ell_U}\ell_U\numberthis\label{eq:proof that [rho,] squares to zero}
\end{align*}
however we already have $(\oone_\g\odot\ell_U)\ell_U=0$ and $\varrho_{\ell_U}\ell_U=0$ because these are simply different ways of rewriting the generalised Jacobi identity \eqref{eq:generalised Jacobi identity}.
\end{proof}
\begin{propo}\label{propo:Rep is a dg-cat}
There is a dg-category $\Rep$ with:
\begin{enumerate}
\item[(i)] Objects given by representations $V=(V,\rho_V)$ as in \eqref{subeq:Allocca rep}.
\item[(ii)] Morphisms given by intertwiners as in Definition \ref{def:squiggly notation}.
\item[(iii)] Composition given by juxtaposition \eqref{subeq:juxtaposition of intertwiners} and hom differential given by \eqref{eq:diff of squiggly map}.
\item[(iv)] Units given by the identity intertwiners \eqref{eq:unit intertwiner}.
\end{enumerate}
\end{propo}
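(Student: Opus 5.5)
The plan is to assemble Proposition \ref{propo:Rep is a dg-cat} from the lemmas already proved, checking the dg-category axioms one at a time. First, for representations $U=(U,\rho_U)$ and $V=(V,\rho_V)$, I will define the hom-complex $\Rep[U,V]$ as the graded vector space whose degree-$m$ part consists of intertwiners $f:U\s V[m]$ (Definition \ref{def:squiggly notation}). The proposed differential is $\partial f:=\pb{\rho}{f}$ from \eqref{eq:diff of squiggly map}, which has degree $+1$. That it squares to zero is exactly Lemma \ref{lem:[rho,cdot] squares to zero}, and this is where the representation condition \eqref{eq:rho_Vrho_V=..} on $U$ and $V$ enters. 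So each $\Rep[U,V]$ is a cochain complex.

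Second, composition is juxtaposition \eqref{subeq:juxtaposition of intertwiners}. It is additive in each variable and of degree $|g|+|f|$, so it defines a graded linear map $\Rep[V,W]\otimes\Rep[U,V]\to\Rep[U,W]$. Lemma \ref{lem:juxtaposition is assoc/unital} gives associativity and unitality with respect to $\oone_U$. That composition is a cochain map for the tensor product differential \eqref{eqn:Chtensor} is precisely the Leibniz rule \eqref{eq:[rho,gf]=...} of Corollary \ref{cor:[rho,cdot] is derivation}, applied with the $\rho$'s being the representation structures.

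Third, I must check that the units are $0$-cocycles, i.e. $\pb{\rho}{\oone_U}=0$. Expanding \eqref{eq:diff of squiggly map} gives $\rho_U\oone_U-\oone_U\rho_U-\varrho_{\oone_U}\ell_U$. The first two terms cancel by unitality. For the last term, note that $\oone_U$ has only an arity-$1$ component, so by \eqref{eq:homotoper component of varrho psnat} every component $\varrho_{\oone_U}^i=(-1)^i\oone_U^{i+1}$ vanishes, and hence $\varrho_{\oone_U}=0$.

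I expect no step to be a serious obstacle, since the heavy lifting is already in Lemmas \ref{lem:varrho map is a derivation} and \ref{lem:[rho,cdot] squares to zero}. The one point to state carefully is well-definedness of the hom-complex: I will take it to be the product over $m$ of intertwiners of degree $m$, consistent with the product convention in \eqref{eqn:Chhom}. I will also need that full skew-symmetry is preserved by juxtaposition and by $\pb{\rho}{\cdot}$. This holds because the shufflers $\Sigma_{\j,\k}$ antisymmetrise the $\g$-inputs, and the skew-symmetry involving the $U$-slot is inherited componentwise. I would justify this by a brief remark rather than a full computation.
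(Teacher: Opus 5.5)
Your proposal is correct and follows the paper's proof: associativity and unitality from Lemma~\ref{lem:juxtaposition is assoc/unital}, $\pb{\rho}{\pb{\rho}{f}}=0$ from Lemma~\ref{lem:[rho,cdot] squares to zero}, composition being a cochain map from the Leibniz rule \eqref{eq:[rho,gf]=...}, and units being $0$-cocycles because $\varrho_{\oone_U}=0$. Your extra well-definedness remarks are not in the paper and are harmless; one small correction is that the product in \eqref{eqn:Chhom} runs over source degrees inside a fixed degree $m$, not over $m$, so the hom-complex should simply have the degree-$m$ intertwiners as its degree-$m$ piece (this does not affect the argument).
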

\begin{proof}
The associativity and unitality of composition was proven in Lemma \ref{lem:juxtaposition is assoc/unital}, the map $\pb{\rho}{\cdot\,}$ was shown to satisfy the square-zero condition in Lemma \ref{lem:[rho,cdot] squares to zero} and Corollary \ref{cor:[rho,cdot] is derivation} demonstrated that the composition is a cochain map. Finally, the identity intertwiners \eqref{eq:unit intertwiner} are evidently equivariant maps as in \eqref{eq:equivariant as juxtaposition} because we obviously have
\begin{equation}\label{eq:varrho kills units}
\varrho_{\oone_V}=0\quad.
\end{equation}
\end{proof}
\begin{lem}\label{lem:gamma is natural}
Given graded vector spaces $U$ and $V$, if we define the intertwiner
\begin{align}\label{eq:symmetric braiding of dgRep}
\gamma_{U,V}:=\{s_{U,V}:U\otimes V\to V\otimes U\}:U\otimes V\s V\otimes U
\end{align}
then the following properties hold:
\begin{enumerate}
\item[(i)] The family of such intertwiners is natural, i.e. given $f:U\s U'\big[|f|\big]$ and $g:V\s V'\big[|g|\big]$,
\begin{equation}\label{eq:gamma's naturality}
\gamma_{U',V'}(f\odot g)=(-1)^{|f||g|}(g\odot f)\gamma_{U,V}\quad.
\end{equation}
\item[(ii)] Each $\gamma_{U,V}$ is involutive, i.e.
\begin{equation}\label{eq:gamma is involutive}
\gamma_{V,U}\gamma_{U,V}=\oone_{U\otimes V}\quad.
\end{equation}
\end{enumerate}
\end{lem}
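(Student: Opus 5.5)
Both claims reduce to component-wise computations, and the key simplification is that $\gamma_{U,V}$ has only an arity-one component, namely $\gamma^1_{U,V}=s_{U,V}$ with $\gamma^i_{U,V}=0$ for $i\geq2$, and it has degree $0$.

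\emph{Involutivity.} For (ii), we expand the juxtaposition formula \eqref{subeq:juxtaposition of intertwiners} for $\gamma_{V,U}\gamma_{U,V}$. Since both factors are concentrated in arity one, the only surviving term is $j=k=1$, i.e.\ $i=1$. The shuffler $\Sigma_{0,0}$ is the identity and the sign is trivial, so $(\gamma_{V,U}\gamma_{U,V})^1=s_{V,U}s_{U,V}=1_{U\otimes V}$ by the symmetry of $(\Ch,\otimes,\bbK,s)$. All higher components vanish, so $\gamma_{V,U}\gamma_{U,V}=\oone_{U\otimes V}$ as in \eqref{eq:unit intertwiner}.

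\emph{Naturality.} For (i), we compute both sides of \eqref{eq:gamma's naturality} in arity $i$.
\begin{itemize}
\item On the left, only $j=1$ survives in the juxtaposition, and the sign $(-1)^{(|\gamma|-\j)\k}$ is trivial. This gives
\[
[\gamma_{U',V'}(f\odot g)]^i=s_{U',V'}(f\odot g)^i=\sum_{\j+k=i}(-1)^{(|f|-\j)\k}\,s_{U',V'}[f^j\otimes g^k][1_{\j}\otimes s_{\k,U}\otimes1_V][\Sigma_{\j,\k}\otimes1_{U\otimes V}].
\]
\item On the right, only $k=1$ survives. The precomposition with $\gamma_{U,V}$ then contributes $1_{\i}\otimes s_{U,V}$ on the inputs, giving
\[
\sum_{\j'+k'=i}(-1)^{(|g|-\j')\k'}[g^{j'}\otimes f^{k'}][1_{\j'}\otimes s_{\k',V}\otimes1_U][\Sigma_{\j',\k'}\otimes1_{V\otimes U}][1_{\i}\otimes s_{U,V}].
\]
\end{itemize}
We reindex the right-hand side by $j'=k$ and $k'=j$, so that both sides run over the same pairs. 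Then we compare term by term using three facts. First, naturality of the Koszul braiding \eqref{eq:naturality of symmetric braiding} moves $s_{U',V'}$ past $f^j\otimes g^k$. Second, \eqref{eq:symmetry of shuffler} swaps $\Sigma_{\j,\k}$ into $\Sigma_{\k,\j}$ at the cost of $(-1)^{\j\k}$ and a block swap $s_{\k,\j}$. Third, the resulting composite of swaps on $\g^{\otimes\i}\otimes U\otimes V$ is identified with the coherence of the symmetric monoidal structure of $\Ch$, since both sides route the same tensor factors to the same positions.

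\emph{Main obstacle: the signs.} The bookkeeping of signs is the only real difficulty, so we track the exponents explicitly.
\begin{itemize}
\item Naturality of the braiding produces the Koszul sign of exchanging $f^j$ and $g^k$, which carry degrees $|f|-\j$ and $|g|-\k$. This gives $(-1)^{(|f|-\j)(|g|-\k)}$.
\item The shuffler swap contributes $(-1)^{\j\k}$.
\item The prefactors contribute $(-1)^{(|f|-\j)\k}$ on the left and $(-1)^{(|g|-\k)\j}$ on the right.
\end{itemize}
Expanding the exponents modulo $2$ should collapse all of this to the overall $(-1)^{|f||g|}$. This follows from the identity
\[
(|f|-\j)(|g|-\k)+(|f|-\j)\k+(|g|-\k)\j+\j\k\equiv|f||g|\pmod 2,
\]
which we will verify explicitly. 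The identification of the permutations should be checked on homogeneous inputs $x_1,\ldots,x_{\i},u,v$ to avoid errors. The degenerate terms with $j=1$ or $k=1$ are handled in the same way, using $\Sigma_{0,\k}=1$.
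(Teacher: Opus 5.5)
Your proposal is correct and follows essentially the same route as the paper. The paper likewise expands both sides in arity $i$ via juxtaposition and the odot product, moves $s_{U',V'}$ past $f^j\otimes g^k$ by naturality of the Koszul braiding, and swaps the shuffler using \eqref{eq:symmetry of shuffler}. It then identifies the remaining composite of swaps by the hexagon axiom, which plays the role of your coherence argument, and your exponent identity matches its sign bookkeeping. As you say, involutivity is immediate because both factors are concentrated in arity one.
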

\begin{proof}
\begin{enumerate}
\item[(i)] We have 
\begin{align*}
[\gamma_{U',V'}(f\odot g)]^i\overset{\eqref{subeq:juxtaposition of intertwiners}}{=}&s_{U',V'}(f\odot g)^i\\
\overset{\eqref{eq:odot of squiggly maps}}{=}&\sum_{\j+k=i}(-1)^{(|f|-\j)\k}s_{U',V'}[f^j\otimes g^k][1_{\j}\otimes s_{\k,U}\otimes1_V][\Sigma_{\j,\k}\otimes1_{U\otimes V}]\\
\overset{\eqref{eq:naturality of symmetric braiding},\eqref{eq:symmetry of shuffler}}{=}&\sum_{\j+k=i}(-1)^{(|f|-\j)|g|+\j\k}[g^k\otimes f^j]s_{\j U,\k V}[1_{\j}\otimes s_{\k,U}\otimes1_V][s_{\k,\j}\Sigma_{\k,\j}\otimes1_{U\otimes V}]\\
\overset{\eqref{eq:hexagon axiom of a symmetric braiding}}{=}&(-1)^{|f||g|}\sum_{\j+k=i}(-1)^{(|g|-\k)\j}[g^k\otimes f^j][1_{\k}\otimes s_{\j,V}\otimes1_U][\Sigma_{\k,\j}\otimes s_{U,V}]\\\overset{\eqref{eq:odot of squiggly maps}}{=}&(-1)^{|f||g|}(g\odot f)^i[1_{\i}\otimes s_{U,V}]\\
\overset{\eqref{subeq:juxtaposition of intertwiners}}{=}&(-1)^{|f||g|}[(g\odot f)\gamma_{U,V}]^i\numberthis
\end{align*}
\item[(ii)] Involutivity \eqref{eq:gamma is involutive} is obvious.
\end{enumerate}
\end{proof}
\begin{cor}\label{cor:odot is functorial}
The odot product \eqref{eq:odot of squiggly maps} satisfies the following two properties:
\begin{enumerate}
\item[(i)] Middle-four interchange law, i.e. given also $f':U'\s U''\big[|f'|\big]$ and $g':V'\s V''\big[|g'|\big]$,
\begin{equation}\label{eq:functoriality of odot}
(f'\odot g')(f\odot g)=(-1)^{|g'||f|}f'f\odot g'g\quad.
\end{equation}
\item[(ii)] Associativity, i.e. if we are also given $h:W\s W'\big[|h|\big]$ then we have
\begin{equation}\label{eq:odot is associative}
(f\odot g)\odot h=f\odot(g\odot h)\quad.
\end{equation}
\end{enumerate}
\end{cor}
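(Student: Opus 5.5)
The plan is a direct componentwise verification, evaluating both sides of each identity at arity $i$ and rearranging with the shuffler identities of Remark \ref{rem:shuffler restriction is well-defined}. This is the same strategy as the proofs of Lemma \ref{lem:juxtaposition is assoc/unital} and Lemma \ref{lem:gamma is natural}.

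For the middle-four interchange law \eqref{eq:functoriality of odot}, expand $[(f'\odot g')(f\odot g)]^i$ using \eqref{subeq:juxtaposition of intertwiners} and \eqref{eq:odot of squiggly maps}. This gives a triple-type sum over $\j+k=i$ and over a splitting $\k$ of the inner $\g$-inputs between $f$ and $g$. The outer $f'^{j'}\otimes g'^{k'}$ block receives the shuffled $\g$-inputs together with the output $f^a\otimes g^b$. We then do three things:
\begin{itemize}
\item[(1)] Use the associativity relation \eqref{eq:associativity of shuffler} to merge the two shufflers into a single four-block shuffler $\Sigma_{\j',\k',\a,\b}$, with blocks of $\g$-inputs going to $f'$, $g'$, $f$, $g$.
\item[(2)] Use the symmetry relation \eqref{eq:symmetry of shuffler} together with the naturality \eqref{eq:naturality of symmetric braiding} of $s$ to move the $g'$-block past the $f$-block and the $f$-output. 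This regroups the inputs as ($f'$-inputs, $f$-inputs) and ($g'$-inputs, $g$-inputs).
\item[(3)] Re-split the four-block shuffler as $\Sigma_{\j'+\a,\k'+\b}$ composed with $\Sigma_{\j',\a}\otimes\Sigma_{\k',\b}$. This recognises the expression as $(f'f\odot g'g)^i$.
\end{itemize}
The sign $(-1)^{|g'||f|}$ should emerge from the Koszul sign of commuting $g'$ (more precisely $g'^{k'}$, of degree $|g'|-\k'$) past $f^a$, of degree $|f|-\a$. It should also absorb the arity-dependent signs $(-1)^{(|\cdot|-\j)\k}$ from the definitions, using the interchange law \eqref{eq:interchange for tensor product} in $\Ch$.

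For associativity \eqref{eq:odot is associative}, both sides at arity $i$ expand to a sum over triples of arities $j+k+l$, with a three-block shuffler and swaps $s$ routing each block of $\g$-inputs past the preceding spaces $U$, $V$. By \eqref{eq:associativity of shuffler} both shufflers reduce to $\Sigma_{\j,\k,\l}$. By the hexagon relation \eqref{eq:hexagon axiom of a symmetric braiding} for $s$, both routing permutations coincide. It then remains to check that the exponents $(|f|-\j)\k+(|f|+|g|-\j-\k)\l$ and $(|g|-\k)\l+(|f|-\j)(\k+\l)$ agree, and they do on expanding.

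An alternative for (i), which avoids part of the sign bookkeeping, is to observe that $f\odot g=(f\odot\oone_{V'})(\oone_U\odot g)$ up to sign. This would reduce the interchange law to the cases where one factor is an identity, combined with the naturality of $\gamma$ in Lemma \ref{lem:gamma is natural}. I would, however, first attempt the direct computation.

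The main obstacle I expect is step (2) of the interchange law: tracking the Koszul signs when the $g'$-input block and $g'$ itself are moved past the $f$-block, and confirming that every arity-dependent sign cancels except $(-1)^{|g'||f|}$. I would handle this by writing the sign as a quadratic form in $(|f|,|g|,|f'|,|g'|,\j',\k',\a,\b)$ and comparing coefficients on both sides.
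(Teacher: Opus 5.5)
Your proposal is correct, but your main route is not the paper's. The paper's proof is essentially the alternative you mention at the end.

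\textbf{The paper's route.} It never expands a general composite $(f'\odot g')(f\odot g)$ componentwise. It checks only the two trivial cases $(f\odot\oone_{V'})(\oone_U\odot g)=f\odot g$ and $(f'\odot\oone_V)(f\odot\oone_V)=f'f\odot\oone_V$. Both are immediate because $(f\odot\oone_V)^i=f^i\otimes 1_V$ and $(\oone_U\odot g)^i=(1_U\otimes g^i)(s_{\tilde{i},U}\otimes 1_V)$. From these and associativity of juxtaposition (Lemma \ref{lem:juxtaposition is assoc/unital}) it gets $(f'\odot\oone_{V'})(f\odot g)=f'f\odot g$. It then moves this to the $g'$-side by conjugating with $\gamma$, using the naturality and involutivity of Lemma \ref{lem:gamma is natural}; that is where $(-1)^{|g'||f|}$ enters. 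Finally, (ii) is derived from (i) by factoring $(f\odot g)\odot h$ into three composites, each with a single non-identity entry, and using the evident unit cases.

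\textbf{Your direct computation.} It does go through; the proposal only asserts the sign cancellation, and it checks out:
\begin{itemize}
\item For (i), you collect $(-1)^{(|g'|-\tilde{k}')(|f|-\tilde{a})}$ from \eqref{eq:interchange for tensor product}. You also get $(-1)^{\tilde{k}'\tilde{a}}$ from \eqref{eq:symmetry of shuffler} when reordering $\Sigma_{\tilde{j}',\tilde{k}',\tilde{a},\tilde{b}}$ into $\Sigma_{\tilde{j}',\tilde{a},\tilde{k}',\tilde{b}}$. Together with the defining signs of juxtaposition and odot, the total exponent differs mod $2$ from that of $(f'f\odot g'g)^i$ by exactly $|g'||f|$. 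The two routing permutations coincide by the hexagon relation and involutivity of $s$.
\item For (ii), your exponent comparison is right. On the right-hand side you also need naturality of $s$ to pull $\Sigma_{\tilde{k},\tilde{l}}$ back through the swap $s_{\tilde{k}+\tilde{l},U}$ before \eqref{eq:associativity of shuffler} gives $\Sigma_{\tilde{j},\tilde{k},\tilde{l}}$.
\end{itemize}

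\textbf{Trade-off.} The paper's route puts all nontrivial sign bookkeeping into lemmas it has already proved and makes (ii) a formal consequence of (i). Yours is self-contained and does not depend on naturality of $\gamma$, and it proves (ii) independently of (i). The cost is a heavier one-off sign check, which a written proof would need to actually carry out rather than leave at ``should emerge''.
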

\begin{proof}
\begin{enumerate}
\item[(i)] We obviously have:
\begin{subequations}
\begin{alignat}{6}
(f\odot\oone_{V'})(\oone_U\odot g)=&\,f\odot g\quad&&,\label{eq:(fx1)(1xg)=}\\
(f'\odot\oone_V)(f\odot\oone_V)=&\,f'f\odot\oone_V&&.\label{eq:(f'x1)(fx1)=}
\end{alignat}
\end{subequations}
This gives us
\begin{align*}
(f'\odot\oone_{V'})(f\odot g)\overset{\eqref{eq:(fx1)(1xg)=}}{=}&(f'\odot\oone_{V'})(f\odot\oone_{V'})(\oone_U\odot g)\\
\overset{\eqref{eq:(f'x1)(fx1)=}}{=}&(f'f\odot\oone_{V'})(\oone_U\odot g)\\
\overset{\eqref{eq:(fx1)(1xg)=}}{=}&f'f\odot g\quad.\numberthis\label{eq:(f'x1)(fxg)}
\end{align*}
We use the naturality and invertibility of $\gamma$ to derive from \eqref{eq:(f'x1)(fxg)} the relation
\begin{equation}\label{eq:(1xg')(fxg)}
(\oone_{U'}\odot g')(f\odot g)=(-1)^{|g'||f|}f\odot g'g
\end{equation}
and now we can say
\begin{align*}
(f'\odot g')(f\odot g)&\overset{\eqref{eq:(fx1)(1xg)=}}{=}(f'\odot\oone_{V''})(\oone_{U'}\odot g')(f\odot g)\\
&\,\overset{\eqref{eq:(1xg')(fxg)}}{=}(-1)^{|g'||f|}(f'\odot\oone_{V''})(f\odot g'g)\\
&\,\overset{\eqref{eq:(f'x1)(fxg)}}{=}(-1)^{|g'||f|}f'f\odot g'g\quad.\numberthis
\end{align*}
\item[(ii)] We obviously have:
\begin{subequations}\label{subeq:odot is associative for units}
\begin{alignat}{6}
(f\odot\oone_V)\odot\oone_W=&\,f\odot(\oone_V\odot\oone_W)\quad&&,\\
(\oone_U\odot g)\odot\oone_W=&\,\oone_U\odot(g\odot\oone_W)&&,\\
(\oone_U\odot\oone_V)\odot h=&\,\oone_U\odot(\oone_V\odot h)&&
\end{alignat}
\end{subequations}
thus
\begin{align*}
(f\odot g)\odot h\overset{\eqref{eq:functoriality of odot}}{=}&([f\odot\oone_{V'}]\odot\oone_{W'})([\oone_U\odot g]\odot\oone_{W'})([\oone_U\odot\oone_V]\odot h)\\
\overset{\eqref{subeq:odot is associative for units}}{=}&(f\odot[\oone_{V'}\odot\oone_{W'}])(\oone_U\odot[g\odot\oone_{W'}])(\oone_U\odot[\oone_V\odot h])\\
\overset{\eqref{eq:functoriality of odot}}{=}&f\odot(g\odot h)\quad.\numberthis
\end{align*}
\end{enumerate}
\end{proof}
\begin{cor}\label{cor:gamma is equivariant}
Given intertwiners $\rho_U:U\s U[1]$ and $\rho_V:V\s V[1]$, if we set
\begin{equation}\label{eq:rho_UV}
\rho_{U,V}:=\rho_U\odot\oone_V+\oone_U\odot\rho_V:U\otimes V\s(U\otimes V)[1]
\end{equation}
then we have the following facts:
\begin{enumerate}
\item[(i)] The intertwiner $\gamma_{U,V}$ in Lemma \ref{lem:gamma is natural} is an equivariant map as in \eqref{eq:equivariant as juxtaposition}.
\item[(ii)] The map $\pb{\rho}{\cdot}$ from Corollary \ref{cor:[rho,cdot] is derivation} is a derivation of the odot product \eqref{eq:odot of squiggly maps}, i.e. given also $\rho_{U'}:U'\s U'[1]$ and $\rho_{V'}:V'\s V'[1]$, 
\begin{equation}\label{eq:odot is cochain}
\pb{\rho}{f\odot g}=\pb{\rho}{f}\odot g+(-1)^{|f|}f\odot\pb{\rho}{g}\quad.
\end{equation}
\item[(iii)] If $\rho_U$ and $\rho_V$ are representations as in \eqref{eq:[rho,rho_V]} then so is \eqref{eq:rho_UV}.
\end{enumerate}
\end{cor}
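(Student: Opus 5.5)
We prove the three items in the order (i), (ii), (iii): (ii) is the engine and (iii) follows formally from it. Throughout we use the formula \eqref{eq:diff of squiggly map}, $\pb{\rho}{f}=\rho_Vf-(-1)^{|f|}f\rho_U-\varrho_f\ell_U$, with $\rho_{U,V}$ from \eqref{eq:rho_UV} as the action on tensor products. We also use the naturality and involutivity of $\gamma$ (Lemma \ref{lem:gamma is natural}) and the interchange and associativity laws of Corollary \ref{cor:odot is functorial}.

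For (i), $\gamma_{U,V}$ has only an arity-one component, so $\varrho_{\gamma_{U,V}}=0$ by \eqref{eq:homotoper component of varrho psnat}. Hence $\pb{\rho}{\gamma_{U,V}}=\rho_{V,U}\gamma_{U,V}-\gamma_{U,V}\rho_{U,V}$. By \eqref{eq:gamma's naturality} with $|\rho|=1$ and $|\oone|=0$,
\[
\gamma_{U,V}(\rho_U\odot\oone_V)=(\oone_V\odot\rho_U)\gamma_{U,V}\quad\text{and}\quad\gamma_{U,V}(\oone_U\odot\rho_V)=(\rho_V\odot\oone_U)\gamma_{U,V}\quad,
\]
with no signs appearing. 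Summing these gives $\gamma_{U,V}\rho_{U,V}=\rho_{V,U}\gamma_{U,V}$, which is the equivariance.

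For (ii), we first reduce to the case $g=\oone_V$. By \eqref{eq:(fx1)(1xg)=} we have $f\odot g=(f\odot\oone_{V'})(\oone_U\odot g)$. Corollary \ref{cor:[rho,cdot] is derivation} says $\pb{\rho}{\cdot}$ is a derivation of juxtaposition, and the interchange law \eqref{eq:functoriality of odot} then recombines the factors. So it suffices to show
\[
\pb{\rho}{f\odot\oone_V}=\pb{\rho}{f}\odot\oone_V\quad\text{and}\quad\pb{\rho}{\oone_U\odot g}=\oone_U\odot\pb{\rho}{g}\quad.
\]
The second identity follows from the first by conjugating with $\gamma$. This uses (i), the derivation property, and \eqref{eq:gamma's naturality}, with the sign $(-1)^{|f|}$ arising from that naturality.

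The first identity is the main computation, and we split it term by term.
\begin{itemize}
\item Using \eqref{eq:functoriality of odot}, $\rho_{U',V}(f\odot\oone_V)=\rho_{U'}f\odot\oone_V+(-1)^{|f|}f\odot\rho_V$.
\item Similarly, $(f\odot\oone_V)\rho_{U,V}=f\rho_U\odot\oone_V+f\odot\rho_V$.
\item With the sign $-(-1)^{|f|}$ from \eqref{eq:diff of squiggly map}, the two $f\odot\rho_V$ terms cancel.
\end{itemize}
What remains to show is $\varrho_{f\odot\oone_V}\ell_{U\otimes V}=(\varrho_f\ell_U)\odot\oone_V$. We check this componentwise. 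From \eqref{eq:odot of squiggly maps} with $g=\oone_V$, only the $k=1$ term survives, so $(f\odot\oone_V)^i=f^i\otimes1_V$. Hence $\varrho_{f\odot\oone_V}=\varrho_f\odot\oone_V$ componentwise. We also have $\ell_{U\otimes V}=\ell_U\odot\oone_V$, because both are the skew-symmetric extensions of $\ell^{\i}\otimes1_U\otimes1_V$. The claim then follows from \eqref{eq:(f'x1)(fx1)=}.

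The main obstacle is the sign bookkeeping in these componentwise identifications. In particular, the full skew-symmetrisation in the last slot means $V$ sits inside the antisymmetrised block, and we must check that the shuffles in \eqref{eq:odot of squiggly maps} reproduce the skew extension of $\ell^{\i}\otimes1_{U\otimes V}$ exactly. We would verify this by evaluating both sides on $x_1\otimes\cdots\otimes x_{\i}\otimes u\otimes v$ and comparing the unique surviving term.

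For (iii), write $\rho=\rho_{U,V}$. By (ii),
\[
\pb{\rho}{\rho_U\odot\oone_V}=\pb{\rho}{\rho_U}\odot\oone_V=\rho_U\rho_U\odot\oone_V\quad,
\]
and, using $|\oone_U|=0$,
\[
\pb{\rho}{\oone_U\odot\rho_V}=\oone_U\odot\rho_V\rho_V\quad.
\]
On the other hand, by the interchange law,
\[
\rho_{U,V}\rho_{U,V}=\rho_U\rho_U\odot\oone_V+\oone_U\odot\rho_V\rho_V+(\rho_U\odot\oone_V)(\oone_U\odot\rho_V)+(\oone_U\odot\rho_V)(\rho_U\odot\oone_V)\quad.
\]
The last two cross terms are $\rho_U\odot\rho_V$ and $(-1)^{1\cdot1}\rho_U\odot\rho_V$, so they cancel. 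Therefore $\pb{\rho}{\rho_{U,V}}=\rho_{U,V}\rho_{U,V}$, which is \eqref{eq:[rho,rho_V]} for $\rho_{U,V}$.
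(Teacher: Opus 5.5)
Your proposal is correct and follows essentially the same route as the paper: $\varrho_{\gamma_{U,V}}=0$ plus naturality gives (i); for (ii) you prove $\pb{\rho}{f\odot\oone_V}=\pb{\rho}{f}\odot\oone_V$ via $\varrho_{f\odot\oone_V}=\varrho_f\odot\oone_V$ and $\ell_{U\otimes V}=\ell_U\odot\oone_V$, get the other slot by conjugating with $\gamma$, and recombine; the interchange law then gives (iii). One small correction: conjugating with $\gamma$ produces no sign, so the $(-1)^{|f|}$ in \eqref{eq:odot is cochain} comes from the derivation property of juxtaposition applied to $(f\odot\oone_{V'})(\oone_U\odot g)$, not from the naturality of $\gamma$.
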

\begin{proof}
\begin{enumerate}
\item[(i)] It is evident that
\begin{equation}\label{eq:varrho annihilates gamma}
\varrho_{\gamma_{U,V}}=0
\end{equation}
hence the claim reduces to $\gamma_{U,V}\rho_{U,V}=\rho_{V,U}\gamma_{U,V}$
but this follows from $\gamma$'s naturality \eqref{eq:gamma's naturality}.
\item[(ii)] We obviously have
\begin{subequations}\label{subeq:left unity of varrho and ell}
\begin{equation}\label{eq:varrho left unity}
\varrho_{f\odot\oone_V}=\varrho_f\odot\oone_V
\end{equation}
and 
\begin{equation}\label{eq:ell_UV}
\ell_{U\otimes V}=\ell_U\odot\oone_V
\end{equation}
\end{subequations}
thus
\begin{subequations}\label{subeq:[rho,fx1] and [rho,1xg]}
\begin{align*}
\pb{\rho}{f\odot\oone_V}\overset{\eqref{eq:diff of squiggly map}}{=}&\rho_{U',V}(f\odot\oone_V)-(-1)^{|f|}(f\odot\oone_V)\rho_{U,V}-\varrho_{f\odot\oone_V}\ell_{U\otimes V}\\
\overset{\eqref{subeq:left unity of varrho and ell}}{=}&(\rho_{U'}\odot\oone_V+\oone_{U'}\odot\rho_V)(f\odot\oone_V)-(-1)^{|f|}(f\odot\oone_V)(\rho_U\odot\oone_V+\oone_U\odot\rho_V)\\&-(\varrho_f\odot\oone_V)(\ell_U\odot\oone_V)\\
\overset{\eqref{eq:functoriality of odot}}{=}&\pb{\rho}{f}\odot\oone_V\numberthis
\end{align*}
but we can also use the fact that $\gamma$ is a natural isomorphism whose components are equivariant maps to immediately derive 
\begin{equation}
\pb{\rho}{\oone_U\odot g}=\oone_U\odot\pb{\rho}{g}
\end{equation}
\end{subequations}
hence
\begin{align*}
\pb{\rho}{f\odot g}\overset{\eqref{eq:functoriality of odot}}{=}&\pb{\rho}{(f\odot\oone_{V'})(\oone_U\odot g)}\\
\overset{\eqref{eq:[rho,gf]=...}}{=}&\pb{\rho}{f\odot\oone_{V'}}(\oone_U\odot g)+(-1)^{|f|}(f\odot\oone_{V'})\pb{\rho}{\oone_U\odot g}\\
\overset{\eqref{subeq:[rho,fx1] and [rho,1xg]}}{=}&(\pb{\rho}{f}\odot\oone_{V'})(\oone_U\odot g)+(-1)^{|f|}(f\odot\oone_{V'})(\oone_U\odot\pb{\rho}{g})\\
\overset{\eqref{eq:functoriality of odot}}{=}&\pb{\rho}{f}\odot g+(-1)^{|f|}f\odot\pb{\rho}{g}\numberthis\quad.
\end{align*}
\item[(iii)] We simply compute
\begin{align*}
\pb{\rho}{\rho_U\odot\oone_V+\oone_U\odot\rho_V}\overset{\eqref{eq:odot is cochain}}{=}&\pb{\rho}{\rho_U}\odot\oone_V+\oone_U\odot\pb{\rho}{\rho_V}\\
\overset{\eqref{eq:[rho,rho_V]}}{=}&\rho_U\rho_U\odot\oone_V+\oone_U\odot\rho_V\rho_V\\
\overset{\eqref{eq:functoriality of odot}}{=}&(\rho_U\odot\oone_V+\oone_U\odot\rho_V)(\rho_U\odot\oone_V+\oone_U\odot\rho_V)\numberthis\quad.
\end{align*}
\end{enumerate}
\end{proof}

\begin{propo}\label{propo: sym dg}
The dg-category $\Rep$ of Proposition \ref{propo:Rep is a dg-cat} is a symmetric strict monoidal dg-category as in Definition \ref{def:symmetric strict monoidal dg-category}:
\begin{enumerate}
\item[(i)] Given representations $U=(U,\rho_U)$ and $V=(V,\rho_V)$, their monoidal product $U\odot V$ is given by the tensor product $U\otimes V$ with the structure of representation given by \eqref{eq:rho_UV}.
\item[(ii)] The monoidal product of intertwiners is given by the odot product \eqref{eq:odot of squiggly maps}.
\item[(iii)] The monoidal unit is the ground field $\bbK$ equipped with the trivial representation $\rho_\bbK=0$.
\item[(iv)] The symmetric braiding $\gamma:\odot\Rightarrow\odot^\op$ is simply given by \eqref{eq:symmetric braiding of dgRep}.
\end{enumerate}
\end{propo}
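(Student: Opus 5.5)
The plan is to assemble the required data from Corollaries \ref{cor:odot is functorial} and \ref{cor:gamma is equivariant} and Lemma \ref{lem:gamma is natural}, and then check the axioms of Definition \ref{def:symmetric strict monoidal dg-category} one by one.

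First, the objects. By Corollary \ref{cor:gamma is equivariant}(iii), $U\odot V:=(U\otimes V,\rho_{U,V})$ is again a representation. The unit $(\bbK,0)$ is a representation because \eqref{eq:rho_Vrho_V=..} reads $0=0$.

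Second, we show that $\odot:\Rep\cmp\Rep\to\Rep$ is a dg-functor.
\begin{itemize}
\item On hom-complexes, the morphism map $f\otimes g\mapsto f\odot g$ is a cochain map by the derivation property \eqref{eq:odot is cochain}. This is exactly compatibility with the tensor-product differential on $\Rep[U,U']\otimes\Rep[V,V']$.
\item Compatibility with the composition \eqref{eq:interchange for tensor product} of $\Rep\cmp\Rep$ is the interchange law \eqref{eq:functoriality of odot}, with the same Koszul sign $(-1)^{|g'||f|}$.
\item Preservation of units, $\oone_U\odot\oone_V=\oone_{U\otimes V}$, is immediate from \eqref{eq:odot of squiggly maps}, since only the $j=k=1$ term survives.
\end{itemize}

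Third, strict associativity and unitality. On objects, the underlying spaces are strictly associative by the strictness convention. Associativity of the representation structures, $\rho_{U\otimes V,W}=\rho_{U,V\otimes W}$, follows from \eqref{eq:odot is associative} applied to $\rho_U\odot\oone_V\odot\oone_W$, $\oone_U\odot\rho_V\odot\oone_W$ and $\oone_U\odot\oone_V\odot\rho_W$. On morphisms, associativity is again \eqref{eq:odot is associative}. For unitality, $\bbK\otimes V=V$ and $\rho_{\bbK,V}=0\odot\oone_V+\oone_\bbK\odot\rho_V=\rho_V$. Likewise $\oone_\bbK\odot g=g$ directly from \eqref{eq:odot of squiggly maps}: all terms with $j\ge2$ involve $\oone_\bbK^j=0$, and $s_{\k,\bbK}$ is trivial.

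Finally, the braiding. Each component $\gamma_{U,V}$ is a degree-$0$ cocycle by Corollary \ref{cor:gamma is equivariant}(i), and it is invertible with $\gamma_{V,U}=\gamma_{U,V}^{-1}$ by \eqref{eq:gamma is involutive}. Dg-naturality with respect to $\odot$ versus $\odot^\op=\odot\circ S$ is exactly \eqref{eq:gamma's naturality}, including the sign $(-1)^{|f||g|}$ coming from $S$. For the hexagon axiom \eqref{eq:hexagon axiom of a symmetric braiding}, all intertwiners involved have only an arity-one component, namely $s$ or identities. For such single-component intertwiners, \eqref{subeq:juxtaposition of intertwiners} and \eqref{eq:odot of squiggly maps} reduce to ordinary composition and tensor product. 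The hexagon therefore follows from the corresponding identity for $s$ in $\Ch$.

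The only step requiring genuine care is matching signs in the dg-functoriality of $\odot$ and in the naturality of $\gamma$ against the conventions of $\dgCat$. I expect this to go through directly, because the cited corollaries already carry exactly the Koszul signs of \eqref{eq:interchange for tensor product} and of $S$.
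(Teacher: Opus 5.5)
Your proposal is correct and is essentially the paper's proof. The paper likewise just assembles Lemma \ref{lem:gamma is natural} and Corollaries \ref{cor:odot is functorial} and \ref{cor:gamma is equivariant}, working through items (iv) to (i) in reverse order, while you spell out a few points it calls obvious, such as the hexagon reducing to the identity in $\Ch$ because every intertwiner involved is concentrated in arity one, and $(\bbK,0)$ being a representation.
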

\begin{proof}
We work backwards:
\begin{enumerate}
\item[(iv)] Lemma \ref{lem:gamma is natural} showed naturality and involutivity of the intertwiner $\gamma_{U,V}$ whereas item (i) of Corollary \ref{cor:gamma is equivariant} showed that $\gamma_{U,V}$ is an equivariant map with respect to the choice of $\rho_{U,V}$ in \eqref{eq:rho_UV}. In addition to this, it is obvious that the hexagon axiom is satisfied, i.e.
\begin{align}
\gamma_{U\odot V,W}=(\gamma_{U,W}\odot\oone_V)(\oone_U\odot \gamma_{V,W})\quad.
\end{align}
\item[(iii)] We evidently have $f\odot\oone_\bbK=f=\oone_\bbK\odot f$ thus $\rho_{U,\bbK}=\rho_U=\rho_{\bbK,U}$ hence $U\odot\bbK=U=\bbK\odot U$.
\item[(ii)] Item (i) of Corollary \ref{cor:odot is functorial} showed that the odot product satisfies the middle-four interchange law but we also clearly have preservation of units $\oone_U\odot\oone_V=\oone_{U\odot V}$. Item (ii) of Corollary \ref{cor:odot is functorial} showed that the odot product is associative thus giving $\rho_{U\odot V,W}=\rho_{U,V\odot W}$ hence $(U\odot V)\odot W=U\odot(V\odot W)$. Item (ii) of Corollary \ref{cor:gamma is equivariant} showed that the odot product is a cochain map between hom-complexes.
\item[(i)] Finally, item (iii) of Corollary \ref{cor:gamma is equivariant} shows that $\rho_{U,V}$ is a representation.
\end{enumerate}
\end{proof}

\subsection{Infinitesimal 2-braidings from 2-shifted Poisson structures}\label{subsec:Poisson inf2bra}
Throughout this subsection we use the adjoint representation from Example \ref{ex:adjoint representation}, denoted as $\g$. By $L_\g:\Rep\to\Rep$, we mean the following dg-functor:
\begin{subequations}
\begin{alignat}{6}
L_\g(U):=&\g\odot U\quad&&,\\
L_\g(f):=&\oone_\g\odot f&&.
\end{alignat}
\end{subequations}
We wish to define a dg-pseudonatural transformation $\varrho:L_\g\Rightarrow\id_{\Rep}:\Rep\to\Rep$ as in Definition \ref{def:dg-pseudonatural transformation}; in fact, Lemma \ref{lem:varrho map is a derivation} already demonstrated that \eqref{eq:derivation axiom of dg-pseudo} holds.
\begin{lem}\label{lem: action map is g linear}
For every representation $U$, we can define an equivariant map $\varrho_U:\g\odot U\s U$ as
\begin{align}\label{eq:cochain component of varrho psnat}
\left\{\varrho_U^i\overset{\eqref{eq:def of varrho^i_V}}{=}(-1)^{\i}\rho_U^{i+1}:\g^{\otimes\i}\otimes\g\otimes U\to U\big[-\i\big]\right\}_{i\geq1}
\end{align} 
and, given another representation $V$ and an intertwiner $f:U\s V\big[|f|\big]$, we have
\begin{align}\label{eq:varrho is pseudonatural}
f\varrho_U-\varrho_V(\oone_\g\odot f)=\pb{\rho}{\varrho_f}+\varrho_{\pb{\rho}{f}}\quad.
\end{align}
\end{lem}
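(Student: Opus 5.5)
The plan is to reduce everything to identities already established for the varrho map of Lemma \ref{lem:varrho map is a derivation}. First observe that, comparing \eqref{eq:cochain component of varrho psnat} with \eqref{eq:homotoper component of varrho psnat}, the component maps satisfy $\varrho_U^i=(-1)^{\i}\rho_U^{i+1}=-(-1)^i\rho_U^{i+1}=-\varrho_{\rho_U}^i$. Hence $\varrho_U=-\varrho_{\rho_U}$ as intertwiners $\g\otimes U\s U$. This has degree $|\rho_U|-1=0$, and full skew-symmetry is inherited from that of $\rho_U$. So the data is well-defined, and both claims become statements about $\varrho$ applied to intertwiners.

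For \eqref{eq:varrho is pseudonatural} I would work with an arbitrary pair of intertwiners $\rho_U,\rho_V$ and a general $f$, and avoid arity components as far as possible. The first step is to apply $\varrho$ to the defining formula \eqref{eq:diff of squiggly map} of $\pb{\rho}{f}$. Using Lemma \ref{lem:varrho map is a derivation} three times, this expands $\varrho_{\pb{\rho}{f}}$ into terms of the following shapes:
\begin{itemize}
\item $\varrho_{\rho_V}(\oone_\g\odot f)$ and $\rho_V\varrho_f$;
\item $\varrho_f(\oone_\g\odot\rho_U)$ and $f\varrho_{\rho_U}$;
\item $\varrho_{\varrho_f}(\oone_\g\odot\ell_U)$ and $\varrho_f\varrho_{\ell_U}$.
\end{itemize}
The second step is to expand $\pb{\rho}{\varrho_f}$ directly from \eqref{eq:diff of squiggly map}, with source representation $\g\odot U$. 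By \eqref{eq:rho_UV} its structure map is $\rho_\g\odot\oone_U+\oone_\g\odot\rho_U$ with $\rho_\g=\pi_1$, and by \eqref{eq:ell_UV} $\ell_{\g\otimes U}=\ell_\g\odot\oone_U$. Adding the two expansions, the terms $\rho_V\varrho_f$ should cancel, and so should $\varrho_f(\oone_\g\odot\rho_U)$. What remains ought to be $-\varrho_{\rho_V}(\oone_\g\odot f)+f\varrho_{\rho_U}$, which equals $f\varrho_U-\varrho_V(\oone_\g\odot f)$ after a sign check, together with a leftover collection of three terms:
\begin{equation*}
\varrho_f\big[(\pi_1\odot\oone_U)-\varrho_{\ell_U}\big]\quad\text{and}\quad\varrho_{\varrho_f}\big[(\oone_\g\odot\ell_U)+(\ell_\g\odot\oone_U)\big]\quad,
\end{equation*}
with appropriate signs.

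The main obstacle is to show that this leftover collection vanishes. I expect two identities to do it. The first is $\varrho_{\ell_U}=\pi_1\odot\oone_U$ up to sign. This should follow directly from \eqref{eq:homotoper component of varrho psnat} and \eqref{eq:pi at U}, since $\varrho_{\ell_U}$ just peels off one $\g$-input of $\ell^{\i}\otimes1_U$. The second is that $\varrho_{\varrho_f}$ is graded-antisymmetric in its two distinguished $\g$-slots. This holds because $\varrho_{\varrho_f}^i=\pm f^{i+2}$ and $f$ is fully skew-symmetric. The symmetric combination $(\oone_\g\odot\ell_U)+(\ell_\g\odot\oone_U)$, composed with this antisymmetric map, should therefore cancel after using $\gamma$-naturality \eqref{eq:gamma's naturality} to swap the two $\g$-factors. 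If the intertwiner-level bookkeeping becomes opaque, I would fall back to checking the leftover in arity components via the bullet-product form of Remark \ref{rem:comparison to Allocca}, where full skew-symmetry makes the cancellation a shuffle identity from \eqref{eq:unitality of shuffler}.

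Finally, equivariance of $\varrho_U$ amounts to $\pb{\rho}{\varrho_U}=0$. Once the computation above is carried out for general $f$, I would specialise to $f=\rho_U$, viewed as an intertwiner $U\s U[1]$ with $U=V$. Then \eqref{eq:varrho is pseudonatural} gives $-\pb{\rho}{\varrho_U}$ expressed through $\rho_U\varrho_U-\varrho_U(\oone_\g\odot\rho_U)$ and $\varrho_{\pb{\rho}{\rho_U}}$. Applying $\varrho$ to the representation condition $\pb{\rho}{\rho_U}=\rho_U\rho_U$ from \eqref{eq:[rho,rho_V]}, and using Lemma \ref{lem:varrho map is a derivation} once more, these terms should cancel exactly. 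Alternatively, one can rerun the leftover-vanishing argument with the extra term $\rho_U\rho_U=\varrho_{\rho_U}\ell_U$ from \eqref{eq:rho_Vrho_V=..} inserted.
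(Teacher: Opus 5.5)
Your proposal is correct and follows essentially the same route as the paper. The paper also expands $\pb{\rho}{\varrho_f}+\varrho_{\pb{\rho}{f}}$ via \eqref{eq:diff of squiggly map}, \eqref{eq:rho_UV}, \eqref{eq:ell_UV} and \eqref{eq:varrho is associative}, and discards the leftovers $\varrho_f(\rho_\g\odot\oone_U+\varrho_{\ell_U})$ and $\varrho_{\varrho_f}(\ell_\g\odot\oone_U+\oone_\g\odot\ell_U)$, which it calls trivial and which you justify via $\varrho_{\ell_U}=-\rho_\g\odot\oone_U$ and the skew-symmetry of $\varrho_{\varrho_f}$; it then concludes with $\varrho_U=-\varrho_{\rho_U}$ and equivariance from the specialisation $f=\rho_U$. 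One small slip: the surviving terms are $\varrho_{\rho_V}(\oone_\g\odot f)-f\varrho_{\rho_U}$, not their negative as you predicted, and this is already exactly $f\varrho_U-\varrho_V(\oone_\g\odot f)$ with no further sign adjustment.
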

\begin{proof}
We first demonstrate that the pseudonaturality relation \eqref{eq:varrho is pseudonatural} holds,
\begin{align*}
\pb{\rho}{\varrho_f}+\varrho_{\pb{\rho}{f}}\overset{\eqref{eq:diff of squiggly map}}{=}\rho_V\varrho_f&+(-1)^{|f|}\varrho_f\rho_{\g,U}-\varrho_{\varrho_f}\ell_{\g\otimes U}+\varrho_{\rho_Vf-(-1)^{|f|}f\rho_U-\varrho_f\ell_U}\numberthis\\
\overset{\eqref{eq:rho_UV},\eqref{eq:ell_UV},\eqref{eq:varrho is associative}}{=}&\rho_V\varrho_f+(-1)^{|f|}\varrho_f(\rho_\g\odot\oone_U+\oone_\g\odot\rho_U)-\varrho_{\varrho_f}(\ell_\g\odot\oone_U)+\varrho_{\rho_V}(\oone_\g\odot f)\\&-\rho_V\varrho_f-(-1)^{|f|}\varrho_f(\oone_\g\odot\rho_U)-f\varrho_{\rho_U}-\varrho_{\varrho_f}(\oone_\g\odot\ell_U)+(-1)^{|f|}\varrho_f\varrho_{\ell_U}
\end{align*}
however it is straightforward and trivial to check that
\begin{align}
\varrho_f(\rho_\g\odot\oone_U+\varrho_{\ell_U})=0=\varrho_{\varrho_f}(\ell_\g\odot\oone_U+\oone_\g\odot\ell_U)\quad,
\end{align}
leaving us with 
\begin{equation}\label{eq:proof varrho is pseudo}
\pb{\rho}{\varrho_f}+\varrho_{\pb{\rho}{f}}=\varrho_{\rho_V}(\oone_\g\odot f)-f\varrho_{\rho_U}
\end{equation}
but we clearly have
\begin{equation}\label{eq:varrho_U=-varrho_rho_U}
\varrho_U=-\varrho_{\rho_U}\quad.
\end{equation}
Now we show that the components \eqref{eq:cochain component of varrho psnat} constitute an equivariant map,
\begin{align}
\pb{\rho}{\varrho_U}\overset{\eqref{eq:varrho_U=-varrho_rho_U}}{=}-\pb{\rho}{\varrho_{\rho_U}}\overset{\eqref{eq:proof varrho is pseudo},\eqref{eq:[rho,rho_V]}}{=}\varrho_{\rho_U\rho_U}-\varrho_{\rho_U}(\oone_\g\odot\rho_U)+\rho_U\varrho_{\rho_U}
\overset{\eqref{eq:varrho is associative}}{=}0\,.
\end{align}
\end{proof}
\begin{lem}
Given intertwiners $f:U\s U'\big[|f|\big]$ and $g:V\s V'\big[|g|\big]$, we have 
\begin{align}
\varrho_{f\odot g}=&\,\varrho_f\odot g+(-1)^{|f|}(f\odot\varrho_g)(\gamma_{\g,U}\odot\oone_V)\nn\\
=&\,\varrho_f\odot g+(-1)^{|f||g|}\gamma_{V',U'}(\varrho_g\odot f)(\oone_\g\odot \gamma_{U,V})\quad.\label{eq:varrho splits monprods}
\end{align}
\end{lem}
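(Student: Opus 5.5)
The plan is to prove the first equality directly from the definitions, and then obtain the second from the first using the naturality and involutivity of $\gamma$.

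For the first equality I would compute the arity-$i$ component of $\varrho_{f\odot g}$. By \eqref{eq:homotoper component of varrho psnat} this equals $(-1)^i(f\odot g)^{i+1}$. Expanding with \eqref{eq:odot of squiggly maps} gives a sum over $\j+k=i+1$ of terms $[f^j\otimes g^k]$, precomposed with the swap $1_{\j}\otimes s_{\k,U}\otimes 1_V$ and the shuffle $\Sigma_{\j,\k}$. The first of the $i$ Lie-algebra inputs is now singled out. I would split the shuffler $\Sigma_{\j,\k}$ using the unitality relation \eqref{eq:unitality of shuffler}, according to whether that distinguished input goes into the $f$-block or the $g$-block.

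The terms where it goes into $f$ reassemble into $(\varrho_f\odot g)^i$. Here I use $\varrho_f^{j-1}=(-1)^{j-1}f^j$ and check that the sign $(-1)^{(|f|-\j)\k}$ becomes $(-1)^{(|f|-1-(\j-1))\k}$ together with the factor $(-1)^i$. This is essentially the same manipulation as in the proof of Lemma \ref{lem:varrho map is a derivation}, which I would mirror.

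The terms where it goes into $g$ produce $g^k$ with the distinguished $\g$-input. To bring that input next to $V$, it must be passed across the $f$-inputs and across $U$; this is exactly what precomposition with $\gamma_{\g,U}\odot\oone_V$, together with the braid $s_{\k,U}$, accomplishes. The result is $(-1)^{|f|}(f\odot\varrho_g)(\gamma_{\g,U}\odot\oone_V)$. For this identification I would use the hexagon relation for $s$ and the naturality \eqref{eq:naturality of symmetric braiding}. The factor $(-1)^{|f|}$ arises from moving a degree-$1$ slot past $f$, matching the derivation sign seen in \eqref{eq:varrho is associative}.

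The main obstacle is the sign bookkeeping in this second family of terms. There one must confirm that the Koszul signs from $s_{\g,U}$, from reordering inside the shuffle, and from $(-1)^i$ versus $(-1)^{k-1}$ all combine to exactly $(-1)^{|f|}$. I would handle this first in the case $f=\oone_U$. The general case then follows formally: write $f\odot g=(f\odot\oone_{V'})(\oone_U\odot g)$ by \eqref{eq:(fx1)(1xg)=}, apply Lemma \ref{lem:varrho map is a derivation}, use \eqref{eq:varrho left unity} for $\varrho_{f\odot\oone_{V'}}$, and then simplify with \eqref{eq:functoriality of odot}. Indeed, the derivation formula gives
\[
\varrho_{f\odot\oone_{V'}}(\oone_\g\odot\oone_U\odot g)+(-1)^{|f|}(f\odot\oone_{V'})\varrho_{\oone_U\odot g}=\varrho_f\odot g+(-1)^{|f|}(f\odot\oone_{V'})\varrho_{\oone_U\odot g}.
\]
So it suffices to show $\varrho_{\oone_U\odot g}=(\oone_U\odot\varrho_g)(\gamma_{\g,U}\odot\oone_V)$. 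This is a sign-light check on components, since $\oone_U$ has only the arity-one component. After that, $(f\odot\oone_{V'})(\oone_U\odot\varrho_g)=f\odot\varrho_g$ follows by \eqref{eq:(fx1)(1xg)=}.

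For the second equality I would start from the $\gamma$-conjugate form. Using $\oone_U\odot g=\gamma_{V',U}(g\odot\oone_U)\gamma_{U,V}$, which follows from \eqref{eq:gamma's naturality} and \eqref{eq:gamma is involutive}, together with \eqref{eq:varrho annihilates gamma} and Lemma \ref{lem:varrho map is a derivation}, I get
\[
\varrho_{\oone_U\odot g}=\gamma_{V',U}(\varrho_g\odot\oone_U)(\oone_\g\odot\gamma_{U,V}).
\]
This is in fact an alternative proof of the auxiliary identity above. Substituting it and applying the naturality \eqref{eq:gamma's naturality} to move $f$ through $\gamma$ yields the stated expression $(-1)^{|f||g|}\gamma_{V',U'}(\varrho_g\odot f)(\oone_\g\odot\gamma_{U,V})$. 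The sign $(-1)^{|f|(|g|-1)}\cdot(-1)^{|f|}=(-1)^{|f||g|}$ comes from $|\varrho_g|=|g|-1$.
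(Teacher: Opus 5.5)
Your proposal is correct and follows essentially the paper's proof. You factor $f\odot g=(f\odot\oone_{V'})(\oone_U\odot g)$, apply the derivation rule of Lemma \ref{lem:varrho map is a derivation} with $\varrho_{f\odot\oone_{V'}}=\varrho_f\odot\oone_{V'}$, and obtain $\varrho_{\oone_U\odot g}=\gamma_{V',U}(\varrho_g\odot\oone_U)(\oone_\g\odot\gamma_{U,V})$ by $\gamma$-conjugation (your direct component check of this identity is a harmless alternative), after which naturality of $\gamma$ gives both forms, with the sign $(-1)^{|f|+|f|(|g|-1)}=(-1)^{|f||g|}$ exactly as you compute.
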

\begin{proof}
First off, we note that
\begin{equation}\label{eq:varrho right unity}
\varrho_{\oone_U\odot g}\overset{\eqref{eq:gamma's naturality}}{=}\varrho_{\gamma_{V',U}(g\odot\oone_U)\gamma_{U,V}}\overset{\eqref{eq:varrho is associative}}{=}\gamma_{V',U}\varrho_{g\odot\oone_U}(\oone_\g\odot\gamma_{U,V})\overset{\eqref{eq:varrho left unity}}{=}\gamma_{V',U}(\varrho_g\odot\oone_U)(\oone_\g\odot\gamma_{U,V})
\end{equation}
hence
\begin{align*}
\varrho_{f\odot g}\overset{\eqref{eq:functoriality of odot}}{=}\quad~&\varrho_{(f\odot\oone_{V'})(\oone_U\odot g)}\\
\overset{\eqref{eq:varrho is associative}}{=}\quad~&\varrho_{f\odot\oone_{V'}}(\oone_\g\odot[\oone_U\odot g])+(-1)^{|f|}(f\odot\oone_{V'})\varrho_{\oone_U\odot g}\\
\overset{\eqref{eq:varrho left unity},\eqref{eq:varrho right unity}}{=}&(\varrho_f\odot\oone_{V'})(\oone_\g\odot[\oone_U\odot g])+(-1)^{|f|}(f\odot\oone_{V'})\gamma_{V',U}(\varrho_g\odot\oone_U)(\oone_\g\odot\gamma_{U,V})\\
\overset{\eqref{eq:odot is associative},\eqref{eq:gamma's naturality}}{=}\,&\varrho_f\odot g+(-1)^{|f|}(f\odot\varrho_g)(\gamma_{\g,U}\odot\oone_V)
\end{align*}
and the second equality of \eqref{eq:varrho splits monprods} follows by using the fact that $\gamma$ is a symmetric braiding.
\end{proof}
\begin{cor}\label{cor:varrho monoidal properties}
Given representations $U$ and $V$, we have
\begin{align}\label{eq:varrho_UV}
\varrho_{U\odot V}=\varrho_U\odot\oone_V+\gamma_{V,U}(\varrho_V\odot\oone_U)(\oone_\g\odot \gamma_{U,V})\quad.
\end{align}
\end{cor}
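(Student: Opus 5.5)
The plan is to derive \eqref{eq:varrho_UV} directly from the identity $\varrho_U=-\varrho_{\rho_U}$ of \eqref{eq:varrho_U=-varrho_rho_U}, applied to the representation $U\odot V$. Its structure intertwiner is $\rho_{U,V}=\rho_U\odot\oone_V+\oone_U\odot\rho_V$ by \eqref{eq:rho_UV}, so
\begin{equation*}
\varrho_{U\odot V}=-\varrho_{\rho_U\odot\oone_V}-\varrho_{\oone_U\odot\rho_V}\quad,
\end{equation*}
using that $f\mapsto\varrho_f$ is evidently linear from its definition \eqref{eq:homotoper component of varrho psnat}.

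The two terms are then treated separately with the splitting formula \eqref{eq:varrho splits monprods} of the preceding lemma. For the first term one can either use \eqref{eq:varrho left unity} directly or apply \eqref{eq:varrho splits monprods} with $g=\oone_V$. The second summand in that formula contains $\varrho_{\oone_V}$, which vanishes by \eqref{eq:varrho kills units}. Either way,
\begin{equation*}
-\varrho_{\rho_U\odot\oone_V}=-\varrho_{\rho_U}\odot\oone_V=\varrho_U\odot\oone_V\quad.
\end{equation*}

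For the second term, apply the second form of \eqref{eq:varrho splits monprods} with $f=\oone_U$ and $g=\rho_V$. The first summand $\varrho_{\oone_U}\odot\rho_V$ vanishes by \eqref{eq:varrho kills units}. The sign $(-1)^{|f||g|}$ is trivial because $|\oone_U|=0$. What remains is $\gamma_{V,U}(\varrho_{\rho_V}\odot\oone_U)(\oone_\g\odot\gamma_{U,V})$; this is also exactly \eqref{eq:varrho right unity} with $g=\rho_V$. Negating and using $-\varrho_{\rho_V}=\varrho_V$ gives $\gamma_{V,U}(\varrho_V\odot\oone_U)(\oone_\g\odot\gamma_{U,V})$. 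Summing the two contributions yields \eqref{eq:varrho_UV}.

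The only step needing care is the bookkeeping of objects and signs. First, the identification $\varrho_{U\odot V}=-\varrho_{\rho_{U,V}}$ holds componentwise from the definitions: $\varrho_U^i=(-1)^{\i}\rho_U^{i+1}$ versus $\varrho^i_{\rho_U}=(-1)^i\rho_U^{i+1}$. It therefore applies verbatim to the representation $U\otimes V$ with action $\rho_{U,V}$. Second, when specialising \eqref{eq:varrho splits monprods}, the primed objects must be the same as the unprimed ones ($U'=U$, $V'=V$), so that the braidings appearing are $\gamma_{V,U}$ and $\gamma_{U,V}$ as claimed. Apart from these checks, no new computation is required.
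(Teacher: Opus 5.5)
Your proof is correct and is the paper's proof. The paper likewise writes $\varrho_{U\odot V}=-\varrho_{\rho_U\odot\oone_V+\oone_U\odot\rho_V}$ via \eqref{eq:varrho_U=-varrho_rho_U} and \eqref{eq:rho_UV}, splits both summands with \eqref{eq:varrho splits monprods} (the unit terms vanish by \eqref{eq:varrho kills units}), and converts back with \eqref{eq:varrho_U=-varrho_rho_U}; your componentwise sign check of $(-1)^{\i}$ versus $(-1)^{i}$ just spells out what the paper leaves implicit.
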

\begin{proof}
We compute
\begin{align}
\varrho_{U\odot V}\overset{\eqref{eq:varrho_U=-varrho_rho_U},\eqref{eq:rho_UV}}{=}-\varrho_{\rho_U\odot\oone_V+\oone_U\odot\rho_V}\overset{\eqref{eq:varrho splits monprods},\eqref{eq:varrho_U=-varrho_rho_U}}{=}\varrho_U\odot\oone_V+\gamma_{V,U}(\varrho_V\odot\oone_U)(\oone_\g\odot \gamma_{U,V})\,.
\end{align}
\end{proof}
We can likewise define a dg-functor $R_\g:\Rep\to\Rep$ of right-multiplication by $\g$ and define a dg-pseudonatural transformation $\lambda:R_\g\Rightarrow\id_{\Rep}:\Rep\to\Rep$ as follows: for every representation $U$, we define an equivariant map 
\begin{subequations}\label{subeq:lambda data}
\begin{equation}\label{eq:lambda_U:=}
\lambda_U:=\varrho_U\gamma_{U,\g}:U\odot\g\s U
\end{equation} 
and for every intertwiner $f:U\s U'\big[|f|\big]$, we set
\begin{equation}\label{eq:lambda_f:=}
\lambda_f:=\varrho_f\gamma_{U,\g}:U\odot\g\s U'\big[|f|-1\big]\quad.
\end{equation}
\end{subequations}
It is a simple matter to show that this data satisfies analogous relations to $\varrho$ but we will only make use of the following two:
\begin{alignat}{6}
\lambda_{gf}=&\,\lambda_g(f\odot\oone_\g)+(-1)^{|g|}g\lambda_f\quad&&,\label{eq:lambda is associative}\\
\lambda_{f\odot g}=&\,(\lambda_f\odot g)(\oone_U\odot \gamma_{V,\g})+(-1)^{|f|}f\odot\lambda_g&&.\label{eq:lambda_f,g}
\end{alignat}
We will also use the following lemma together with its corollary in the proof that the infinitesimal 2-braiding is coherent.
\begin{lem}
For every representation $U$,
\begin{equation}\label{eq:varrho is skew-symmetric}
\varrho_{\varrho_U}+\varrho_{\varrho_U}(\gamma_{\g,\g}\odot\oone_U)=0\quad.
\end{equation}
\end{lem}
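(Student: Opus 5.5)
The plan is to verify \eqref{eq:varrho is skew-symmetric} arity by arity, reducing it to the full skew-symmetry of the action maps $\rho_U^{i}$ from Definition \ref{def:squiggly notation}. Both terms are intertwiners $\g\odot(\g\odot U)\s U[-1]$, since $|\varrho_U|=0$ by Lemma \ref{lem: action map is g linear} and so $|\varrho_{\varrho_U}|=-1$. It therefore suffices to compare their components on $\g^{\otimes \i}\otimes\g\otimes\g\otimes U$ for each $i\geq1$.

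\emph{First term.} Apply \eqref{eq:homotoper component of varrho psnat} to the intertwiner $f=\varrho_U:\g\otimes U\s U$. Its $(i+1)$-th component is defined on $\g^{\otimes i}\otimes(\g\otimes U)$, so
\begin{equation*}
\varrho_{\varrho_U}^i=(-1)^i\varrho_U^{i+1}\overset{\eqref{eq:cochain component of varrho psnat}}{=}(-1)^i(-1)^{i}\rho_U^{i+2}=\rho_U^{i+2}\quad,
\end{equation*}
regarded as a map $\g^{\otimes\i}\otimes\g\otimes\g\otimes U\to U[-i]$. Here I will double-check that the bracketing of the domain agrees with the ordering of inputs of $\rho_U^{i+2}$. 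This holds because everything is strict and the last two $\g$-slots are exactly the slots carrying the $\g\odot(\g\odot U)$ factors.

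\emph{Second term.} The intertwiner $\gamma_{\g,\g}\odot\oone_U$ has degree $0$ and only an arity-one component. By \eqref{eq:odot of squiggly maps}, since both $\gamma_{\g,\g}$ and $\oone_U$ are concentrated in arity $1$, its single component is $(\gamma_{\g,\g}\odot\oone_U)^1=s_{\g,\g}\otimes1_U$. In the juxtaposition \eqref{subeq:juxtaposition of intertwiners}, only the term with $k=1$ (so $\k=0$) survives. Its sign $(-1)^{(|g|-\j)\k}$ is then trivial, and $\Sigma_{\i,0}$ is the identity. Hence
\begin{equation*}
\big[\varrho_{\varrho_U}(\gamma_{\g,\g}\odot\oone_U)\big]^i=\rho_U^{i+2}\,[1_{\i}\otimes s_{\g,\g}\otimes1_U]\quad.
\end{equation*}

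\emph{Conclusion.} The sum of the two components is $\rho_U^{i+2}+\rho_U^{i+2}[1_{\i}\otimes s_{\g,\g}\otimes 1_U]$. This vanishes because $\rho_U^{i+2}$ is fully graded skew-symmetric, in particular in its last two $\g$-entries. The Koszul sign in $s_{\g,\g}$ is precisely the sign appearing in the skew-symmetry convention stated after Definition \ref{def:squiggly notation}.

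The only real obstacle is the sign bookkeeping. One must check that the two factors of $(-1)^i$ from \eqref{eq:homotoper component of varrho psnat} and \eqref{eq:cochain component of varrho psnat} genuinely cancel with the indexing conventions used, namely $\varrho_U^{i+1}$ carrying the sign $(-1)^{i}$. One must also check that no hidden Koszul sign arises when juxtaposing with $\gamma_{\g,\g}\odot\oone_U$, which is guaranteed since it has degree $0$ and is concentrated in arity one. In any case, a uniform overall sign on $\varrho_{\varrho_U}$ would affect both terms equally and so could not spoil the cancellation.
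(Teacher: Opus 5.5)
Your proposal is correct and is essentially the paper's argument. The paper's proof is the one-line remark that the claim follows from full skew-symmetry of the structure maps $\rho_U^i$. Your computations $\varrho_{\varrho_U}^i=\rho_U^{i+2}$ and $\big[\varrho_{\varrho_U}(\gamma_{\g,\g}\odot\oone_U)\big]^i=\rho_U^{i+2}[1_{\i}\otimes s_{\g,\g}\otimes1_U]$ simply make that remark explicit arity by arity.
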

\begin{proof}
This claim is merely a matter of recalling that the structure maps $\rho_U^i$ are totally skew-symmetric as in Definition \ref{def:squiggly notation}.
\end{proof}
\begin{cor}
For every representation $U$,
\begin{subequations}\label{subeq:skew-sym of varrho and lambda}
\begin{alignat}{6}
\varrho_{\lambda_U}+\lambda_{\varrho_U}=&\,0\quad&&,\label{eq:varrho_lambda_U+lambda_varrho_U=0}\\
\lambda_{\lambda_U}+\lambda_{\lambda_U}(\oone_U\odot \gamma_{\g,\g})=&\,0&&.\label{eq:lambda is skew-symmetric}
\end{alignat}
\end{subequations}
\end{cor}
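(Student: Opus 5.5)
The plan is to derive both identities of the corollary from the skew-symmetry relation \eqref{eq:varrho is skew-symmetric}. The tools are the definitions \eqref{eq:lambda_U:=} and \eqref{eq:lambda_f:=} of $\lambda$, together with the composition rules \eqref{eq:varrho is associative} and \eqref{eq:lambda is associative}. Two facts about $\gamma$ will be used throughout: $\varrho_{\gamma_{U,V}}=0$ from \eqref{eq:varrho annihilates gamma}, and the analogous $\lambda_{\gamma_{U,V}}=\varrho_{\gamma_{U,V}}\gamma_{U\otimes V,\g}=0$. Consequently, whenever $\varrho$ or $\lambda$ is applied to a composite with $\gamma$ on either side, only the term in which $\gamma$ is carried along as a factor survives. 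All the $\gamma$'s are degree $0$, so no Koszul signs arise from them.

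For \eqref{eq:varrho_lambda_U+lambda_varrho_U=0}, first expand $\varrho_{\lambda_U}=\varrho_{\varrho_U\gamma_{U,\g}}$ using \eqref{eq:varrho is associative}. The $\gamma$ sits on the right, so the only surviving term is $\varrho_{\varrho_U}(\oone_\g\odot\gamma_{U,\g})$. On the other side, \eqref{eq:lambda_f:=} gives $\lambda_{\varrho_U}=\varrho_{\varrho_U}\gamma_{\g\otimes U,\g}$, since $\varrho_U$ has source $\g\odot U$. The hexagon axiom gives
\[
\gamma_{\g\otimes U,\g}=(\gamma_{\g,\g}\odot\oone_U)(\oone_\g\odot\gamma_{U,\g}),
\]
and hence $\lambda_{\varrho_U}=\varrho_{\varrho_U}(\gamma_{\g,\g}\odot\oone_U)(\oone_\g\odot\gamma_{U,\g})$. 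Adding the two expressions and factoring out $(\oone_\g\odot\gamma_{U,\g})$ on the right leaves
\[
\big[\varrho_{\varrho_U}+\varrho_{\varrho_U}(\gamma_{\g,\g}\odot\oone_U)\big](\oone_\g\odot\gamma_{U,\g}),
\]
which vanishes by \eqref{eq:varrho is skew-symmetric}.

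For \eqref{eq:lambda is skew-symmetric}, write $\lambda_{\lambda_U}=\lambda_{\varrho_U\gamma_{U,\g}}$ and apply \eqref{eq:lambda is associative}. Because $\lambda_{\gamma}=0$, this gives $\lambda_{\lambda_U}=\lambda_{\varrho_U}(\gamma_{U,\g}\odot\oone_\g)$. Substituting the expression for $\lambda_{\varrho_U}$ from the previous step yields
\[
\lambda_{\lambda_U}=\varrho_{\varrho_U}\,\gamma_{\g\otimes U,\g}\,(\gamma_{U,\g}\odot\oone_\g).
\]
The remaining task is purely combinatorial: a naturality/hexagon manipulation. I would show that
\[
\gamma_{\g\otimes U,\g}(\gamma_{U,\g}\odot\oone_\g)(\oone_U\odot\gamma_{\g,\g})=(\gamma_{\g,\g}\odot\oone_U)\,\gamma_{\g\otimes U,\g}(\gamma_{U,\g}\odot\oone_\g).
\]
Both sides are the same permutation of the three tensor factors, which can be checked directly on elements $u\otimes x\otimes y$. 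Given this identity, the left-hand side of \eqref{eq:lambda is skew-symmetric} becomes
\[
\big[\varrho_{\varrho_U}+\varrho_{\varrho_U}(\gamma_{\g,\g}\odot\oone_U)\big]\gamma_{\g\otimes U,\g}(\gamma_{U,\g}\odot\oone_\g),
\]
which again vanishes by \eqref{eq:varrho is skew-symmetric}.

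The main obstacle I expect is bookkeeping: correctly tracking the sources of each $\varrho$ and $\lambda$, in particular which $\g$ factor each $\gamma$ moves. The braid identity in the final step also needs to be verified carefully; it reduces to a check of permutations of $U\otimes\g\otimes\g$. Since every map involved is an honest degree-$0$ permutation intertwiner with only a single component, this check can be done at the level of underlying linear maps with Koszul signs, where it follows from the symmetric monoidal coherence of $(\Ch,\otimes,s)$.
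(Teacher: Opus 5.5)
Your proof is correct and follows the paper's route. You unfold $\lambda$ via \eqref{subeq:lambda data}, apply the derivation rule for $\varrho$ (and its $\lambda$-analogue \eqref{eq:lambda is associative}) with $\varrho_{\gamma}=\lambda_{\gamma}=0$, and reduce both identities to \eqref{eq:varrho is skew-symmetric} via symmetric-monoidal coherence of the $\gamma$'s, which makes explicit the braid bookkeeping the paper's one-line proof leaves implicit.
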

\begin{proof}
Simply use the definition of $\lambda$ in \eqref{subeq:lambda data} together with the fact that $\varrho$ is a derivation \eqref{eq:varrho is associative} and annihilates $\gamma$ as in \eqref{eq:varrho annihilates gamma}. 
\end{proof}
Now we put to work the formalism developed thus far by bringing the 2-shifted Poisson structure from Remark \ref{rem:2SPS} into the fray. 
\begin{lem}\label{lem:pi_2 is an intertwiner}
Given a 2-shifted Poisson structure \eqref{subeq:2SPS}, consider the weight 2 term \eqref{eq:weight 2 of 2SPS} and set 
\begin{equation}\label{eq:ell_2^i:=}
\left\{\varpi_2^i:=(-1)^i\pi_2^{\i}:{\Mwedge}^{\i}\g\to(\Sym^2\g)[-\i]\right\}_{i\in\bbN}\quad,
\end{equation}
this defines an equivariant map $\varpi_2:\bbK\s\Sym^2\g$. Consider the weight 3 term \eqref{eq:weight 3 of 2SPS} and set
\begin{equation}\label{eq:ell_3^i:=}
\left\{\varpi_3^i:=(-1)^i\pi_3^{\i}:{\Mwedge}^{\i}\g\to(\Sym^3\g)[-2-\i]\right\}_{i\in\bbN}\quad,
\end{equation}
this defines an intertwiner $\varpi_3:\bbK\s(\Sym^3\g)[-2]$ such that
\begin{equation}\label{eq:ell_3 is 2-homotopy}
\pb{\rho}{\varpi_3}=\Sigma_{2,1}^+(\varrho_{\varpi_2}\odot\oone_\g)\varpi_2\quad,
\end{equation}
where $\Sigma_{2,1}^+:=\{\Sigma_{2,1}^+:(\Sym^2\g)\otimes\g\to\Sym^3\g\}:(\Sym^2\g)\odot\g\s\Sym^3\g$ is an equivariant map. 
\end{lem}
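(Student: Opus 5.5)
The whole proof reduces to reading off the weight 2 and weight 3 Maurer--Cartan relations \eqref{eq:weight 2 MC} and \eqref{eq:weight 3 MC} in the language of intertwiners, using the dictionary of Remark \ref{rem:comparison to Allocca}. In that dictionary, juxtaposition of intertwiners becomes the bullet product and $\pb{\rho}{f}$ becomes $\rho_V\bullet f-(-1)^{|f|}f\bullet\rho_U$ via \eqref{eq:arity component of [rho,f]}.

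I first record the degrees. $\varpi_2^i$ has degree $-\i$, so by Definition \ref{def:squiggly notation} we get $|\varpi_2|=0$. Likewise $\varpi_3^i$ has degree $-2-\i$, so $|\varpi_3|=-2$. Since $U=\bbK$, full skew-symmetry is inherited from the skew-symmetry of $\pi_w^{\i}$ on $\bigwedge^{\i}\g$. The source $\bbK$ carries $\rho_\bbK=0$. The targets $\Sym^2\g$ and $\Sym^3\g$ carry the representation structure induced from the adjoint representation via \eqref{eq:rho_UV}; more precisely, they are the images of $\g^{\odot 2}$ and $\g^{\odot 3}$ under the symmetrisers, which commute with $\rho$ by naturality of $\gamma$ \eqref{eq:gamma's naturality}. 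As a by-product, this shows that $\Sigma^+_{2,1}$ is equivariant: it is a sum of juxtapositions of symmetric braidings $\gamma$, all of which are equivariant by Corollary \ref{cor:gamma is equivariant}(i).

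Next I expand $\pb{\rho}{\varpi_2}^i$ with \eqref{eq:arity component of [rho,f]}, taking $\rho_U=0$. Only the term $\rho_{\Sym^2\g}^j[1_{\j}\otimes\varpi_2^k]\Sigma_{\j,k}$ survives. Here $\rho_{\Sym^2\g}^j$ includes the arity-one term $\ell^1$, which is absorbed through the convention $\rho^k(x)=\pi_1^k$, and acts as $\Sigma^+_{1,1}[\pi_1^j\otimes 1_\g]$ on $\Sym^2\g$. The arity-zero action (the differential) is handled by the extended convention of Remark \ref{rem:comparison to Allocca}, which merges the $\pi_1$-insertions on $\g^{\otimes\i}$ into the same shuffle sum. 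After substituting $\varpi_2^k=(-1)^k\pi_2^{\k}$ and reindexing, each arity component should match \eqref{eq:weight 2 MC} term by term. The term $\Sigma^+_{1,1}[\pi_1^j\otimes1_\g][1_{\j}\otimes\pi_2^k]$ comes from the action on the target. The term $\pi_2^j[1_{\j}\otimes\pi_1^k]$ comes from the $\varrho_f\ell_U$ part of \eqref{eq:diff of squiggly map}, which is where the extended convention for $\rho$ on $\g$-inputs is used. Hence $\pb{\rho}{\varpi_2}=0$.

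For $\varpi_3$ the same expansion reproduces the first two terms of \eqref{eq:weight 3 MC}. I then compute the right-hand side of \eqref{eq:ell_3 is 2-homotopy} using \eqref{eq:homotoper component of varrho psnat}, which gives $\varrho^j_{\varpi_2}=(-1)^j\varpi_2^{j+1}=\pm\pi_2^{j}$, and juxtapose with $\varpi_2$ via \eqref{subeq:juxtaposition of intertwiners}. This yields $\Sigma^+_{2,1}[\pi_2^j\otimes1_\g][1_{\j}\otimes\pi_2^k]\Sigma_{\j,k}$, which is the third term of \eqref{eq:weight 3 MC}, up to sign. Moving it to the other side gives \eqref{eq:ell_3 is 2-homotopy}.

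The main obstacle is sign bookkeeping. Three sources of signs must be reconciled: the reindexing signs $(-1)^i$ in \eqref{eq:ell_2^i:=} and \eqref{eq:ell_3^i:=}, the $(-1)^{(|g|-\j)\k}$ in juxtaposition, and the $(-1)^{j\k}$ in \eqref{eq:2sps mc}. A further issue is that the shufflers $\Sigma_{\j,\k}\otimes 1_U$ with $U=\bbK$ must be converted into $\Sigma_{\j,k}$ using \eqref{eq:unitality of shuffler}. I would handle this by checking the overall sign on a single representative term of each type, and then argue that the relative signs within each sum agree by \eqref{eq:symmetry of shuffler}.
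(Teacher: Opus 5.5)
Your proposal follows the paper's proof essentially step for step. You expand $\pb{\rho}{\varpi_2}$ and $\pb{\rho}{\varpi_3}$ via \eqref{eq:arity component of [rho,f]}, match the target-action term and the $\varrho_f\ell_U$ term with the first two summands of \eqref{eq:weight 2 MC} and \eqref{eq:weight 3 MC}, and recognise $\Sigma^+_{2,1}(\varrho_{\varpi_2}\odot\oone_\g)\varpi_2$ as the $\pi_2$--$\pi_2$ summand. The only difference is that you prove $\Sigma^+_{2,1}$ is equivariant structurally, as a sum of braidings (Corollary \ref{cor:gamma is equivariant}), where the paper checks it componentwise. One wording slip: with $\rho_\bbK=0$ it is not true that only the $\rho_{\Sym^2\g}$ term survives, because $\varrho_{\varpi_2}\ell_\bbK$ (the $\pi_1$-insertions on the $\g$-inputs) remains; you use exactly that term later, so the argument itself is sound.
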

\begin{proof}
First we prove that \eqref{eq:ell_2^i:=} defines an equivariant map as in Definition \ref{defi: weak rep}; we have
\begin{align}
\pb{\rho}{\varpi_2}^{i+1}&\overset{\eqref{eq:arity component of [rho,f]}}{=}\sum_{\j+k+1=i+1}(-1)^{jk}\rho_{\g,\g}^j[1_{\j}\otimes\varpi_2^{k+1}]\Sigma_{\j,k}-\sum_{j+k=i+1}(-1)^{\j\k}\varpi_2^{j+1}(1_{\j}\otimes\pi_1^k)\Sigma_{\j,k}\nn\\
&\overset{\eqref{eq:ell_2^i:=}}{=}(-1)^i\sum_{\j+k=i}(-1)^{j\k}\left([\pi_1^j\otimes1_\g+(1_\g\otimes\pi_1^j)(s_{\j,\g}\otimes1_\g)][1_{\j}\otimes\pi_2^k]+\pi_2^j[1_{\j}\otimes\pi_1^k]\right)\Sigma_{\j,k}\nn\\
&\overset{\eqref{eq:weight 2 MC}}{=}0\quad.
\end{align}
Let us quickly remark that it is evident, using \eqref{eq:arity component of [rho,f]}, that $\Sigma^+_{2,1}$ is actually an equivariant map; now we prove that relation \eqref{eq:ell_3 is 2-homotopy} holds,
\begin{align*}
[\Sigma_{2,1}^+(\varrho_{\varpi_2}\odot\oone_\g)\varpi_2]^i\overset{\eqref{eq:homotoper component of varrho psnat}}{=}&\sum_{\j+\k=i}(-1)^{\j k}\Sigma_{2,1}^+[\varpi_2^j\otimes1_\g][1_{\j-1}\otimes\varpi_2^k]\Sigma_{\j-1,\k}\\
\overset{\eqref{eq:weight 3 of 2SPS}}{=}&(-1)^{\i}\sum_{\j+k=i}(-1)^{\j k}(\Sigma_{1,2}^+[\pi_1^j\otimes1_{\g^{\otimes2}}][1_{\j}\otimes\varpi_3^k]\Sigma_{\j,\k}-\varpi_3^j[1_{\j-1}\otimes\pi_1^k]\Sigma_{\j-1,\k})\\
\overset{\eqref{eq:arity component of [rho,f]}}{=}&[\rho,\varpi_3]^i\quad.\numberthis
\end{align*}
\end{proof}
\begin{cor}
Given representations $U$ and $V$, if we set
\begin{equation}
\varpi^2_{U,V}:=\oone_U\odot\varpi_2\odot\oone_V:U\odot V\s U\odot\g\odot\g\odot V
\end{equation}
then this defines a dg-natural transformation $\varpi^2:\odot\Rightarrow\odot(R_\g\cmp L_\g)$.
\end{cor}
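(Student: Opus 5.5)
The claim has three parts: each component $\varpi^2_{U,V}$ is a morphism in $\Rep$ between the right objects, it is a $0$-cocycle, and the family is natural with respect to all intertwiners (not only equivariant ones).

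\textbf{Well-typedness.} Since $\Sym^2\g\subseteq\g\otimes\g$ as the symmetric part, we regard $\varpi_2$ of Lemma \ref{lem:pi_2 is an intertwiner} as an intertwiner $\bbK\s\g\odot\g$. The inclusion $\Sym^2\g\hookrightarrow\g\otimes\g$ is an arity-one intertwiner, and it is equivariant for the same reason that $\Sigma^+_{2,1}$ was. By strict unitality, $U\odot\bbK\odot V=U\odot V$. Hence $\varpi^2_{U,V}=\oone_U\odot\varpi_2\odot\oone_V$ is a well-defined degree $0$ intertwiner $U\odot V\s U\odot\g\odot\g\odot V$. This target is exactly $\odot(R_\g\cmp L_\g)(U,V)$, using the associativity of $\odot$ from Corollary \ref{cor:odot is functorial}.

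\textbf{Cocycle condition.} We apply the derivation property \eqref{eq:odot is cochain} twice:
\[
\pb{\rho}{\oone_U\odot\varpi_2\odot\oone_V}=\pb{\rho}{\oone_U}\odot\varpi_2\odot\oone_V+\oone_U\odot\pb{\rho}{\varpi_2}\odot\oone_V+\oone_U\odot\varpi_2\odot\pb{\rho}{\oone_V}.
\]
There are no extra signs because all three factors have degree $0$. The first and last terms vanish because identities are equivariant (Proposition \ref{propo:Rep is a dg-cat}). The middle term vanishes by Lemma \ref{lem:pi_2 is an intertwiner}. So every component lies in $Z^0$.

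\textbf{Naturality.} Take intertwiners $f:U\s U'[|f|]$ and $g:V\s V'[|g|]$. The image of $f\odot g$ under $\odot(R_\g\cmp L_\g)$ is $(f\odot\oone_\g)\odot(\oone_\g\odot g)=f\odot\oone_\g\odot\oone_\g\odot g$. We need
\[
(f\odot\oone_{\g\odot\g}\odot g)(\oone_U\odot\varpi_2\odot\oone_V)=(\oone_{U'}\odot\varpi_2\odot\oone_{V'})(f\odot g).
\]
We compute both sides with the middle-four interchange \eqref{eq:functoriality of odot}, used twice and together with associativity \eqref{eq:odot is associative}. The left side becomes $(-1)^{|\varpi_2||g|}f\odot\varpi_2\odot g$. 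The right side becomes $(-1)^{|\varpi_2||f|}f\odot\varpi_2\odot g$, with $\varpi_2$ treated as the composite $\varpi_2\oone_\bbK$. Both signs are trivial since $|\varpi_2|=0$, so the two sides agree.

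The only step needing care is the bookkeeping: justifying the identification of $\Sym^2\g$ with a subrepresentation of $\g\odot\g$, and checking that the unit $\bbK$ can be inserted so that $f\odot g=f\odot\oone_\bbK\odot g$. The latter follows from item (iii) of Proposition \ref{propo: sym dg}. I expect no genuine obstacle beyond this.
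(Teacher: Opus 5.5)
Your proposal is correct and matches the argument the paper leaves implicit. The paper states this corollary without proof, as an immediate consequence of Lemma \ref{lem:pi_2 is an intertwiner}: the cocycle property follows from the derivation rule \eqref{eq:odot is cochain}, and naturality follows from the interchange law \eqref{eq:functoriality of odot}, exactly as you argue. Your extra bookkeeping (viewing $\varpi_2$ as landing in $\g\odot\g$, which the paper's proof of that lemma already does tacitly by using $\rho_{\g,\g}$, and inserting $\oone_\bbK$) is appropriate care rather than a departure from that route.
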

\begin{constr}\label{con:defining t}
Denoting $\CC:=\Ho_2[\Rep]$, we define $t:\odot\Rightarrow\odot:\CC\smallbox\CC\to\CC$ as the following vertical composite,
\begin{equation}
\begin{tikzcd}
	\odot && \odot \\
	& {\odot(R_\g\smallbox L_\g)}
	\arrow["t", Rightarrow, dashed, from=1-1, to=1-3]
	\arrow["{\varpi^2}"', Rightarrow, from=1-1, to=2-2]
	\arrow["{\Id_\odot*(\lambda\smallbox\varrho)}"', Rightarrow, from=2-2, to=1-3]
\end{tikzcd}\qquad,
\end{equation}
i.e. we define
\begin{equation}\label{eq:t as vertical composite}
t:=(\Id_\odot*[\lambda\smallbox\varrho])\circ\varpi^2\quad.
\end{equation}
Given representations $U$ and $V$, we use \eqref{eq:object components of vercomp}, \eqref{eq:object components of horcomp} and \eqref{eq:object components of moncomp} to derive
\begin{equation}\label{eq:t_U,V}
t_{U,V}:=(\lambda_U\odot\varrho_V)(\oone_U\odot\varpi_2\odot\oone_V):U\odot V\s U\odot V\quad.
\end{equation}
Given equivariant maps $f:U\s U'$ and $g:V\s V'$, we use \eqref{eq:hom components of vercomp pseudos}, \eqref{eq:homotoper components of horcomp pseudos} and \eqref{eq:homotoper components of moncomp pseudos} to derive
\begin{equation}\label{eq:t_(f,g)}
t_{f,g}:=\left(\lambda_f\odot g\varrho_V+\lambda_{U'}[f\odot\oone_\g]\odot\varrho_g\right)(\oone_U\odot\varpi_2\odot\oone_V):U\odot V\s U'\odot V'[-1]\quad.
\end{equation}
\end{constr}
\begin{theo}\label{theo:2SPS induces coherent inf2bra}
The pseudonatural transformation $t:\odot\Rightarrow\odot$ of Construction \ref{con:defining t} defines a totally $\gamma$-equivariant coherent infinitesimal 2-braiding as in Definition \ref{def: coherency}.
\end{theo}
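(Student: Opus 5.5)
The plan is to verify the four properties in increasing order of difficulty: pseudonaturality, $\gamma$-equivariance, the infinitesimal hexagon relations with total $\gamma$-equivariance, and finally coherence.

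\textbf{Pseudonaturality.} This should be essentially formal. Lemma \ref{lem: action map is g linear} together with Lemma \ref{lem:varrho map is a derivation} says that $\varrho$ is a dg-pseudonatural transformation $L_\g\Rightarrow\id_{\Rep}$. The analogous statement for $\lambda$ follows from \eqref{subeq:lambda data}, \eqref{eq:lambda is associative} and the equivariance of $\gamma$. Moreover $\varpi^2$ is dg-natural by the Corollary following Lemma \ref{lem:pi_2 is an intertwiner}. Passing to $\CC=\Ho_2[\Rep]$ turns all of these into pseudonatural transformations. Since vertical, horizontal and monoidal composites of pseudonatural transformations are again pseudonatural, $t$ as defined in \eqref{eq:t as vertical composite} is pseudonatural with components \eqref{eq:t_U,V} and \eqref{eq:t_(f,g)}.

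\textbf{$\gamma$-equivariance.} I would write $\lambda_U=\varrho_U\gamma_{U,\g}$ and use the naturality \eqref{eq:gamma's naturality} together with the hexagon axiom. This rewrites $\gamma_{U,V}t_{U,V}$ as $(\lambda_V\odot\varrho_U)(\oone_V\odot(\gamma_{\g,\g}\varpi_2)\odot\oone_U)\gamma_{U,V}$. Since $\varpi_2$ lands in $\Sym^2\g$, we have $\gamma_{\g,\g}\varpi_2=\varpi_2$, which gives \eqref{intertwine single-index}. The same argument applied to \eqref{eq:t_(f,g)} gives \eqref{intertwine homotopy}, but only after using the alternative form \eqref{eq:alternative moncomp of pseudos}, which is where the truncation enters.

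\textbf{Hexagon relations and total $\gamma$-equivariance.} By Lemma \ref{lem:sym t then half conditions} it suffices to prove the left infinitesimal hexagon \eqref{subeq:t_U(VW)}.
\begin{itemize}
\item For $t_{U,V\odot W}$, I would insert Corollary \ref{cor:varrho monoidal properties}: the first summand of \eqref{eq:varrho_UV} gives $t_{U,V}\odot\oone_W$, and the second gives the $\gamma$-conjugate of $t_{U,W}\odot\oone_V$.
\item For $t_{f,g\odot h}$, I would use \eqref{eq:varrho splits monprods} in the same way.
\end{itemize}
Total $\gamma$-equivariance then follows directly. We have $\lambda_{\gamma_{U,V}}=\varrho_{\gamma_{U,V}}\gamma=0$ by \eqref{eq:varrho annihilates gamma}, and $\varrho_{\oone_W}=0$ by \eqref{eq:varrho kills units}, so both summands of $t_{\gamma_{U,V},1_W}$ vanish.

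\textbf{Coherence: the main obstacle.} Because $\varrho_{\oone_W}=0$, the first two terms reduce to
\[
t_{t_{U,V},1_W}=(\lambda_{t_{U,V}}\odot\varrho_W)(\oone_{U\odot V}\odot\varpi_2\odot\oone_W),
\]
and similarly $t_{1_U,t_{V,W}}$ involves only $\varrho_{t_{V,W}}$. I would expand $\lambda_{t_{U,V}}$ with the derivation rule \eqref{eq:lambda is associative} and the splitting \eqref{eq:lambda_f,g}, applied to $t_{U,V}=(\lambda_U\odot\varrho_V)(\oone_U\odot\varpi_2\odot\oone_V)$. This produces three kinds of terms:
\begin{itemize}
\item terms containing $\lambda_{\lambda_U}$ or $\varrho_{\varrho_V}$;
\item mixed terms containing $\lambda_{\varrho_V}$ or $\varrho_{\lambda_U}$;
\item terms containing $\varrho_{\varpi_2}$ or $\lambda_{\varpi_2}$, contracted against a second copy of $\varpi_2$.
\end{itemize}
I expect the first two kinds to cancel across the cyclic sum of the three coherence terms, including the $\gamma$-conjugated $(U,W)$ term. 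The tools for this are the skew-symmetry relations \eqref{eq:varrho is skew-symmetric} and \eqref{subeq:skew-sym of varrho and lambda}, combined with the symmetry of $\varpi_2\otimes\varpi_2$ under swapping the two copies. The bookkeeping of Koszul signs and $\gamma$-conjugations is the delicate part.

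The surviving terms should assemble into a single expression in which the element $\Sigma^+_{2,1}(\varrho_{\varpi_2}\odot\oone_\g)\varpi_2\colon\bbK\s\Sym^3\g$ is fed into the three actions $\lambda_U$, $\varrho_V$ (suitably combined) and $\varrho_W$. By \eqref{eq:ell_3 is 2-homotopy} this element equals $\pb{\rho}{\varpi_3}$. The actions are equivariant maps and the hom differential is a derivation (Corollaries \ref{cor:[rho,cdot] is derivation} and \ref{cor:gamma is equivariant}). Hence the whole sum is $\pb{\rho}{\,\cdot\,}$ of a degree $-2$ morphism obtained by replacing the $\Sym^3$ element with $\varpi_3$. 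Such an exact degree $-1$ morphism is zero in the truncated hom-complex $\tau^{[-1,0]}$ of $\Ho_2[\Rep]$, which proves coherence. If the direct expansion proves unwieldy, I would first prove coherence for $U=V=W=\g$ and then transport it using the monoidal properties of $\varrho$ and $\lambda$.
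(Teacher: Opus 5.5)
Your proposal is correct and follows the paper's proof essentially step for step:
\begin{itemize}
\item $\gamma$-equivariance via $\lambda_U=\varrho_U\gamma_{U,\g}$, naturality of $\gamma$ and symmetry of $\varpi_2$, with \eqref{eq:alternative moncomp of pseudos} handling the homotopy components;
\item the left hexagon via \eqref{eq:varrho_UV} and \eqref{eq:varrho splits monprods};
\item total $\gamma$-equivariance from $\varrho_{\oone_W}=0=\varrho_{\gamma_{U,V}}$;
\item coherence by expanding $\lambda_{t_{U,V}}$ and $\varrho_{t_{V,W}}$, cancelling the $\lambda_{\lambda_U}$, $\varrho_{\varrho_W}$ and mixed terms pairwise with \eqref{eq:varrho is skew-symmetric} and \eqref{subeq:skew-sym of varrho and lambda}, and recognising the three surviving $\varrho_{\varpi_2}$-terms as $\pb{\rho}{t_{U,V,W}}$ for a degree $-2$ map built from $\varpi_3$, which the truncation kills.
\end{itemize}
The only thing to drop is your fallback of proving coherence for $U=V=W=\g$ and transporting it, which the argument does not need and which would require a transport mechanism you have not supplied.
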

\begin{proof}
Let us first show $\gamma$-equivariance as in Definition \ref{def:sym t}; we note that
\begin{subequations}\label{subeq:proof of t_U,V symmetry}
\begin{equation}
\gamma_{U,V}(\lambda_U\odot\varrho_V)\overset{\eqref{eq:lambda_U:=}}{=}\gamma_{U,V}(\varrho_U\odot\lambda_V)(\gamma_{U,\g}\odot \gamma_{\g,V})\overset{\eqref{eq:gamma's naturality}}{=}(\lambda_V\odot\varrho_U)\gamma_{\g U,V\g}(\gamma_{U,\g}\odot \gamma_{\g,V})
\end{equation}
and
\begin{equation}
(\oone_U\odot\varpi_2\odot\oone_V)\gamma_{V,U}=\gamma_{\g\g V,U}(\varpi_2\odot\oone_{UV})=\gamma_{\g\g V,U}(\gamma_{V,\g\g}\odot\oone_U)(\oone_V\odot\varpi_2\odot\oone_U)
\end{equation}
\end{subequations}
thus
\begin{equation}
\gamma_{U,V}t_{U,V}\gamma_{V,U}\overset{\eqref{subeq:proof of t_U,V symmetry}}{=}(\lambda_V\odot\varrho_U)(\oone_V\odot \gamma_{\g,\g}\odot\oone_U)(\oone_V\odot\varpi_2\odot\oone_U)=t_{V,U}\quad.
\end{equation}
To prove the second relation \eqref{intertwine homotopy}, we simply note
\begin{equation}
\big[\Id_\odot*(\lambda\smallbox\varrho)\big]_{f,g}\overset{\eqref{eq:alternative moncomp of pseudos}}{=}f\lambda_U\odot\varrho_g+\lambda_f\odot\varrho_{V'}[\oone_\g\odot g]=(f\varrho_U\odot\lambda_g+\varrho_f\odot\lambda_{V'}[g\odot\oone_\g])(\gamma_{U,\g}\odot \gamma_{\g,V})\,.
\end{equation}
Let us now show that the left infinitesimal hexagon relations \eqref{subeq:t_U(VW)} are satisfied; we have 
\begin{align*}
t_{U,V\odot W}\overset{\eqref{eq:varrho_UV}}{=}&(\lambda_U\odot\varrho_V\odot\oone_W+\lambda_U\odot \gamma_{W,V}[\varrho_W\odot\oone_V][\oone_\g\odot \gamma_{V,W}])(\oone_U\odot\varpi_2\odot\oone_{V\odot W})\\
\overset{\eqref{eq:naturality of symmetric braiding}}{=}&[\lambda_U\odot\varrho_V][\oone_U\odot\varpi_2\odot\oone_V]\odot\oone_W\\&+[\oone_U\odot \gamma_{W,V}][(\lambda_U\odot\varrho_W)(\oone_U\odot\varpi_2\odot\oone_W)\odot\oone_V][\oone_U\odot \gamma_{V,W}]\numberthis
\end{align*}
and
\begin{align*}
t_{f,g\odot h}\overset{\eqref{eq:varrho splits monprods}}{=}&\Big(\lambda_f\odot(g\odot h)(\varrho_V\odot\oone_W+\gamma_{W,V}[\varrho_W\odot\oone_V][\oone_\g\odot \gamma_{V,W}])\numberthis\\&+\lambda_{U'}(f\odot\oone_\g\big)\odot(\varrho_g\odot h+\gamma_{W',V'}[\varrho_h\odot g][\oone_\g\odot \gamma_{V,W}])\Big)\big(\oone_U\odot\varpi_2\odot\oone_{VW}\big)\\
\overset{\eqref{eq:naturality of symmetric braiding}}{=}&(\lambda_f\odot g\varrho_V+\lambda_{U'}[f\odot\oone_\g]\odot\varrho_g)(\oone_U\odot\varpi_2\odot\oone_V)\odot h\\
&+\big(\oone_{U'}\odot \gamma_{W',V'}\big)\Big((\lambda_f\odot h\varrho_W+\lambda_{U'}[f\odot\oone_\g]\odot\varrho_h)(\oone_U\odot\varpi_2\odot\oone_W)\odot g\Big)\big(\oone_U\odot \gamma_{V,W}\big)\,.
\end{align*}
Using \eqref{eq:varrho kills units} and \eqref{eq:varrho annihilates gamma}, we have $t_{\gamma_{U,V},\oone_W}=0$ hence the infinitesimal 2-braiding is indeed totally $\gamma$-equivariant. We now prove that $t$ is coherent, we have
\begin{equation}
t_{t_{U,V},\oone_W}\overset{\eqref{eq:varrho kills units}}{=}(\lambda_{t_{U,V}}\odot\varrho_W)(\oone_{UV}\odot\varpi_2\odot\oone_W)
\end{equation}
and we can expand $\lambda_{t_{U,V}}$ as follows
\begin{align}
\lambda_{t_{U,V}}\overset{\eqref{eq:lambda is associative}}{=}&\lambda_{\lambda_U\odot\varrho_V}(\oone_U\odot\varpi_2\odot\oone_{V\g})+(\lambda_U\odot\varrho_V)\lambda_{\oone_U\odot\varpi_2\odot\oone_V}\\
\overset{\eqref{eq:lambda_f,g}}{=}&([\lambda_{\lambda_U}\odot\varrho_V][\oone_{U\g}\odot \gamma_{\g V,\g}]+\lambda_U\odot\lambda_{\varrho_V})(\oone_U\odot\varpi_2\odot\oone_{V\g})\nn\\&+(\lambda_U\odot\varrho_V)(\oone_U\odot[\lambda_{\varpi_2}\odot\oone_V]\gamma_{V,\g})\nn
\end{align}
but we obviously have $\lambda_{\varpi_2}=\varrho_{\varpi_2}$ so
\begin{align}
t_{t_{U,V},\oone_W}=\,&(\lambda_U\odot\lambda_{\varrho_V}\odot\varrho_W+[\lambda_{\lambda_U}\odot\varrho_V][\oone_{U\g}\odot \gamma_{\g V,\g}]\odot\varrho_W)(\oone_U\odot\varpi_2\odot\oone_V\odot\varpi_2\odot\oone_W)\nn\\&+(\lambda_U\odot\varrho_V\odot\varrho_W)(\oone_U\odot \gamma_{V,\g\g}\odot\oone_{\g W})(\oone_{UV}\odot[\varrho_{\varpi_2}\odot\oone_\g]\varpi_2\odot\oone_W)\quad.
\end{align}
Likewise,
\begin{equation}
t_{\oone_U,t_{V,W}}\overset{\eqref{eq:varrho kills units}}{=}(\lambda_U\odot\varrho_{t_{V,W}})(\oone_{U}\odot\varpi_2\odot\oone_{VW})
\end{equation}
and
\begin{align}
\varrho_{t_{V,W}}\overset{\eqref{eq:varrho is associative}}{=}&\varrho_{\lambda_V\odot\varrho_W}(\oone_{\g V}\odot\varpi_2\odot\oone_W)+(\lambda_V\odot\varrho_W)\varrho_{\oone_V\odot\varpi_2\odot\oone_W}\\
\overset{\eqref{eq:varrho splits monprods}}{=}&(\varrho_{\lambda_V}\odot\varrho_W+[\lambda_V\odot\varrho_{\varrho_W}][\gamma_{\g,V\g}\odot\oone_{\g W}])(\oone_{\g V}\odot\varpi_2\odot\oone_W)\nn\\&+(\lambda_V\odot\varrho_W)([\oone_V\odot\varrho_{\varpi_2}]\gamma_{\g,V}\odot\oone_W)\nn
\end{align}
so 
\begin{align}\label{eq:t_23}
t_{\oone_U,t_{V,W}}=\,&(\lambda_U\odot\varrho_{\lambda_V}\odot\varrho_W+\lambda_U\odot[\lambda_V\odot\varrho_{\varrho_W}][\gamma_{\g,V\g}\odot\oone_{\g W}])(\oone_U\odot\varpi_2\odot\oone_V\odot\varpi_2\odot\oone_W)\nn\\&+(\lambda_U\odot\lambda_V\odot\varrho_W)(\oone_{U\g}\odot \gamma_{\g\g,V}\odot\oone_W)(\oone_U\odot[\oone_\g\odot\varrho_{\varpi_2}]\varpi_2\odot\oone_{VW})\quad.
\end{align}
With a view to later using \eqref{eq:ell_3 is 2-homotopy}, we rewrite the last term of \eqref{eq:t_23} as
\begin{align}
&(\lambda_U\odot\varrho_V\odot\varrho_W)(\oone_{U\g}\odot [\gamma_{V,\g}\odot\oone_\g]\gamma_{\g\g,V}\odot\oone_W)(\oone_U\odot \gamma_{V,\g\g\g}\odot\oone_W)(\oone_{UV}\odot[\oone_\g\odot\varrho_{\varpi_2}]\varpi_2\odot\oone_W)\nn\\&=(\lambda_U\odot\varrho_V\odot\varrho_W)(\oone_U\odot \gamma_{V,\g\g}\odot\oone_{\g W})(\oone_{UV}\odot[\oone_\g\odot\varrho_{\varpi_2}]\varpi_2\odot\oone_W)\quad.
\end{align}
A straightforward (but tedious) exercise in using the coherency of $\gamma$ reveals that we have the following expression for $(\oone_U\odot \gamma_{W,V})t_{t_{U,W},\oone_V}(\oone_U\odot \gamma_{V,W})$,
\begin{align}
&(\lambda_U\odot[\lambda_V\odot\varrho_{\varrho_W}][\gamma_{\g,V\g\g}\odot\oone_W]+[\lambda_{\lambda_U}\odot\varrho_V][\oone_U\odot \gamma_{\g\g V,\g}]\odot\varrho_W)(\oone_U\odot\varpi_2\odot\oone_V\odot\varpi_2\odot\oone_W)\nn\\&\qquad+(\lambda_U\odot\varrho_V\odot\varrho_W)(\oone_U\odot \gamma_{V,\g\g}\odot\oone_{\g W})(\oone_{UV}\odot[\oone_\g\odot \gamma_{\g,\g}][\varrho_{\varpi_2}\odot\oone_\g]\varpi_2\odot\oone_W)\quad.
\end{align}
Finally, using \eqref{eq:ell_3 is 2-homotopy} together with \eqref{eq:varrho is skew-symmetric} and \eqref{subeq:skew-sym of varrho and lambda} gives us
\begin{align}
t_{t_{U,V},\oone_W}+t_{\oone_U,t_{V,W}}+(\oone_U\odot \gamma_{W,V})t_{t_{U,W},\oone_V}(\oone_U\odot \gamma_{V,W})=\pb{\rho}{t_{U,V,W}}\quad,
\end{align}
where 
\begin{equation}
t_{U,V,W}:=([\lambda_U\odot\varrho_V][\oone_U\odot \gamma_{V,\g\g}]\odot\varrho_W)(\oone_{UV}\odot\varpi_3\odot\oone_W)
\end{equation}
is a degree 2-homotopy $t_{U,V,W}:U\odot V\odot W\s U\odot V\odot W[-2]$ thus killed by the truncation.
\end{proof}

\newpage
\section{Representation theory as derived geometry}\label{sec:geometric model}
Throughout this section, we work with only finite-dimensional homotopy Lie algebras $\g=(\g,\ell)$. The objective of this section is to show that the symmetric monoidal dg-category $\Rep$ of Proposition \ref{propo: sym dg} admits a symmetric monoidal dg-equivalence (as in Definition \ref{def:sym mon dg-equivalence}) to a symmetric monoidal dg-category of certain dg-modules. Subsection \ref{subsec:CDGAs} begins by recalling the definition of finitely generated semi-free CDGAs $A$ from \cite[Definition 2.1]{KLS25}. Remark \ref{rem:need completed semi-free CDGAs} cautions that such CDGAs are not `big enough' to correspond to finite-dimensional homotopy Lie algebras $\g$ hence Definition \ref{def:total completed semi-free CDGA} introduces the notion of a completed semi-free CDGA. Proposition \ref{propo:CE of fd L infinity algebra} shows that finite-dimensional homotopy Lie algebras are in one-to-one correspondence with augmented finitely generated completed semi-free CDGAs. 
\sk

Subsection \ref{subsec:equivalence} begins by recalling from \cite[Subsection 2.2]{KLS24} the symmetric monoidal dg-category ${}_A\dgMod^\sf$ of semi-free dg-modules over a CDGA $A$. Likewise to the above-mentioned, Remark \ref{rem:curved representation -> no mon prod} cautions that semi-free dg-modules are similarly not `big enough' to correspond to representations hence Definition \ref{def:completed semi-free dg-module} introduces the notion of a completed semi-free dg-module over a completed semi-free CDGA. Proposition \ref{propo:CE of rep} shows that representations are in one-to-one correspondence with completed semi-free dg-modules over the Chevalley-Eilenberg algebra. Finally, Proposition \ref{propo:constructed sym mon dg-equivalence} explicitly constructs the symmetric monoidal dg-equivalence $\CE_\g:\Rep\simeq{}_{\CE_\g}\dgMod^\sf$.

\subsection{Chevalley-Eilenberg algebras}\label{subsec:CDGAs}
A \textbf{CDGA} is a commutative monoid object in the symmetric monoidal category $\Ch$. More explicitly, a CDGA is a triple $A= (A,\mu,\eta)$ consisting of a cochain complex $A=(A^\sharp,\delta)\in\Ch$
and two cochain maps: 
\begin{subequations}
\begin{align}
\mu:A\otimes A\to A\qquad,\qquad a\otimes a'\mapsto a\,a'\,,
\end{align} 
(called \textbf{multiplication}) and
\begin{align}
\eta:\bbK\to A\qquad,\qquad
1\mapsto\eta\,,
\end{align}
\end{subequations}
(called \textbf{unit}) 
which satisfy the usual axioms of associativity,
\begin{subequations}
\begin{align}\label{eq:CDGA associativity}
a(a'a'')=(a\,a')a''\quad,
\end{align}
unitality,
\begin{align}\label{eq:CDGA unitality}
\eta a=a=a\eta\quad,
\end{align}
and commutativity,
\begin{align}\label{eq:CDGA commutativity}
a\,a'=(-1)^{|a||a'|}a'a\quad.
\end{align}
\end{subequations}
The multiplication being a cochain map means that the \textbf{Leibniz law} is satisfied,
\begin{subequations}
\begin{align}\label{eq:Leibniz single}
\delta(a\,a')=\delta(a)a'+(-1)^{|a|}a\delta(a'),
\end{align}
whereas the unit being a cochain map means that it is annihilated by the differential,
\begin{equation}
\delta(\eta)=0\quad.
\end{equation}
\end{subequations}
A CDGA $A$ is said to be \textbf{augmented} if there exists a DGA homomorphism $\varepsilon:A\to\bbK$, i.e. an algebra homomorphism $\varepsilon_{|A^0}:A^0\to\bbK$ which annihilates the 0-cocycles $B^0(A)$.  
\begin{defi}\label{def:semifreeCDGA}
A CDGA $A=(A,\mu,\eta)$ is called \textbf{semi-free}
if its underlying graded commutative algebra $A^\sharp=(A^\sharp,\mu,\eta)$ is free, 
i.e., there exists some graded vector space $U$ such that the following isomorphism holds between graded-commutative algebras,
\begin{align}
A^\sharp\cong\Sym(U)\quad.
\end{align}
A semi-free CDGA $A$ is called \textbf{finitely generated} if the space of generators $U$ is finite-dimensional. 
\end{defi}
\begin{rem}\label{rem:diff determined on sfCDGA}
The Leibniz law \eqref{eq:Leibniz single} implies that the differential $\delta$ of a semi-free CDGA $A^\sharp\cong\Sym(U)$ is completely determined by its restriction $\delta(u)$ to the generators. Hence, one can prescribe such differential by its values $\delta(u)$ on generators such that $|\delta(u)|=|u|+1$ and $\delta^2(u)=0$.
Thus, a generic semi-free CDGA is augmented by the canonical projection which maps generators to 0 if and only if, for every degree $-1$ generator $u$, the differential $\delta(u)$ has no scalar component.
\end{rem}
\begin{rem}\label{rem:need completed semi-free CDGAs}
Given a finite-dimensional homotopy Lie algebra $\g=(\g,\ell)$, we denote the basis of $\g$ as $\{x_\alpha\}_{1\leq\alpha\leq\dim(\g)}$ and the \textbf{structure constants} of $\ell^i:\bigwedge^i\g\to\g[2-i]$ as
\begin{align}
\ell^i(x_{\alpha_1},\ldots,x_{\alpha_i})=\ell^\alpha_{\alpha_1,\ldots,\alpha_i}x_\alpha\quad,
\end{align}
where the Einstein summation convention has support only over $1\leq\alpha\leq\dim(\g)$ such that
\begin{align}\label{eq:compact support of fd L alg maps}
|x_\alpha|=2-i+\sum_{l=1}^i|x_{\alpha_l}|\quad.
\end{align}
Denoting the dual basis as $\{x^\alpha\}_{1\leq\alpha\leq\dim(\g)}$ and the generators of $\Sym(\g^*[-1])$ as
\begin{align}\label{eq:theta^i=sx^i}
\theta^\alpha:=\varsigma x^\alpha:\g\to\bbK\big[1-|x_\alpha|\big]\quad,
\end{align}
we would like to construct the Chevalley-Eilenberg algebra as a finitely generated semi-free CDGA with underlying graded-commutative algebra $\Sym(\g^*[-1])$ and the differential given by dualising the structure maps $\big\{\ell^i:{\bigwedge}^i\g\to\g[2-i]\big\}_{i\geq1}$. The problem is that \eqref{eq:compact support of fd L alg maps} implies
\begin{align}
|\theta^{\alpha_1}\cdots\theta^{\alpha_i}|=|\theta^\alpha|+1
\end{align} 
hence the differential $\delta$ on generators $\theta^\alpha$ will be given, schematically, by
\begin{align}\label{eq:schematic CE differential}
\delta(\theta^\alpha)\,\sim\,\sum_{i=1}^\infty\ell^\alpha_{\alpha_1,\ldots,\alpha_i}\theta^{\alpha_1}\cdots\theta^{\alpha_i}\quad.
\end{align}
\end{rem}
The remedy is the following definition. 
\begin{defi}\label{def:total completed semi-free CDGA}
We say a CDGA $A$ is a \textbf{completed semi-free CDGA} if there exists a graded vector space $U$ and an isomorphism $A^\sharp\cong\widehat{\Sym}(U):=\prod_{i\in\bbN}\Sym^i(U)$ between graded-commutative algebras.
\end{defi}
We recall the generalised Jacobi identity \eqref{eq:generalised Jacobi identity} as
\begin{subequations}
\begin{align}\label{eq:curved Jacobi identity}
\forall k\geq1\,,\qquad\qquad\sum_{\i+j=k}(-1)^{\i j}\ell^i(1_{\i}\otimes\ell^j)\Sigma_{\i,j}=0\quad,
\end{align}
which then imposes the following relation between structure constants: $\forall k\geq1$ and for all $1\leq\alpha_1,\ldots,\alpha_k\leq\dim(\g)$,
\begin{align}\label{eq:indexed Jacobi}
\sum_{\i+j=k}\sum_{\sigma\in\S_{\i,j}}\varepsilon_\sigma(-1)^{j\big(\i+\big|x_{\alpha_{\sigma(1)}}\cdots x_{\alpha_{\sigma(\i)}}\big|\big)}\ell^\alpha_{\alpha_{\sigma(1)},\ldots,\alpha_{\sigma(\i)},\beta}\ell^\beta_{\alpha_{\sigma(i)},\ldots,\alpha_{\sigma(k)}}=0\quad.
\end{align}
\end{subequations}
\begin{constr}
The weight 1 Maurer-Cartan element $\pi_1\in\Pol(\CE_\g,n)^{n+2}$ deforms the zero differential of $\widehat{\Sym}(\g^*[-1])$ as 
\begin{align}
\delta\theta^\alpha=\pb{\pi_1}{\theta^\alpha}=(-1)^{|x_\alpha|+n}\theta^\alpha\bullet\pi_1=\sum_{i=1}^\infty(-1)^{|x_\alpha|+n+(1-|x_\alpha|)\i}\theta^\alpha\pi^i_1=\sum_{i=1}^\infty(-1)^{n+|x_\alpha|i}\theta^\alpha\ell^i\,.
\end{align}
\end{constr}
\begin{propo}\label{propo:CE of fd L infinity algebra}
Given a finite-dimensional graded vector space $\g$ together with a family of graded linear maps $\ell:=\left\{\ell^i:\bigwedge^i\g\to\g[2-i]\right\}_{i\geq1}$, define the finitely generated completed free CGA $\CE_\g:=\widehat{\Sym}(\g^*[-1])$ and the graded linear map $\delta:\CE_\g\to\CE_\g[1]$ on generators $\theta^\alpha$ as
\begin{align}\label{eq:CE differential on generator}
\delta\theta^\alpha:=\sum_{i=1}^\infty\frac{1}{i!}(-1)^{\sum_{l=1}^i|x_{\alpha_l}|(l+|x_{\alpha_{l+1}}\cdots x_{\alpha_i}|)}\ell^\alpha_{\alpha_1,\ldots,\alpha_i}\theta^{\alpha_1}\cdots\theta^{\alpha_i}\quad,
\end{align}
then $\delta$ is a differential structure if and only if $\ell$ satisfies the generalised Jacobi identity \eqref{eq:indexed Jacobi}.
\end{propo}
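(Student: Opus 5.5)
Since $\CE_\g=\widehat{\Sym}(\g^*[-1])$ is completed free as a graded-commutative algebra, the prescription \eqref{eq:CE differential on generator} on the generators $\theta^\alpha$ extends uniquely to a degree $+1$ derivation $\delta$ of $\CE_\g$ via the Leibniz law \eqref{eq:Leibniz single}, as in Remark \ref{rem:diff determined on sfCDGA}. The extension is well defined on infinite sums because $\delta$ maps $\Sym^w$ into $\prod_{w'\geq w}\Sym^{w'}$, so each weight component of $\delta$ applied to a formal series involves only finitely many terms. Before anything else, I will check that $\delta\theta^\alpha$ has degree $|\theta^\alpha|+1$. This follows from \eqref{eq:compact support of fd L alg maps}, as in Remark \ref{rem:need completed semi-free CDGAs}, and confirms that $\delta:\CE_\g\to\CE_\g[1]$.

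The plan then has three steps.

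\textbf{Step 1: reduce to generators.} Since $\delta$ is an odd derivation, $\delta^2=\tfrac12[\delta,\delta]$ is again a derivation, of degree $2$. A derivation of a completed free algebra vanishes if and only if it vanishes on generators. Compatibility with the product topology is the same weight-filtration remark as above. So $\delta^2=0$ if and only if $\delta^2\theta^\alpha=0$ for every $\alpha$.

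\textbf{Step 2: compute $\delta^2\theta^\alpha$.} Applying Leibniz to \eqref{eq:CE differential on generator} gives a sum over $i$ and over positions $l$, each term containing a factor $\delta\theta^{\alpha_l}$. Substituting \eqref{eq:CE differential on generator} again, the weight-$k$ component is
\[\sum_{i+j=k+1}(\text{signs})\,\tfrac{1}{i!\,j!}\,\ell^\alpha_{\alpha_1\ldots\beta\ldots}\ell^\beta_{\gamma_1\ldots\gamma_j}\,\theta\cdots\theta .\]
Graded commutativity lets me move the $j$ new generators to the end. Relabelling the indices then shows that the coefficient of the monomial $\theta^{\alpha_1}\cdots\theta^{\alpha_k}$ is obtained by graded-symmetrising over all placements, i.e. over $\S_{\i,j}$ shuffles. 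The factors $1/i!\,j!$ together with the $i$ choices of the position $l$ combine, against the $1/k!$ normalisation of the monomial, into exactly the sum over $(\i,j)$-shuffles. The resulting expression is the left-hand side of \eqref{eq:indexed Jacobi}, up to an overall sign depending only on $\alpha$ and $k$.

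\textbf{Step 3: conclude both directions.} Because $\g$ is finite-dimensional, the monomials $\theta^{\alpha_1}\cdots\theta^{\alpha_k}$ for sorted index tuples form a basis of $\Sym^k(\g^*[-1])$. Graded (anti)symmetry of $\ell^\alpha_{\alpha_1\ldots}$ in the lower indices matches the symmetry of these monomials. Hence the weight-$k$ component of $\delta^2\theta^\alpha$ vanishes if and only if the symmetrised coefficient \eqref{eq:indexed Jacobi} vanishes for all index tuples. Taking all $k\geq1$ gives both directions of the equivalence.

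The main obstacle is the sign bookkeeping in Step 2. It requires tracking the sign in \eqref{eq:CE differential on generator}, the Koszul signs from passing $\delta$ (of degree $1$) past the $\theta^{\alpha_{<l}}$, and those from reordering the generators, and then showing that they combine into $\varepsilon_\sigma(-1)^{j(\i+|x_{\alpha_{\sigma(1)}}\cdots x_{\alpha_{\sigma(\i)}}|)}$.

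To avoid brute-force index manipulation, I would first try the coordinate-free route suggested by the Construction preceding the statement. There, $\delta\theta^\alpha=\pb{\pi_1}{\theta^\alpha}=\pm\theta^\alpha\bullet\pi_1$, and the sign in \eqref{eq:CE differential on generator} is precisely what evaluating $\theta^\alpha\circ\ell^i$ on a basis of $\bigwedge^i\g$ produces. With this identification, $\delta^2\theta^\alpha=\pm\tfrac12\,\theta^\alpha\bullet\pb{\pi_1}{\pi_1}$, using the bullet-associativity/Jacobi property of the Schouten bracket \eqref{eq:def of Schouten by bullet}. Nondegeneracy of the pairing in finite dimensions then gives $\delta^2=0\iff\pb{\pi_1}{\pi_1}=0$, which is \eqref{eq:generalised Jacobi identity}, equivalently \eqref{eq:indexed Jacobi}. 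Only if this identification fails would I fall back on checking the signs directly in low arity and inducting.
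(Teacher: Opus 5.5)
Your Steps 1--3 are the paper's proof. The paper also applies the Leibniz rule to $\delta\theta^\alpha$, moves each $\delta\theta^{\alpha_m}$ to the end and reindexes cyclically so that the $i$ positions contribute equally (turning $\tfrac{1}{i!}$ into $\tfrac{1}{(i-1)!}$), substitutes $\delta\theta^\beta$, pulls the shifts $\varsigma$ to the right and reads off \eqref{eq:indexed Jacobi}. Your Step 1 makes explicit what the paper uses silently: $\delta^2=\tfrac12[\delta,\delta]$ is a weight-non-decreasing derivation, so checking generators suffices. One small correction: the leftover overall sign depends on the degrees of the $x_{\alpha_l}$ through the reversal degree, not only on $\alpha$ and $k$. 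It is constant on each monomial, though, which is all you need.

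The coordinate-free route you would try first is genuinely different from the paper's argument. It replaces the index bookkeeping by a structural identity, valid for weight-zero $P$:
\[
(P\bullet\pi_1)\bullet\pi_1=P\bullet(\pi_1\bullet\pi_1)=\tfrac12\,P\bullet\pb{\pi_1}{\pi_1},
\]
followed by nondegeneracy of the dual basis $\{\theta^\alpha\}$. It therefore explains \emph{why} the equivalence holds rather than merely verifying it.

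Its cost is that it rests on facts this paper does not establish:
\begin{enumerate}
\item[(i)] The Construction identifies $\delta$ with $\pb{\pi_1}{\cdot}$ only on generators. To evaluate $\delta(\delta\theta^\alpha)$ as a bullet product, you must know that $P\mapsto\pm P\bullet\pi_1$ is a derivation of the product of $\CE_\g$, viewed as arity-graded maps $\bigwedge^p\g\to\bbK$. This holds, e.g.\ by dualising the coderivation extending $\pi_1$, and it ensures the map agrees with the Leibniz extension.
\item[(ii)] You need the pre-Lie/Jacobi property of $\bullet$ with exactly these signs, which is imported from the bullet-product formalism rather than proved here.
\item[(iii)] You must still check once that $\theta^\alpha\bullet\pi_1^i$ reproduces precisely the sign in \eqref{eq:CE differential on generator}.
\end{enumerate}
These checks are lighter than the full computation of $\delta^2\theta^\alpha$, so this is a reasonable way to organise the proof. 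However, only the direct computation (your fallback, and the paper's argument) is self-contained within the paper.
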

\begin{proof}
First we show well-definedness, the \textbf{reversal degree}
\begin{align}
|x_{\alpha_1}\cdots x_{\alpha_i}|_\rev:=\sum_{l=1}^i|x_{\alpha_l}||x_{\alpha_{l+1}}\cdots x_{\alpha_i}|
\end{align}
is symmetric $|x_{\alpha_1}\cdots x_{\alpha_l}x_{\alpha_{l+1}}\cdots x_{\alpha_i}|_\rev=|x_{\alpha_1}\cdots x_{\alpha_{l+1}}x_{\alpha_l}\cdots x_{\alpha_i}|_\rev$ and
\begin{align}
\ell^\alpha_{\alpha_1,\ldots,\alpha_i}\theta^{\alpha_1}\cdots\theta^{\alpha_i}=\textcolor{blue}{-(-1)^{|x_{\alpha_l}||x_{\alpha_{l+1}}|}}\ell^\alpha_{\alpha_1,\ldots,\alpha_{l+1},\alpha_l,\ldots,\alpha_i}\textcolor{blue}{(-1)^{|\theta^{\alpha_l}||\theta^{\alpha_{l+1}}|}}\theta^{\alpha_1}\cdots\theta^{\alpha_{l+1}}\theta^{\alpha_l}\cdots\theta^{\alpha_i}
\end{align}
but these \textcolor{blue}{signs} clearly give $(-1)^{|x_{\alpha_l}|+|x_{\alpha_{l+1}}|}$ hence \eqref{eq:CE differential on generator} is indeed an element of $\CE^\sharp(\g)$. 

Now let us show that the square-zero condition is upheld by expressing\footnote{Note that, in the notation below, the check $\check{\theta}^{\alpha_m}$ denotes omission of that term.} $\delta^2(\theta^\alpha)$, 
\begin{align}\label{eq:delta^2 theta^i}
&\sum_{i=1}^\infty\frac{1}{i!}(-1)^{|x_{\alpha_1}\cdots x_{\alpha_i}|_\rev+\sum_{l=1}^i|x_{\alpha_l}|l}\ell^\alpha_{\alpha_1,\ldots,\alpha_i}\delta(\theta^{\alpha_1}\cdots\theta^{\alpha_i})\\
&=\sum_{i=1}^\infty\sum_{m=1}^i\frac{(-1)^{|x_{\alpha_1}\cdot\cdot x_{\alpha_i}|_\rev+\sum_{l=1}^i|x_{\alpha_l}|l+|\theta^{\alpha_1}\cdot\cdot\theta^{\alpha_{m-1}}|+|\delta\theta^{\alpha_m}||\theta^{\alpha_{m+1}}\cdot\cdot\theta^{\alpha_i}|}}{i!}\ell^\alpha_{\alpha_1,..,\alpha_i}\theta^{\alpha_1}\cdot\cdot\check{\theta}^{\alpha_m}\cdot\cdot\theta^{\alpha_i}\delta(\theta^{\alpha_m})\nn\\
&=\sum_{i=1}^\infty\sum_{m=1}^i\frac{(-1)^{|x_{\alpha_1}\cdot\cdot x_{\alpha_i}|_\rev+\textcolor{violet}{\sum_{l=1}^i|x_{\alpha_l}|l+|\theta^{\alpha_1}\cdot\cdot\theta^{\alpha_{\m}}|+\widetilde{|x_{\alpha_m}|}(i-m)}}}{i!}\ell^\alpha_{\alpha_1,..,\check{\alpha}_m,..,\alpha_i,\alpha_m}\theta^{\alpha_1}\cdot\cdot\check{\theta}^{\alpha_m}\cdot\cdot\theta^{\alpha_i}\delta(\theta^{\alpha_m})\nn.
\end{align}
For $m=i$, the \textcolor{violet}{sign} is given by
\begin{align}\label{eq:m=i sign in CE alg proof}
(-1)^{\sum_{l=1}^i|x_{\alpha_l}|l+|\theta^{\alpha_1}\cdot\cdot\theta^{\alpha_{\i}}|}\quad.
\end{align}
For $1\leq m\leq i-1$, we recover \eqref{eq:m=i sign in CE alg proof} by reindexing cyclically as:
\begin{align}
\alpha_m\mapsto\alpha_i\quad,\quad\alpha_{m+1}\mapsto\alpha_m\quad,\quad\alpha_{m+2}\mapsto\alpha_{m+1}\quad,\quad\ldots\quad,\quad\alpha_i\mapsto\alpha_{\i}\quad,
\end{align}
hence 
\begin{align}
\delta^2(\theta^\alpha)=&\sum_{i=1}^\infty\frac{1}{(i-1)!}(-1)^{|x_{\alpha_1}\cdots x_{\alpha_i}|_\rev+\sum_{l=1}^i|x_{\alpha_l}|l+|\theta^{\alpha_1}\cdots\theta^{\alpha_{\i}}|}\ell^\alpha_{\alpha_1,\ldots,\alpha_i}\theta^{\alpha_1}\cdots\theta^{\alpha_{\i}}\delta(\theta^{\alpha_i})\nn\\
=&\sum_{i=1}^\infty\frac{(-1)^{|x_{\alpha_1}\cdots x_{\alpha_{\i}}|_\rev+|x_\beta|(i+|x_{\alpha_1}\cdots x_{\alpha_{\i}}|)+\sum_{l=1}^{\i}|x_{\alpha_l}|\l+\i}}{(i-1)!}\ell^\alpha_{\alpha_1,\ldots,\alpha_{\i},\beta}\theta^{\alpha_1}\cdots\theta^{\alpha_{\i}}\delta(\theta^\beta)\quad.\label{eq:pre-reindex delta^2 theta^i}
\end{align}
We write
\begin{align}
\delta(\theta^\beta):=\sum_{j=1}^\infty\frac{1}{j!}(-1)^{|x_{\alpha_i}\cdots x_{\alpha_{\i+j}}|_\rev+\sum_{l=i}^{\i+j}|x_{\alpha_l}|(\l+i)}\ell^\beta_{\alpha_i,\ldots,\alpha_{\i+j}}\theta^{\alpha_i}\cdots\theta^{\alpha_{\i+j}}
\end{align}
so that \eqref{eq:pre-reindex delta^2 theta^i} is given by
\begin{align}\label{eq:CE proof-latter equality}
\sum_{\begin{smallmatrix}i=1\\j=1
\end{smallmatrix}}^\infty\frac{(-1)^{\i+j+|x_{\alpha_1}\cdots x_{\alpha_{\i+j}}|_\rev+j(\i+|x_{\alpha_1}\cdots x_{\alpha_{\i}}|)+\sum_{l=1}^{\i+j}|x_{\alpha_l}|\l}}{(i-1)!j!}\ell^\alpha_{\alpha_1,\ldots,\alpha_{\i},\beta}\ell^\beta_{\alpha_i,\ldots,\alpha_{\i+j}}\theta^{\alpha_1}\cdots\theta^{\alpha_{\i+j}}\quad.
\end{align}
Recalling that $\theta^\alpha:=\varsigma x^\alpha$ and pulling all the shift factors $\varsigma$ to the right, \eqref{eq:CE proof-latter equality} becomes
\begin{align}\label{eq:shift factors to the right}
\sum_{\begin{smallmatrix}i=1\\j=1
\end{smallmatrix}}^\infty\frac{(-1)^{\i+j+|x_{\alpha_1}\cdots x_{\alpha_{\i+j}}|_\rev+|x_{\alpha_1}\cdots x_{\alpha_{\i+j}}|+j(\i+|x_{\alpha_1}\cdots x_{\alpha_{\i}}|)}}{(i-1)!j!}\ell^\alpha_{\alpha_1,\ldots,\alpha_{\i},\beta}\ell^\beta_{\alpha_i,\ldots,\alpha_{\i+j}}x^{\alpha_1}\cdots x^{\alpha_{\i+j}}\varsigma^{\i+j}
\end{align}
which vanishes if and only if \eqref{eq:indexed Jacobi} is satisfied.
\end{proof}

\subsection{The symmetric monoidal dg-equivalence}\label{subsec:equivalence}
Associated with any CDGA $A$ is its dg-category ${}_A\dgMod$ of \textbf{$A$-dg-modules}.
More explicitly, an object $M=(M,\triangleright)$ in this category is a cochain complex $M=(M^\sharp,d)\in\Ch$ endowed with a cochain map
\begin{align}\label{eq:left action}
\triangleright:A\otimes M\to M\qquad,\qquad a\otimes m\mapsto a\triangleright m=:am\quad,
\end{align}
(called \textbf{action}) which satisfies the 
usual module axioms of associativity,
\begin{subequations}
\begin{align}\label{eq:module assoc}
(a\,a')m=a(a'm)\quad,
\end{align}
and unitality,
\begin{align}\label{eq:module unital}
\eta m=m\quad.    
\end{align}
\end{subequations}
The left action \eqref{eq:left action} being a cochain map means that we have 
\begin{align}\label{eq:act being cochain}
d(am)=(\delta a)m+(-1)^{|a|}a(d m)\quad.
\end{align}
The dg-module
of morphisms between two objects $M,N\in {}_A\dgMod$ is the subcomplex
\begin{subequations}\label{eqn:hom_A}
\begin{align}
{}_A\dgMod(M,N)\,\subseteq \, \Ch(M,N)
\end{align}
of the internal hom in $\Ch$ containing the \textbf{$A$-linear} maps $f\in\Ch(M,N)$, i.e.
\begin{align}\label{eq:def of A-linearity}
f(am)=(-1)^{|f||a|}af(m)
\end{align}
with the action given by
\begin{align}\label{eqn:inner hom action}
(af)(m):=af(m)\quad.
\end{align}
\end{subequations}
The dg-category ${}_A\dgMod$ has a compatible symmetric monoidal structure
with: 
\begin{enumerate}
\item[(i)] Monoidal product given by the relative tensor product,
\begin{align}\label{eqn:otimesA}
M\otimes_AN:=\colim\bigg(
\xymatrix@C=8em{
M\otimes N ~&~ \ar@<1ex>[l]^-{\id_M\,\otimes\, \triangleright}\ar@<-1ex>[l]_-{(\triangleright\,\otimes\, \id_N)\,(s_{M,A}^{}\,\otimes\,\id_N)}M\otimes A\otimes N
}
\bigg)\in\,{}_A\dgMod\,\,.
\end{align}
\item[(ii)] Monoidal unit given by the base CDGA, $A=(A,\mu)\in {}_A\dgMod$.
\item[(iii)] Symmetric braiding given by the Koszul sign rule \eqref{eq:Koszul sign},
\begin{align}\label{eqn:gammaA}
s_{M,N}:M\otimes_AN\to N\otimes_AM\qquad,\qquad m\otimes_An\mapsto(-1)^{|m||n|}n\otimes_Am\quad.
\end{align}
\end{enumerate}

\begin{defi}\label{def:semifreedgMod}
An $A$-dg-module $M=(M,\triangleright)$ is \textbf{semi-free}
if its underlying $A^\sharp$-graded-module $M^\sharp = (M^\sharp,\triangleright )$ is free, i.e. there exists a graded vector space $V$ such that the following isomorphism holds between $A^\sharp$-graded-modules,
\begin{subequations}
\begin{align}
M^\sharp\cong A^\sharp\otimes V
\end{align}
where the left action is simply given by
\begin{align}
a(a'\otimes v)=(a\,a')\otimes v=:a\,a'v\quad.
\end{align}
\end{subequations}
\end{defi}
\begin{rem}\label{rem:sf dgmod diff determined}
Analogous to Remark \ref{rem:diff determined on sfCDGA}, the fact that the action is cochain \eqref{eq:act being cochain} means that the differential $d$ on a semi-free $A$-dg-module $M^\sharp\cong A^\sharp\otimes V$ is determined by its restriction $d(v)$ to the generators. Hence, one can prescribe such differential by its values $d(v)$ on generators such that $|d(v)|=|v|+1$ and $d^2(v)=0$.
\end{rem}

\begin{rem}\label{rem:semifreedgMod}
The monoidal structure on ${}_A\dgMod$
restricts to one on the full subcategory ${}_A\dgMod^\sf\subseteq{}_A\dgMod$.
Indeed, the monoidal unit $A\in{}_A\dgMod$ is semi-free (for the simple reason that $A^\sharp\cong A^\sharp\otimes\bbK$) and,
given any two semi-free $A$-dg-modules
$M,N\in {}_A\dgMod^\sf$ with 
$M^\sharp\cong A^\sharp\otimes V$ and $N^\sharp\cong A^\sharp\otimes W$, 
we have isomorphisms between graded $A^\sharp$-modules,
\begin{align}
\big(M\otimes_A N\big)^\sharp\,\cong\,A^\sharp\otimes\big(V\otimes W\big)
\end{align}
which implies that $M\otimes_A N\in {}_A\dgMod^\sf$ is semi-free.
\end{rem}

\begin{rem}\label{rem:curved representation -> no mon prod}
Given a finite-dimensional homotopy Lie algebra $\g$, consider the Chevalley-Eilenberg algebra $\CE_\g$ of Proposition \ref{propo:CE of fd L infinity algebra}. Given a representation $\left\{\rho_V^i:\g^{\otimes\i}\otimes V\to V[2-i]\right\}_{i\geq1}$ as in \eqref{subeq:Allocca rep}, we set
\begin{equation}
\rho^i_V(x_{\alpha_1},\ldots,x_{\alpha_{\i}},v):=\rho^i_{\alpha_1,\ldots,\alpha_{\i}}(v)\quad.
\end{equation}
We would like to construct the Chevalley-Eilenberg dg-module as a semi-free $\CE_\g$-dg-module with the following underlying $\CE^\sharp_\g$-graded-module,
\begin{align}
\left(\prod_{i\in\bbN}\Sym^i(\g^*[-1])\right)\otimes V\quad.
\end{align}
Analogous to Remark \ref{rem:need completed semi-free CDGAs}, the problem is that the differential $d$ on generators $v\in V$ will be given, schematically, by 
\begin{equation}
dv\sim\sum_{i=1}^\infty\theta^{\alpha_1}\cdots\theta^{\alpha_{\i}}\rho^i_{\alpha_1,\ldots,\alpha_{\i}}(v)\quad.
\end{equation}
\end{rem}
\begin{defi}\label{def:completed semi-free dg-module}
Given a completed semi-free CDGA $A$ with generators $U$, a \textbf{completed semi-free $A$-dg module} $M$ is one for which there exists a graded vector space $V$ such that the following isomorphism holds between $A^\sharp$-graded-modules,
\begin{align}
M^\sharp\cong\prod_{i\in\bbN}\left(\Sym^i(U)\otimes V\right)\quad.
\end{align}
\end{defi}
We recall $\pi_1^k=(-1)^{\k}\ell^k$ and \eqref{eq:rewritten weak action property} hence
\begin{align}\label{eq:curved action index}
\sum_{\i+j=k}(-1)^{i\j}\rho^i_V\big[(1_{\i-1}\otimes\ell^j\otimes1_V)(\Sigma_{\i-1,j}\otimes1_V)+(1_{\j}\otimes\rho^j_V)(\Sigma_{\i,\j}\otimes1_V)\big]=0\quad.
\end{align}
\begin{propo}\label{propo:CE of rep}
Given a finite-dimensional homotopy Lie algebra $\g=(\g,\ell)$ and a representation $V=(V,\rho_V)$, the differential $d$ of the Chevalley-Eilenberg dg-module is given on generators $v\in V$ of $(\CE_\g^V)^\sharp$ by
\begin{align}\label{eq:d_V tens}
dv:=\sum_{i=1}^\infty\frac{1}{\i!}(-1)^{|x_{\alpha_1}\cdots x_{\alpha_{\i}}|_\rev+\sum_{l=1}^{\i}|x_{\alpha_l}|\l}\theta^{\alpha_1}\cdots\theta^{\alpha_{\i}}\rho^i_{\alpha_1,\ldots,\alpha_{\i}}(v)\quad.
\end{align}
\end{propo}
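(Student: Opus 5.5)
The statement is really a claim that the formula \eqref{eq:d_V tens}, extended to all of $(\CE_\g^V)^\sharp\cong\prod_{i\in\bbN}\big(\Sym^i(\g^*[-1])\otimes V\big)$ by the Leibniz rule \eqref{eq:act being cochain}, squares to zero. Equivalently, it makes $\CE_\g^V$ a completed semi-free $\CE_\g$-dg-module in the sense of Definition \ref{def:completed semi-free dg-module}, and this holds precisely because $V$ is a representation. So I will mimic the proof of Proposition \ref{propo:CE of fd L infinity algebra} step by step, with the dg-module Leibniz rule replacing the algebra Leibniz rule.

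\textbf{Step 1: well-definedness.} I first check that the right-hand side of \eqref{eq:d_V tens} is a legitimate element of the completed module.
\begin{itemize}
\item The reversal degree is symmetric under adjacent transpositions.
\item The full skew-symmetry of $\rho^i_V$ (Definition \ref{def:squiggly notation}) yields the sign $-(-1)^{|x_{\alpha_l}||x_{\alpha_{l+1}}|}$ on swapping two indices.
\item The graded commutativity of the $\theta$'s yields the sign $(-1)^{|\theta^{\alpha_l}||\theta^{\alpha_{l+1}}|}$.
\end{itemize}
These signs combine to exactly cancel the change in $\sum_l|x_{\alpha_l}|\l$, as in the displayed computation for $\delta\theta^\alpha$. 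Hence the arity-$\i$ summand is a symmetric expression and lies in $\Sym^{\i}(\g^*[-1])\otimes V$. The degree bookkeeping $|dv|=|v|+1$ then follows from $|\rho^i_V|=2-i$ together with $|\theta^\alpha|=1-|x_\alpha|$.

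\textbf{Step 2: computing $d^2 v$.} I extend $d$ to all of $(\CE_\g^V)^\sharp$ via $d(a v)=(\delta a)v+(-1)^{|a|}a\,dv$, where $\delta$ is the differential of Proposition \ref{propo:CE of fd L infinity algebra}. This extension is well defined termwise on the product, since each $\Sym$-weight receives only finitely many contributions. Applying $d$ to \eqref{eq:d_V tens} produces two families of terms.
\begin{itemize}
\item \textbf{$\delta$ hitting a $\theta$:} the terms $\theta^{\alpha_1}\cdots\delta(\theta^{\alpha_m})\cdots\theta^{\alpha_{\i}}\rho^i(v)$. As in that proof, I move $\delta\theta^{\alpha_m}$ to the last slot and reindex cyclically, which turns $\frac{1}{\i!}$ into $\frac{1}{(\i-1)!}$. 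I then substitute \eqref{eq:CE differential on generator}, producing $\ell^\beta_{\ldots}\rho^i_{\ldots,\beta,\ldots}$.
\item \textbf{$d$ hitting the generator:} the terms $\theta\cdots\theta\, d(\rho^i_{\alpha_1,\ldots}(v))$. Here I substitute \eqref{eq:d_V tens} again, producing $\rho^j\rho^i$-products.
\end{itemize}

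\textbf{Step 3: recognising the representation identity.} I pull all shift factors $\varsigma$ to the right, as in \eqref{eq:shift factors to the right}, and symmetrise the coefficients of $\theta^{\alpha_1}\cdots\theta^{\alpha_{k}}$. The result should be exactly the indexed form of \eqref{eq:curved action index}: the first family gives the $\rho^i(1\otimes\ell^j\otimes1_V)\Sigma$ terms, and the second gives the $\rho^i(1\otimes\rho^j)\Sigma$ terms. The factorials convert into sums over shuffles, since $\frac{k!}{(\i-1)!\,j!}$ counts the $(\i-1,j)$-shuffles up to the overall $1/k!$, and similarly for the other family. Thus $d^2v=0$ iff $V$ is a representation. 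Finally, $d^2(av)=(\delta^2a)v+a\,d^2v$, which vanishes by Proposition \ref{propo:CE of fd L infinity algebra}, so $d$ is a differential on the whole module.

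\textbf{Main obstacle.} I expect the main difficulty to be the sign bookkeeping in Step 3. Three sources of sign must be matched against the sign $(-1)^{i\j}$ and the Koszul signs in $\varepsilon_\sigma$ of \eqref{eq:curved action index}:
\begin{itemize}
\item the reversal-degree signs;
\item the Leibniz sign $(-1)^{|\theta^{\alpha_1}\cdots\theta^{\alpha_{m-1}}|}$;
\item the sign from commuting $\varsigma$ past the $x^{\alpha}$'s.
\end{itemize}
I would handle this first in the reduced case where $\g$ is concentrated in degree $1$, so that all $\theta$'s are even, to fix the combinatorics. I would then restore the general Koszul signs by the same cyclic-reindexing trick already used for \eqref{eq:m=i sign in CE alg proof}.
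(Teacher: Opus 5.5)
Your proposal follows the paper's proof essentially step for step: you expand $d^2v$ by the Leibniz rule into the $\delta\theta$-family and the $d\big(\rho^i_{\alpha_1,\ldots,\alpha_{\i}}(v)\big)$-family, reuse the cyclic reindexing from the proof of Proposition \ref{propo:CE of fd L infinity algebra}, pull the shift factors $\varsigma$ to the right, and match the result against \eqref{eq:curved action index}; your well-definedness check and the identity $d^2(av)=(\delta^2a)v+a\,d^2v$ are harmless additions. One caution on your sign strategy: if all $\theta^\alpha$ are even (e.g.\ $\g$ concentrated in degree $1$), then $\CE_\g$ sits in even degrees, which forces $\delta=0$ and hence $\ell\equiv0$, so that warm-up case cannot calibrate the signs of the $\rho^i(1_{\i-1}\otimes\ell^j\otimes1_V)$ terms at all.
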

\begin{proof}
We express the square of the differential $d$ on a generator $v\in V$,
\begin{align}
d^2(v)=\,&\sum_{i=2}^\infty\frac{1}{\i!}(-1)^{|x_{\alpha_1}\cdots x_{\alpha_{\i-1}}|_\rev+|x_\alpha|(i+|x_{\alpha_1}\cdots x_{\alpha_{\i-1}}|)+\sum_{l=1}^{\i-1}|x_{\alpha_l}|\l}\delta(\theta^{\alpha_1}\cdots\theta^{\alpha_{\i-1}}\theta^\alpha)\rho^i_{\alpha_1,\ldots,\alpha_{\i-1},\alpha}(v)\nn\\&+\sum_{j=1}^\infty\frac{1}{\j!}(-1)^{|x_{\alpha_1}\cdots x_{\alpha_{\j}}|_\rev+\sum_{l=1}^{\j}|x_{\alpha_l}|l+\j}\theta^{\alpha_1}\cdots\theta^{\alpha_{\j}}d\big(\rho^j_{\alpha_1,\ldots,\alpha_{\j}}(v)\big)\quad.\label{eq:dd^2(v) first expression}
\end{align}
The first term of \eqref{eq:dd^2(v) first expression} can be handled by using the steps \eqref{eq:delta^2 theta^i}-\eqref{eq:CE proof-latter equality} thus \eqref{eq:dd^2(v) first expression} is given by
\begin{align}
&\sum_{\begin{smallmatrix}i=2\\j=1
\end{smallmatrix}}^\infty\frac{1}{(\i-1)!j!}(-1)^{|x_{\alpha_1}\cdots x_{\alpha_{\i+\j}}|_\rev+j|x_{\alpha_1}\cdots x_{\alpha_{\i-1}}|+i\j+\sum_{l=1}^{\i+\j}|x_{\alpha_l}|l}\theta^{\alpha_1}\cdots\theta^{\alpha_{\i+\j}}\rho^i_{\alpha_1,\ldots,\alpha_{\i-1},\beta}(v)\ell^\beta_{\alpha_{\i},\ldots,\alpha_{\i+\j}}\nn\\&+\sum_{\begin{smallmatrix}i=1\\j=1
\end{smallmatrix}}^\infty\frac{(-1)^{|x_{\alpha_1}\cdot\cdot x_{\alpha_{\i+\j}}|_\rev+i(\j+|x_{\alpha_1}\cdot\cdot x_{\alpha_{\j}}|)+\sum_{l=1}^{\i+\j}|x_{\alpha_l}|\l}}{\i!\j!}\theta^{\alpha_j}\cdot\cdot\theta^{\alpha_{\i+\j}}\theta^{\alpha_1}\cdot\cdot\theta^{\alpha_{\j}}\rho^i_{\alpha_j..\alpha_{\i+\j}}\big[\rho^j_{\alpha_1..\alpha_{\j}}(v)\big]\label{eq:dd^2(v) second expression}
\end{align}
but we simply reindex the second sum then pull all the shift factors to the right as in \eqref{eq:shift factors to the right}, 
\begin{align}
&\sum_{\begin{smallmatrix}i=2\\j=1
\end{smallmatrix}}^\infty\frac{(-1)^{|x_{\alpha_1}\cdots x_{\alpha_{\i+\j}}|_\rev+j|x_{\alpha_1}\cdots x_{\alpha_{\i-1}}|+i\j}}{(\i-1)!j!}x^{\alpha_1}\cdots x^{\alpha_{\i+\j}}\varsigma^{\i+\j}\rho^i_{\alpha_1,\ldots,\alpha_{\i-1},\beta}(v)\ell^\beta_{\alpha_{\i},\ldots,\alpha_{\i+\j}}\nn\\&+\sum_{i,j=1}^\infty\frac{(-1)^{|x_{\alpha_1}\cdot\cdot x_{\alpha_{\i+\j}}|_\rev+j|x_{\alpha_1}\cdot\cdot x_{\alpha_{\i}}|+i\j}}{\i!\j!}x^{\alpha_1}\cdots x^{\alpha_{\i+\j}}\varsigma^{\i+\j}\rho^i_{\alpha_1,\ldots,\alpha_{\i}}\big(\rho^j_{\alpha_i,\ldots,\alpha_{\i+\j}}(v)\big)\,.\label{eq:dd^2(v) last expression}
\end{align}
\eqref{eq:dd^2(v) last expression} vanishes if and only if \eqref{eq:curved action index} is satisfied.
\end{proof}
\begin{propo}\label{propo:constructed sym mon dg-equivalence}
Given $f=\left\{f^i:\g^{\otimes\i}\otimes U\to V\big[|f|-\i\big]\right\}_{i\geq1}:U\s V\big[|f|\big]$, we define the $\CE_\g$-linear map
\begin{subequations}
\begin{equation}
\CE_\g^f:\CE_\g^U\to\CE_\g^V\big[|f|\big]
\end{equation}
on module generators $u\in U$ as 
\begin{equation}\label{eq:CE(frak g,f)(u):=...}
\CE_\g^f(u):=\sum_{i=1}^\infty\frac{1}{\i!}(-1)^{|x_{\alpha_1}\cdots x_{\alpha_{\i}}|_\rev+\sum_{l=1}^{\i}|x_{\alpha_l}|(l+|f|)}\theta^{\alpha_1}\cdots\theta^{\alpha_{\i}}f^i_{\alpha_1,\ldots,\alpha_{\i}}(u)
\end{equation}
This defines a symmetric strict monoidal dg-equivalence $\CE_\g:\Rep\simeq{}_{\CE_\g}\dgMod^\sf_\Pi$.
\end{subequations}
\end{propo}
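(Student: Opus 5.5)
The plan is to verify, in order, the conditions of Definition \ref{def:sym mon dg-equivalence} for the assignment $U\mapsto\CE_\g^U$, $f\mapsto\CE_\g^f$, with Proposition \ref{propo:CE of rep} supplying the object map. The target ${}_{\CE_\g}\dgMod^\sf_\Pi$ is read as the full subcategory of completed semi-free $\CE_\g$-dg-modules as in Definition \ref{def:completed semi-free dg-module}.

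\textbf{Step 1: the correspondence on morphisms is a graded linear bijection.} A $\CE_\g$-linear map $\CE_\g^U\to\CE_\g^V[n]$ of the underlying completed graded modules is determined by its values on generators $u\in U$. Expanding such a value in the basis $\theta^{\alpha_1}\cdots\theta^{\alpha_{\i}}$ and using finite-dimensionality of $\g$ gives a family of components $f^i_{\alpha_1,\ldots,\alpha_{\i}}(u)$. These are graded-symmetric in the $\theta$'s, which is exactly the full skew-symmetry of Definition \ref{def:squiggly notation} after the shift $\varsigma$; the same sign bookkeeping as in the well-definedness part of Proposition \ref{propo:CE of fd L infinity algebra} applies. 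So \eqref{eq:CE(frak g,f)(u):=...} is well-defined and bijective, with the normalising sign and $1/\i!$ chosen precisely to make this an isomorphism of graded vector spaces. This gives full faithfulness at the level of graded hom-spaces.

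\textbf{Step 2: functoriality and compatibility with the differentials.} I will show $\CE_\g^{gf}=\CE_\g^g\CE_\g^f$ and $\CE_\g^{\oone_U}=\id$. The second is immediate. For the first, evaluate on a generator $u$. Applying $\CE_\g^g$ to $\CE_\g^f(u)$ uses $\CE_\g$-linearity, which pulls the monomial $\theta^{\alpha_1}\cdots\theta^{\alpha_{\k}}$ out with Koszul sign $(-1)^{|g||\theta\cdots\theta|}$ and then expands $g$ on the generator $f^k(u)$. This yields a double sum over $j,k$. Symmetrising the product of two monomials reproduces the shuffles $\Sigma_{\j,\k}$ in \eqref{subeq:juxtaposition of intertwiners}, and the factor $\binom{\i}{\j}/\i!=1/(\j!\,\k!)$ is what matches the normalisations.

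Next I will show $\CE_\g^{\pb{\rho}{f}}=\partial\,\CE_\g^f:=d\,\CE_\g^f-(-1)^{|f|}\CE_\g^f\,d$ by the same method. Evaluate $d\,\CE_\g^f(u)$ with the Leibniz rule: $\delta$ hitting the $\theta$'s gives the $\varrho_f\ell_U$ term, and $d$ hitting the generators gives the $\rho_Vf$ term via \eqref{eq:d_V tens}. Evaluating $\CE_\g^f(du)$ gives the $f\rho_U$ term. For the signs I will reuse the reindexing and the ``pull $\varsigma$ to the right'' manipulations from the proofs of Propositions \ref{propo:CE of fd L infinity algebra} and \ref{propo:CE of rep}, matched against \eqref{eq:arity component of [rho,f]}. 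Together with Step 1, this shows $\CE_\g$ is a fully faithful dg-functor whose hom maps are isomorphisms of complexes.

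\textbf{Step 3: monoidal structure and essential surjectivity.} On underlying modules, $\CE_\g^U\otimes_{\CE_\g}\CE_\g^V\cong\CE_\g^{U\otimes V}$ as in Remark \ref{rem:semifreedgMod}, working in the completed sense. The induced differential on a generator $u\otimes v$ is $du\otimes v+(-1)^{|u|}u\otimes dv$, which corresponds under \eqref{eq:d_V tens} to $\rho_{U,V}$ of \eqref{eq:rho_UV}. Next, $\CE_\g^{f\odot g}=\CE_\g^f\otimes\CE_\g^g$ follows by the same shuffle-counting as in Step 2, compared with \eqref{eq:odot of squiggly maps}. The functor clearly sends $\bbK\mapsto\CE_\g$ and $\gamma_{U,V}\mapsto s$.

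For essential surjectivity, take any completed semi-free module $M^\sharp\cong\prod_i\Sym^i(\g^*[-1])\otimes V$. Read off components $\rho^i_V$ from $d$ on generators using Step 1 with $n=1$. Then Proposition \ref{propo:CE of rep}, read as an if-and-only-if, shows that $d^2=0$ forces $(V,\rho_V)$ to be a representation, so $M=\CE_\g^V$ on the nose.

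\textbf{Main obstacle.} I expect the main difficulty to be the sign verification in Step 2, namely the identity $\CE_\g^{\pb{\rho}{f}}=\partial\CE_\g^f$. It mixes three sign sources: the reversal degree, the extra $|f|$-dependent factor $(-1)^{\sum|x_{\alpha_l}||f|}$, and the Koszul signs from moving $\CE_\g^f$ past the $\theta$'s. My first attempt will be to reduce everything to the shifted basis $x^{\alpha}\varsigma$ as in \eqref{eq:shift factors to the right}, where the claim should become a direct comparison with \eqref{eq:arity component of [rho,f]}.
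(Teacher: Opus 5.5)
Your proposal is correct and follows essentially the same route as the paper. The paper also verifies by explicit sign and shuffle bookkeeping that $\CE_\g^{gf}=\CE_\g^g\CE_\g^f$, that $\CE_\g^{\pb{\rho}{f}}=d\,\CE_\g^f-(-1)^{|f|}\CE_\g^f\,d$ (via the Leibniz rule and the reindexing of Propositions \ref{propo:CE of fd L infinity algebra} and \ref{propo:CE of rep}), that $\CE_\g^{f\odot g}$ matches $\CE_\g^f\otimes\CE_\g^g$, and that $\bbK$ and $\gamma$ are preserved, with essential surjectivity taken from the if-and-only-if in Proposition \ref{propo:CE of rep}; your Step 1 simply makes explicit the bijectivity on hom-spaces that the paper dismisses as ``faithfulness is obvious and fullness follows from the above''.
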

\begin{proof}
Let us first prove functoriality; we obviously have $\CE_\g^{\oone_U}=\id_{\CE_\g^U}$ and, if we are also given $g:V\s W\big[|g|\big]$, we have 
\begin{align}
\CE_\g^{gf}(u)\overset{\eqref{subeq:juxtaposition of intertwiners}}{=}&\sum_{i=1}^\infty\frac{1}{\i!}(-1)^{|x_{\alpha_1}\cdots x_{\alpha_{\i}}|_\rev+\sum_{l=1}^{\i}|x_{\alpha_l}|(l+|g|+|f|)}\theta^{\alpha_1}\cdots\theta^{\alpha_{\i}}\sum_{\j+k=i}\sum_{\sigma\in\S_{\j,\k}}\varepsilon_\sigma\label{eq:CE of bullet prod}\\
&\qquad(-1)^{(|g|-\j)\k+(|f|-\k)|x_{\alpha_{\sigma(1)}}\cdots x_{\alpha_{\sigma(\j)}}|}g^j_{\alpha_{\sigma(1)},\ldots,\alpha_{\sigma(\j)}}\left(f^k_{\alpha_{\sigma(j)},\ldots,\alpha_{\sigma(\i)}}(u)\right)\nn
\end{align}
and we compare this with 
\begin{align*}
\CE_\g^g\CE_\g^f(u)\overset{\eqref{eq:def of A-linearity}}{=}&\sum_{j,k=1}^\infty\frac{1}{\j!\k!}(-1)^{|g|(|x_{\alpha_1}\cdots x_{\alpha_{\k}}|+\k)+|x_{\alpha_1}\cdots x_{\alpha_{\k}}|_\rev+\sum_{l=k}^{\j+\k}|x_{\alpha_l}|(l+\k+|g|)+|x_{\alpha_k}\cdots x_{\alpha_{\j+\k}}|_\rev}\\&\qquad(-1)^{\sum_{l=1}^{\k}|x_{\alpha_l}|(l+|f|)}\theta^{\alpha_1}\cdots\theta^{\alpha_{\j+\k}}g^j_{\alpha_k,\ldots,\alpha_{\j+\k}}\left(f^k_{\alpha_1,\ldots,\alpha_{\k}}(u)\right)\\
\overset{\eqref{eq:CDGA commutativity}}{=}\,\,
&\sum_{j,k=1}^\infty\frac{1}{\j!\k!}(-1)^{|x_{\alpha_1}\cdots x_{\alpha_{\j+\k}}|_\rev+\sum_{l=1}^{\j+\k}|x_{\alpha_l}|(l+|g|+|f|)}\theta^{\alpha_1}\cdots\theta^{\alpha_{\j+\k}}\\
&\qquad(-1)^{(|g|-\j)\k+(|f|-\k)|x_{\alpha_1}\cdots x_{\alpha_{\j}}|}g^j_{\alpha_1,\ldots,\alpha_{\j}}\left(f^k_{\alpha_j,\ldots,\alpha_{\i}}(u)\right)\numberthis
\end{align*}
which recovers \eqref{eq:CE of bullet prod}. Now we show \textit{dg}-functoriality; we consider $\CE_\g^{\pb{\rho}{f}}(u)$ via \eqref{eq:arity component of [rho,f]}, 
\begin{align*}
\sum_{i=1}^\infty&\frac{1}{\i!}(-1)^{|x_{\alpha_1}\cdots x_{\alpha_{\i}}|_\rev+\sum_{l=1}^{\i}|x_{\alpha_l}|\left(\l+|f|\right)}\theta^{\alpha_1}\cdots\theta^{\alpha_{\i}}\sum_{\j+k=i}\Bigg(\sum_{\sigma\in\S_{\j,\k}}\varepsilon_\sigma(-1)^{j\k+(|f|-\k)|x_{\alpha_{\sigma(1)}}\cdots x_{\alpha_{\sigma(\j)}}|}\\
&\rho^j_{\alpha_{\sigma(1)},\ldots,\alpha_{\sigma(\j)}}\left(f^k_{\alpha_{\sigma(j)},\ldots,\alpha_{\sigma(\i)}}(u)\right)+\sum_{\sigma\in\S_{j,\k-1}}\varepsilon_\sigma(-1)^{\widetilde{|f|}j}\ell^\alpha_{\alpha_{\sigma(1)},\ldots,\alpha_{\sigma(j)}}f^k_{\alpha,\alpha_{\sigma(j+1)},\ldots,\alpha_{\sigma(\i)}}(u)
\\&-\sum_{\sigma\in\S_{\k,\j}}\varepsilon_\sigma(-1)^{\j\k+j(|f|+|x_{\alpha_{\sigma(1)}}\cdots x_{\alpha_{\sigma(\k)}}|)}f^k_{\alpha_{\sigma(1)},\ldots,\alpha_{\sigma(\k)}}\Big(\rho^j_{\alpha_{\sigma(k)},\ldots,\alpha_{\sigma(\i)}}(u)\Big)\Bigg)\numberthis\label{eq:CE is dg-functor LHS}
\end{align*}
and we compare this to $d\left(\CE_\g^f(u)\right)-(-1)^{|f|}\CE_\g^f(du)$,
\begin{align*}
&\sum_{k=2}^\infty\frac{1}{(\k-1)!}(-1)^{|x_\alpha x_{\alpha_1}\cdots x_{\alpha_{\k-1}}|_\rev+|x_\alpha|\widetilde{|f|}+\sum_{l=1}^{\k-1}|x_{\alpha_l}|(\l+|f|)}\delta(\theta^\alpha)\theta^{\alpha_1}\cdots\theta^{\alpha_{\k-1}}f^k_{\alpha,\alpha_1,\ldots,\alpha_{\k-1}}(u)\\
&+\sum_{k=1}^\infty\frac{1}{\k!}(-1)^{|x_{\alpha_1}\cdots x_{\alpha_{\k}}|_\rev+\sum_{l=1}^{\k}|x_{\alpha_l}|(l+|f|)+|\theta^{\alpha_1}\cdots\theta^{\alpha_{\k}}|}\theta^{\alpha_1}\cdots\theta^{\alpha_{\k}}d\left(f^k_{\alpha_1,\ldots,\alpha_{\k}}(u)\right)\\
&-(-1)^{|f|}\CE_\g^f\left(\sum_{j=1}^\infty\frac{1}{\j!}(-1)^{|x_{\alpha_1}\cdots x_{\alpha_{\j}}|_\rev+\sum_{l=1}^{\j}|x_{\alpha_l}|\l}\theta^{\alpha_1}\cdots\theta^{\alpha_{\j}}\rho^j_{\alpha_1,\ldots,\alpha_{\j}}(u)\right)\numberthis
\end{align*}
which becomes, upon using \eqref{eq:CE differential on generator}, \eqref{eq:d_V tens} and \eqref{eq:CE(frak g,f)(u):=...},
\begin{align*}
&\sum_{\begin{smallmatrix}k=2\\j=1
\end{smallmatrix}}^\infty\frac{1}{\j!(\k-1)!}(-1)^{(|x_{\alpha_1}\cdots x_{\alpha_{\k-1}}|+\widetilde{|f|})(|x_{\alpha_{\k}}\cdots x_{\alpha_{\j+\k}}|+j)+|x_{\alpha_1}\cdots x_{\alpha_{\k-1}}|_\rev+\sum_{l=1}^{\k-1}|x_{\alpha_l}|(\l+|f|)}\\&\qquad\qquad\qquad(-1)^{|x_{\alpha_{\k}}\cdots x_{\alpha_{\j+\k}}|_\rev+\sum_{l=\k}^{\j+\k}|x_{\alpha_l}|(l+k)}\ell^\alpha_{\alpha_{\k},\ldots,\alpha_{\j+\k}}\theta^{\alpha_{\k}}\cdots\theta^{\alpha_{\j+\k}}\theta^{\alpha_1}\cdots\theta^{\alpha_{\k-1}}f^k_{\alpha,\alpha_1,\ldots,\alpha_{\k-1}}(u)\\
&+\sum_{\begin{smallmatrix}k=1\\j=1
\end{smallmatrix}}^\infty\frac{1}{\j!\k!}(-1)^{|x_{\alpha_1}\cdots x_{\alpha_{\k}}|_\rev+\sum_{l=1}^{\k}|x_{\alpha_l}|(l+|f|)+|x_{\alpha_1}\cdots x_{\alpha_{\k}}|+\k+|x_{\alpha_k}\cdots x_{\alpha_{\j+\k}}|_\rev+\sum_{l=k}^{\j+\k}|x_{\alpha_l}|(l+k)}\\
&\qquad\qquad\qquad\quad\theta^{\alpha_1}\cdots\theta^{\alpha_{\k}}\theta^{\alpha_k}\cdots\theta^{\alpha_{\j+\k}}\rho^j_{\alpha_k,\ldots,\alpha_{\j+\k}}\left(f^k_{\alpha_1,\ldots,\alpha_{\k}}(u)\right)\\
&-\sum_{\begin{smallmatrix}j=1\\k=1
\end{smallmatrix}}^\infty\frac{1}{\j!\k!}(-1)^{|x_{\alpha_1}\cdots x_{\alpha_{\j}}|_\rev+\sum_{l=1}^{\j}|x_{\alpha_l}|\l+|f|(1+|\theta^{\alpha_1}\cdots\theta^{\alpha_{\j}}|)+|x_{\alpha_j}\cdots x_{\alpha_{\j+\k}}|_\rev+\sum_{l=j}^{\j+\k}|x_{\alpha_l}|(\l+j+|f|)}\\
&\qquad\qquad\qquad\quad\theta^{\alpha_1}\cdots\theta^{\alpha_{\j}}\theta^{\alpha_j}\cdots\theta^{\alpha_{\j+\k}}f^k_{\alpha_j,\ldots,\alpha_{\j+\k}}\left(\rho^j_{\alpha_1,\ldots,\alpha_{\j}}(u)\right)\numberthis\label{eq:CE is dg-functor RHS}
\end{align*}
but we recover \eqref{eq:CE is dg-functor LHS} by reindexing \eqref{eq:CE is dg-functor RHS} and shuffling terms as always. Essential surjectivity is proven by Proposition \ref{propo:CE of rep}, faithfulness is obvious and fullness follows from the above. Let us now prove that the strict monoidal structure is strictly preserved, on objects we have
\begin{align*}
d(u\otimes_{\CE_\g}v)=\,&(du)\otimes_{\CE_\g}v+(-1)^{|u|}u\otimes_{\CE_\g}dv\\=\,&\sum_{i=1}^\infty\frac{1}{\i!}(-1)^{|x_{\alpha_1}\cdots x_{\alpha_{\i}}|_\rev+\sum_{l=1}^{\i}|x_{\alpha_l}|\l}\theta^{\alpha_1}\cdots\theta^{\alpha_{\i}}\\&\qquad\Big[\rho^i_{\alpha_1,\ldots,\alpha_{\i}}(u)\otimes_{\CE_\g}v+(-1)^{|u|(1+|\theta^{\alpha_1}\cdots\theta^{\alpha_{\i}}|)}u\otimes_{\CE_\g}\rho^i_{\alpha_1,\ldots,\alpha_{\i}}(v)\Big]\\=\,&\sum_{i=1}^\infty\frac{1}{\i!}(-1)^{|x_{\alpha_1}\cdots x_{\alpha_{\i}}|_\rev+\sum_{l=1}^{\i}|x_{\alpha_l}|\l}\theta^{\alpha_1}\cdots\theta^{\alpha_{\i}}\rho^i_{\alpha_1,\ldots,\alpha_{\i}}(u\otimes v)\\=\,&d(u\otimes v)\numberthis
\end{align*}
thus $\CE_\g^U\otimes_{\CE_\g}\CE_\g^V=\CE_\g^{U\odot V}$. On morphisms $f:U\s U'\big[|f|\big]$ and $g:V\s V'\big[|g|\big]$, we have
\begin{align*}
\CE_\g^{f\odot g}(u\otimes v)=&\sum_{i=1}^\infty\frac{1}{\i!}(-1)^{|x_{\alpha_1}\cdots x_{\alpha_{\i}}|_\rev+\sum_{l=1}^{\i}|x_{\alpha_l}|(l+|f|+|g|)}\theta^{\alpha_1}\cdots\theta^{\alpha_{\i}}\sum_{\j+k=i}\sum_{\sigma\in\S_{\j,\k}}\varepsilon_\sigma(-1)^{(|f|-\j)\k}\\&(-1)^{|u||x_{\alpha_{\sigma(j)}}\cdot\cdot x_{\alpha_{\sigma(\i)}}|+(|g|-\k)|x_{\alpha_{\sigma(1)}}\cdot\cdot x_{\alpha_{\sigma(\j)}}u|}f^j_{\alpha_{\sigma(1)},..,\alpha_{\sigma(\j)}}(u)\otimes g^k_{\alpha_{\sigma(j)},..,\alpha_{\sigma(\i)}}(v)\numberthis\label{eq:CE(frak g,f odot g):=...}
\end{align*}
and we compare this with the following expression for $\CE_\g^f(u)\otimes_{\CE_\g}(-1)^{|g||u|}\CE_\g^g(v)$,
\begin{align*}
\sum_{\begin{smallmatrix}j=1\\k=1
\end{smallmatrix}}^\infty\frac{1}{\j!\k!}(-1)&^{|x_{\alpha_1}\cdots x_{\alpha_{\j}}|_\rev+\sum_{l=1}^{\j}|x_{\alpha_l}|(l+|f|)+|x_{\alpha_j}\cdots x_{\alpha_{\j+\k}}|_\rev+\sum_{l=j}^{\j+\k}|x_{\alpha_l}|(l+\j+|g|)+|g||u|}\\&\theta^{\alpha_1}\cdots\theta^{\alpha_{\j}}f^j_{\alpha_1,\ldots,\alpha_{\j}}(u)\otimes_{\CE_\g}\theta^{\alpha_j}\cdots\theta^{\alpha_{\j+\k}}g^k_{\alpha_j,\ldots,\alpha_{\j+\k}}(v)\numberthis
\end{align*}
which recovers \eqref{eq:CE(frak g,f odot g):=...} upon shuffling terms. Finally, we obviously have preservation of monoidal units $\CE_\g^\bbK=\CE_\g$ and preservation of the symmetric braiding $\CE_\g^{\gamma_{U,V}}=s_{\CE_\g^U,\CE_\g^V}$.
\end{proof}

\end{document}